\documentclass[10pt,twoside,fleqn]{article}
\pdfinfoomitdate1
\pdfsuppressptexinfo-1
\pdftrailerid{}
\usepackage[utf8]{inputenc}
\usepackage[T1]{fontenc}
\usepackage{amsmath,amssymb,dsfont}
\usepackage{mathtools}
\numberwithin{equation}{section}
\usepackage{microtype}
\usepackage{xcolor}
\usepackage[hyperref,amsmath,thmmarks]{ntheorem}
\usepackage{aliascnt}
\usepackage[a4paper,centering,bindingoffset=0cm,marginpar=2cm,margin=2.5cm]{geometry}
\usepackage[pagestyles]{titlesec}
\usepackage[font=footnotesize,format=plain,labelfont=sc,textfont=sl,width=0.75\textwidth,labelsep=period]{caption}
\usepackage{bm}
\usepackage{enumitem} % For customizing enumerate
\setlist[itemize]{leftmargin=*,topsep=0pt}

%'
%' biblatex
%'
\usepackage[backend=biber,maxnames=10,backref=true,hyperref=true,giveninits=true,safeinputenc]{biblatex}
    \addbibresource{journals.bib}
    \addbibresource{articles.bib}
    \addbibresource{books.bib}
    \addbibresource{data.bib}
    \addbibresource{incollection.bib}
    \addbibresource{inproceedings.bib}
    \addbibresource{proceedings.bib}
    \addbibresource{preprints.bib}
    \addbibresource{theses.bib}
    \addbibresource{infmath.bib}
    \addbibresource{infmath_books.bib}
    \addbibresource{infmath_report.bib}
    \addbibresource{infmath_theses.bib}

\DefineBibliographyStrings{english}{%
	backrefpage = {cited on page},
	backrefpages = {cited on pages},
}

%'
%' writes the title always in quotes.
%'
\DeclareFieldFormat[report]{title}{``#1''}
\DeclareFieldFormat[book]{title}{``#1''}
\AtEveryBibitem{\clearfield{url}}
\AtEveryBibitem{\clearfield{note}}

\titleformat{\section}[block]{\large\sc\filcenter}{\thesection.}{0.5ex}{}[]
%\titleformat{\subsection}[block]{\bf}{\thesubsection.}{0.5ex}{}[]
\titleformat{\subsection}[runin]{\bf}{\thesubsection.}{0.5ex}{}[.]

%'
%' hyperref
%'
\usepackage[pdftex,colorlinks=true,linkcolor=blue,citecolor=green,urlcolor=blue,bookmarks=true,bookmarksnumbered=true]{hyperref}
\hypersetup
{
    bookmarksnumbered,
    breaklinks,
    pdfborderstyle=,
    colorlinks,
    citecolor=[rgb]{0,.65,0},
    linkcolor=[rgb]{0,0,.5},
    urlcolor=[rgb]{0,0,.5},
    pdfauthor={Authors},%ADAPT
    pdfsubject={Subject},%ADAPT
    pdftitle={Title},%ADAPT
    pdfkeywords={Keywords}%ADAPT
}

%'
%' defines the pagestyle
%'
\newpagestyle{headers}
{
	\headrule
	\sethead[\footnotesize\thepage][\footnotesize\sc P.~Elbau and D.~Schmutz][]{}{\footnotesize\sc Uniqueness of angular velocity reconstructions}{\footnotesize\thepage}
	\setfoot{}{}{}
}
\pagestyle{headers}
%'
%' custom references
%'

%'
%' default paragraph layout.
%'
\postdisplaypenalty= 1000
\widowpenalty = 1000
\clubpenalty = 1000
\displaywidowpenalty = 1000
\setlength{\parindent}{0pt}
\setlength{\parskip}{1ex}

%'
%' common mathematical commands.
%'
\theoremseparator{.}
\newtheorem{lemma}{Lemma}[section]

\newaliascnt{proposition}{lemma}
\newtheorem{proposition}[proposition]{Proposition}
\aliascntresetthe{proposition}

\newaliascnt{corollary}{lemma}
\newtheorem{corollary}[corollary]{Corollary}
\aliascntresetthe{corollary}

\newaliascnt{theorem}{lemma}
\newtheorem{theorem}[theorem]{Theorem}
\aliascntresetthe{theorem}

\theorembodyfont{\normalfont}
\newaliascnt{definition}{lemma}
\newtheorem{definition}[definition]{Definition}
\aliascntresetthe{definition}

\newaliascnt{assumption}{lemma}

\aliascntresetthe{assumption}

\newaliascnt{problem}{lemma}
\newtheorem{problem}[problem]{Problem}
\aliascntresetthe{problem}

\newaliascnt{example}{lemma}
\newtheorem{example}[example]{Example}
\aliascntresetthe{example}

\theoremseparator{:}
\theoremheaderfont{\normalfont\itshape}
\newaliascnt{remark}{lemma}
\newtheorem{remark}[remark]{Remark}
\aliascntresetthe{remark}

\theoremstyle{nonumberplain}
\theoremsymbol{\ensuremath{\square}}
\newtheorem{proof}{Proof}

\newcommand{\N}{\mathds{N}}

\newcommand{\R}{\mathds{R}}
\newcommand{\C}{\mathds{C}}

\newcommand{\I}{\mathds{I}}

% real part

\let\Re=\undefined
\DeclareMathOperator{\Re}{Re}
% imaginary part

\let\Im=\undefined
\DeclareMathOperator{\Im}{Im}
% support
\DeclareMathOperator{\supp}{supp}

% sign
\DeclareMathOperator{\sgn}{sgn}
% absolute Value
%\newcommand{\abs}[1]{\left|#1\right|}
% norm
%\newcommand{\norm}[1]{\left\|#1\right\|}
% set
%\newcommand{\set}[1]{\left\{#1\right\}}
% inner Product
%\newcommand{\inner}[2]{\left<#1,#2\right>}
% euler number
\newcommand{\e}{\mathrm e}
% imaginary unit (command \i is stored in \ii)
\let\ii\i
\renewcommand{\i}{\mathrm i}
% derivative
\renewcommand{\d}{\,\mathrm d}
% Fourier transform

% orthogonal projection

% span
\DeclareMathOperator{\spn}{span}

\def\curl{\operatorname{curl}}
\def\div{\operatorname{div}}
\def\grad{\operatorname{grad}}
\def\diag{\operatorname{diag}}
\def\D{\mathrm D}
\newcommand{\B}{\mathrm B}
\newcommand{\SO}{\mathrm{SO}}

\DeclarePairedDelimiter{\abs}{\lvert}{\rvert}
\DeclarePairedDelimiter{\norm}{\lVert}{\rVert}
\DeclarePairedDelimiterX\inner[2]{\langle}{\rangle}{#1,#2}
\DeclarePairedDelimiterX\set[2]{\lbrace}{\rbrace}{#1\;\delimsize\vert\;\mathopen{}#2}

%'
%' title and author
%'
\title{Uniqueness of Angular Velocity Reconstruction\\in Parallel-Beam and Diffraction Tomography}
\author{Peter Elbau\\{\footnotesize\href{mailto:peter.elbau@univie.ac.at}{peter.elbau@univie.ac.at}}
\and Denise Schmutz\\{\footnotesize\href{mailto:denise.schmutz@univie.ac.at}{denise.schmutz@univie.ac.at}}}
\date{}

\begin{document}

%'
%' titlepage
%'
\maketitle
\thispagestyle{empty}
\begin{center}
\newlength{\lengthParbox}
\settowidth{\lengthParbox}{\footnotesize Oskar-Morgenstern-Platz 1}
\parbox[t]{\lengthParbox}{\footnotesize
Faculty of Mathematics\\
University of Vienna\\
Oskar-Morgenstern-Platz 1\\
A-1090 Vienna, Austria}
\end{center}

\bigskip

\begin{abstract}
	This work addresses the problem of uniquely determining a rotational motion from continuous time-dependent measurements within the frameworks of parallel-beam and diffraction tomography. The motivation stems from the challenge of imaging trapped biological samples manipulated and rotated using optical or acoustic tweezers. We analyze the conditions under which the rotation of the unknown sample can be uniquely recovered using the infinitesimal common line and circle method, respectively. We provide explicit criteria for the sample’s structure and the induced motion that guarantee unique reconstruction of all rotation parameters. Moreover, we demonstrate that the set of objects for which uniqueness fails is nowhere dense.
\end{abstract}

\section{Introduction}
In tomographic imaging methods, such as computed tomography, it is typical to perform measurements of the sample from different directions to gather enough information for a full reconstruction of the object. This idea relies, however, on the crucial assumption that the sample remains completely unchanged during these repeated recordings, which is in practice not always easy to guarantee.

This led in \cite{SchmLou02,SchmLouWolVau02} to the development of the field of dynamic inverse problems which attempts to compensate for such unavoidable motions during the measurements. Even in the optimal case where the deformations of the object are fully known, it is not always possible to retain a perfect reconstruction, which is described in the articles \cite{HahQui16,HahGarQui21}. In most cases, however, the deformations are unknown so that we face the additional complication of having to determine the transformations from the data, see \cite{LuMac02,KatSilZam11}, for example. The considered deformations typically still need to be restricted to a certain class of mappings to be able to achieve a reconstruction.
In this article, we want to analyze one of the probably simplest classes of such problems, namely those where we only allow for global rotations of the object.

A very famous example for this setting is the single-particle analysis in cryogenic electron microscopy, which took its beginning with the seminal paper \cite{AdrDubLepMcd84}. The data hereby consists of a collection of transmission electron microscope images, which can be seen as values of the $X$-ray transform in the incident direction of the electron beam of the absorption coefficient of (multiple copies of) one object in various unknown orientation states. The main observation that allowed the reconstruction of the rotational state of the object was that there exists for every pair of two-dimensional Fourier transforms of the recorded images in each of them one straight line such that the values along these lines are equal. This led to the so-called common line method which had its origins in the articles \cite{CroDerKlu70,Hee87,Gon88a}.

However, this method cannot reproduce the rotational state in all cases, since it is for example clearly impossible to detect the orientation of a spherically symmetrical object, so that the question arises on how to characterize those objects for which a reconstruction of the orientation states is possible. It was shown in the article \cite{Lam08}, by using the moment method for the reconstruction, as introduced in \cite{Gon88a}, that this is possible for a generic object, meaning that the set of samples whose orientation cannot be detected is nowhere dense.
Unlike the common line method, the moment method is also applicable to parallel-beam tomography in two dimensions and for this case, there also exist the results in \cite{BasBre00,LamYli07}, which use the Helgason–Ludwig consistency conditions for the Radon transform to characterize such objects in terms of their moments. Another result in this direction which takes the wave properties of the imaging electrons into account has been given in \cite{KurZic21_report}. 

While this is intrinsically a problem with a discrete set of transformation states, we want to consider here the case where the object is continuously rotating during the recording of tomographic data. As a concrete experimental example, we have the measuring of microscopy data of trapped particles in mind. In this imaging technique a biological sample is illuminated while being held and rotated by optical or acoustical forces, as described in the paper \cite{LovPreThaRit21}. Since the forces acting on the sample depend on its unknown internal properties, its rotation can hereby not be perfectly controlled and we therefore would like to reconstruct it from the measurements along with the refractive index of the sample. In previous works, we examined two approximating models for the light propagation in this problem, each applicable depending on the available measurements and properties of the sample, and developed methods for the reconstruction of the motion which try to make use of the smooth motion: In \cite{ElbRitSchSchm20}, we modeled it as a parallel-beam tomography experiment (so that we are in the same setting as in the example from cryogenic electron microscopy) and derived the infinitesimal common line method; and in \cite{QueElbSchSte24}, we employed a diffraction tomography model and devised the infinitesimal common circle method to obtain the desired reconstruction of the rotational motion. (We will recall these methods briefly in \autoref{section:dt} and \autoref{section:pb}.)

In this article, we want to study now under which precise conditions these two reconstruction algorithms for the parallel-beam and the diffraction tomography model are guaranteed to recover the correct rotational motion and show that the necessary assumptions are indeed fulfilled for generic objects. In this case, the motion reconstruction can be used as a preprocessing step for the reconstruction of the refraction index of the object, since, once the motion is determined, the problem reduces to the standard inverse problem for parallel-beam and diffraction tomography, respectively, for which we refer to the textbooks \cite{KakSla01,NatWub01}.

Although the ideas to show the reconstructability of the motion are similar in both models, the reconstruction algorithm and the necessary criteria the objects should fulfill for their motions to be recoverable are still quite different so that the article is essentially split into two parts:
\begin{itemize}
	\item \autoref{section:dt} focuses on diffraction tomography under the Born approximation. In \autoref{def:dt_symmetry}, we introduce the concepts of DT-symmetry and DT-asymmetry and then prove in \autoref{thm:dt_uniqueness} that the rotational motion can be uniquely reconstructed for DT-asymmetric objects with the infinitesimal common circle method. Furthermore, we show in \autoref{thm:dt_S_nowheredense} that the set of DT-symmetric objects is nowhere dense so that motion reconstruction is feasible for generic samples.
	\item \autoref{section:pb} addresses the model of parallel-beam tomography. Unlike in the case of diffraction tomography, certain rotational motions must be excluded, as the method cannot be applied to them: We describe these degenerate motions in \autoref{def:pb_degmotion}. In \autoref{def:pb-symmetry}, we define PB-symmetry and PB-asymmetry and demonstrate in \autoref{thm:pb_uniqueness} that the rotational motion can be uniquely reconstructed for PB-asymmetric objects via the infinitesimal common line method. Finally, we show in \autoref{thm:pb_S_nowheredense} that the set of PB-symmetric objects is nowhere dense, indicating that rotation reconstruction is achievable for generic samples as in the case of the diffraction tomography model.
\end{itemize}

\subsection{Experimental setup}\label{subsection:experimental-setup}

To motivate the two main problems considered in this article, namely \autoref{pr:dt} and \autoref{pr:pb}, we give here a simplified mathematical model of the measurement setup described in the article~\cite{LovPreThaRit21} which will lead us to exactly these sorts of reconstruction problems.

We illuminate in this setting a sample, which we can rotate freely around the origin, by a light beam and detect the scattered light on a plane behind the object. We describe the actual orientation of the sample by a rotation matrix $R\in \SO(3)$ in such a way that the object is obtained by rotating it from a certain reference state according to $R^{-1}$.

We employ a classical scattering model, to be found in the textbook \cite[Chapter XIII]{BorWol99}, for example, where we describe the light as an electromagnetic wave. The incident beam is then given by an electric field $E^{(0)}\colon\R\times\R^3\to\R^3$ and a magnetic field $B^{(0)}\colon\R\times\R^3\to\R^3$ which solve Maxwell's equations
\begin{equation}\label{eq:Maxwell_vacuum}
\begin{aligned}
\frac1c\partial_\tau E^{(0)}(\tau,x) &= \curl B^{(0)}(\tau,x),&\div E^{(0)}(\tau,x)&=0, \\
\frac1c\partial_\tau B^{(0)}(\tau,x) &= -\curl E^{(0)}(\tau,x),&\div B^{(0)}(\tau,x)&=0
\end{aligned}
\end{equation}
for all $(\tau,x)\in\R\times\R^3$. The parameter $c$ hereby denotes the speed of light in the vacuum and the vector operators are only acting on the spatial variable $x$.

Assuming that we have no free charges, the resulting electric field $E_R\colon\R\times\R^3\to\R^3$, the displacement field $D_R\colon\R\times\R^3\to\R^3$, the magnetic field $B_R\colon\R\times\R^3\to\R^3$, and the magnetizing field $H_R\colon\R\times\R^3\to\R^3$ in the presence of the sample are then a solution of Maxwell's macroscopic equations
\begin{equation}\label{eq:Maxwell}
\begin{aligned}
\frac1c\partial_\tau D_R(\tau,x) &= \curl H_R(\tau,x),&\div D_R(\tau,x)&=0, \\
\frac1c\partial_\tau B_R(\tau,x) &= -\curl E_R(\tau,x),&\div B_R(\tau,x)&=0
\end{aligned}
\end{equation}
for all $(\tau,x)\in\R\times\R^3$, where we take as initial conditions that
\begin{equation}\label{eq:initial_condition}
E_R(\tau,x)=E^{(0)}(\tau,x)\text{ and }B_R(\tau,x)=B^{(0)}(\tau,x)\text{ for all }(\tau,x)\in(-\infty,0)\times\R^3.
\end{equation}
To ensure that these initial conditions are compatible with the equation system in \autoref{eq:Maxwell}, we silently assume that the fields $E^{(0)}(\tau,\cdot)$ and $B^{(0)}(\tau,\cdot)$ of the incident beam are for $\tau<0$ only supported outside the medium which shall be contained in the ball $\B_{r_{\mathrm s}}^3\subset\R^3$ around the origin with radius $r_{\mathrm s}>0$.

The interaction with the sample enters in this description via relations between the electric field $E_R$ and the displacement field $D_R$ as well as between the magnetic field $B_R$ and the magnetizing field $H_R$. If we assume that we are dealing with a non-magnetic medium, there will be no contribution of the sample to the magnetic fields and we get that $B_R=H_R$. Assuming further that it is a linear dielectric medium, we get that the induced polarization in the sample depends linearly on the strength of the electric field so that we have a relation of the form
\begin{equation}\label{eq:dielectric}
D_R(\tau,x) = E_R(\tau,x)+\int_0^\infty\chi_R(\tilde\tau,x)E_R(\tau-\tilde\tau,x)\d\tilde\tau.
\end{equation}
Here, the function $\chi_R\colon\R\times\R^3\to\R$ is the so-called electric susceptibility of the sample when it is rotated according to the rotation matrix $R^{-1}$. Defining $\chi\colon\R\times\R^3\to\R$ as the electric susceptibility of the medium in the reference state, this means that $\chi_R$ is given by $\chi_R(\tau,x)=\chi(\tau,Rx)$. Since the object shall be contained in $\B_{r_{\mathrm s}}^3$, we have that $\supp\chi(\tau,\cdot)\subset \B_{r_{\mathrm s}}^3$ for every $\tau\in[0,\infty)$.
And to be able to extend the integral in \autoref{eq:dielectric} to all of $\R$, we conveniently set $\chi(\tau,x)\coloneqq0$ for all $(\tau,x)\in(-\infty,0)\times\R^3$.

We want to perform a Fourier transform with respect to the time variable $\tau$ and denote by $\check g\in L^2(\R)$ the temporal Fourier transform of a function $g\in L^2(\R)$ following the convention
\[ \check g(\omega) \coloneqq \int_{-\infty}^\infty g(\tau)\e^{\i\omega\tau}\d\omega\text{ if }g\in C^\infty_{\mathrm c}(\R). \]
We thus switch to the temporal Fourier transform $\check E_R(\cdot,x)$ of the electric field $E_R(\cdot,x)$ at every position $x\in\R^3$ and combine Maxwell's macroscopic equations from \autoref{eq:Maxwell} with $B_R=H_R$ and \autoref{eq:dielectric} to the vector Helmholtz equations
\[ -\curl\curl\check E_R(\omega,x)+\frac{\omega^2}{c^2}(1+\check\chi_R(\omega,x))\check E_R(\omega,x) = 0\text{ for all }x\in\R^3 \]
for the functions $\check E_R(\omega,\cdot)$ for every frequency $\omega\in\R$.

By taking the divergence of these equations (or directly from $\div D_R=0$), we find the relation $\div\check E_R=-\div(\check\chi_R\check E_R)$ so that we can reduce each of them with the help of the vector identity $\curl\curl\check E_R=\grad\div\check E_R-\Delta\check E_R$ to a Helmholtz equation for each component of $\check E_R(\omega,\cdot)$:
\[ \Delta\check E_R(\omega,x)+\frac{\omega^2}{c^2}\check E_R(\omega,x) = -\left(\frac{\omega^2}{c^2}+\grad\div\right)(\check\chi_R\check E_R)(\omega,x)\text{ for all }\omega\in\R,\;x\in\R^3, \]
where we consider the right hand side as an unknown inhomogeneity. Since the Fourier transform $\check E^{(0)}$ is according to \autoref{eq:Maxwell_vacuum} a solution of the homogeneous equation, we obtain with the fundamental solution
\[ G(\tfrac\omega c,x)\coloneqq\frac{\e^{\i\frac\omega c\norm x}}{4\pi\norm x} \]
of the Helmholtz equation the integral equation
\[ \check E_R(\omega,x) = \check E^{(0)}(\omega,x)+\left(\frac{\omega^2}{c^2}+\grad\div\right)\int_{\R^3}G(\tfrac\omega c,x-y)\check\chi_R(\omega,y)\check E_R(\omega,y)\d y, \]
where the initial condition from \autoref{eq:initial_condition} is included by enforcing that $\omega\mapsto\check E_R(\omega,x)-\check E^{(0)}(\omega,x)$ can be holomorphically extended to a square integrable function on the upper half complex plane, which is ensured by the choice of the fundamental solution.

We consider a fixed frequency $\omega_0\in(0,\infty)$ for which the medium is only weakly scattering, meaning that $\abs{\check\chi_R(\omega_0,\cdot)}$ is sufficiently small. Under this assumption, we may approximate the solution by replacing $\check E_R(\omega_0,\cdot)$ in the integrand by $\check E^{(0)}(\omega_0,\cdot)$. The resulting approximate field $E_R^{(1)}$, given by
\[ \check E_R^{(1)}(\omega_0,x) \coloneqq \check E^{(0)}(\omega_0,x)+\left(\frac{\omega_0^2}{c^2}+\grad\div\right)\int_{\R^3}G(\tfrac{\omega_0}c,x-y)\check\chi_R(\omega_0,y)\check E^{(0)}(\omega_0,y)\d y, \]
is called the Born approximation.

We choose for the initial beam a plane wave with linear polarization along $e_1$ moving into the direction $e_3$, where $(e_j)_{j=1}^3$ denotes the standard basis in $\R^3$, that is, we write
\[ E^{(0)}(\tau,x) = u^{(0)}(\tau-\tfrac{x_3}c)e_1 \]
with some compactly supported function $u^{(0)}\colon\R\to\R$ which shall fulfill that $\supp u^{(0)}\subset(\frac{r_{\mathrm s}}c,\infty)$ (so that $E^{(0)}(\tau,x)=0$ for all $x\in \B_{r_{\mathrm s}}^3$ if $\tau<0$). Moreover, we want it to be essentially monochromatic with the frequency $\omega_0$, meaning that the support of $\check u^{(0)}$ should be localized around $\omega_0$ and $-\omega_0$.

The temporal Fourier transform of $E^{(0)}$ is given by $\check E^{(0)}(\omega,x) = \check u^{(0)}(\omega)\e^{\i\frac\omega cx_3}e_1$, and we thus arrive at the expression
\begin{equation}\label{eq:electric_field_Born}
\check E_{R,1}^{(1)}(\omega_0,x)-\check E_1^{(0)}(\omega_0,x) = \check u^{(0)}(\omega_0)\left(\frac{\omega_0^2}{c^2}+\partial_{x_1x_1}\right)\int_{\R^3}G(\tfrac{\omega_0}c,x-y)\check\chi_R(\omega_0,y)\e^{\i\frac{\omega_0}cy_3}\d y
\end{equation}
for the first component of the Born approximation of the electric field at the main frequency $\omega_0$ of the incident field. Similar expressions can be obtained for the other components, but we will assume that only the polarization in the direction of the incident field is measured and therefore only focus on this term.

To get measurements from different directions, we let the object rotate slowly and perform at different rotational states such an illumination. Idealized, this means that we obtain for a certain function $R\colon[0,T]\to \SO(3)$, which describes the rotation of the object, at every time step $t\in[0,T]$ a field component of the form $\check E_{R(t),1}^{(1)}(\omega_0,x)$.

We consider now two different measurement setups for recording this quantity.

\begin{description}
\item[Interferometric measurements:]
Using a polarization sensitive interferometric setup, it is indeed possible to obtain the values of the first component $\check E_{R(t),1}$ of the electric field on every point in a detector plane $\mathcal D\coloneqq\set{x\in\R^3}{x_3=r_{\mathrm m}}$ at some position $r_{\mathrm m}>r_{\mathrm s}$ outside the object. Assuming that the Born approximation at the considered frequency $\omega_0>0$ is close to the produced electric field, we can therefore acquire the diffraction tomography data
\[ m^{\mathrm{DT}}\colon[0,T]\times\R^2\to\C,\;m^{\mathrm{DT}}(t,x_1,x_2)\coloneqq(\check E_{R(t),1}^{(1)}-\check E_1^{(0)})(\omega_0,x_1,x_2,r_{\mathrm m}). \]

Since the formula in \autoref{eq:electric_field_Born} consists essentially of a convolution with $G$, we try to simplify it by performing a Fourier transform with respect to the variables $x_1$ and $x_2$. In the spatial domain, we use for the $d$-dimensional Fourier transform $\hat g$ of a function $g\in L^2(\R^d)$ the convention
\[ \hat g(k)\coloneqq(2\pi)^{-\frac{d}{2}}\int_{\R^d}g(x)\e^{-\i\inner{x}{k}}\d x\text{ if }g\in C_{\mathrm c}^\infty(\R^n;\C). \]

From our expression for $\check E_{R(t),1}^{(1)}$ in \autoref{eq:electric_field_Born}, we then get with the Weyl expansion for the fundamental solution $G$, which was derived in \cite{Wey19}, the relation
\begin{equation}\label{eq:dt_fourier-diffr}
\hat m^{\mathrm{DT}}(t,k) = \check u^{(0)}(k_0c)\left(k_0^2-k_1^2\right)\sqrt{\frac{\pi}{2}}\frac{\i \e^{\i\sqrt{k_0^2-\norm{k}^2}r_{\mathrm m}}}{\sqrt{k_0^2-\norm{k}^2}}\hat f^{\mathrm{DT}}\left(R(t) \begin{pmatrix} k\\h(\norm{k}) \end{pmatrix}\right)
\end{equation}
between the two-dimensional Fourier transform $\hat m^{\mathrm{DT}}(t,\cdot)$ of $m^{\mathrm{DT}}(t,\cdot)$ and the three-dimensional Fourier transform $\hat f^{\mathrm{DT}}$ of $f^{\mathrm{DT}}$, where we have set $k_0\coloneqq\frac{\omega_0}c$, defined the scattering potential $f^{\mathrm{DT}}(y)\coloneqq\check\chi(k_0c,y)$, and used the abbreviation
\begin{equation}\label{eq:dt_def_h}
h\colon[-k_0,k_0]\to[-k_0,0],\;h(\mu)\coloneqq\sqrt{k_0^2-\mu^2}-k_0.
\end{equation}
At least in the scalar case where the vector nature of the electric fields is neglected, this is known as the Fourier diffraction theorem and a derivation can be found in \cite[Section~5.2.4]{NatWub01}, for example.

\item[Intensity measurements:]
We also want to consider the case where only the intensities $\norm{\check E_{R(t)}}^2$ are measurable, as this is often more practical in experimental settings. In this case, the measured data is of the form
\begin{align*}
\norm{\check E_{R(t)}(\omega_0,x)}^2 &= \norm{\check E^{(0)}(\omega_0,x)}^2+2\Re\inner{\check E^{(0)}(\omega_0,x)}{\check E_{R(t)}(\omega_0,x)-\check E^{(0)}(\omega_0,x)} \\
&\qquad+\norm{\check E_{R(t)}(\omega_0,x)-\check E^{(0)}(\omega_0,x)}^2
\end{align*}
for all $x$ in the detector plane $\mathcal D$, where $\inner\cdot\cdot$ denotes the standard inner product in $\C^3$ with the convention that it is linear in the first and antilinear in the second argument and $\norm\cdot$ is the corresponding Euclidean norm in $\C^3$.

Following our assumption that the susceptibility $\abs{\check\chi(\omega_0,\cdot)}$ is small, we approximate $\check E_{R(t)}$ again by the Born approximation $\check E_{R(t)}^{(1)}$ and neglect the term $\norm{\check E_{R(t)}-\check E^{(0)}}^2$ as it is of second order in $\abs{\check\chi(\omega_0,\cdot)}$. Since the first term is known, we therefore effectively obtain, using again our expression form \autoref{eq:electric_field_Born}, the quantity
\begin{equation}\label{eq:raw_pb_measurements}
\begin{split}
&\Re\inner{\check E^{(0)}(\omega_0,x)}{(\check E_{R(t)}^{(1)}-\check E^{(0)})(\omega_0,x)} \\
&\qquad=\abs{\check u^{(0)}(\omega_0)}^2\left(\frac{\omega_0^2}{c^2}+\partial_{x_1x_1}\right)\Re\left(\e^{-\i k_0x_3}\int_{\R^3}G(\tfrac{\omega_0}c,x-y)\check\chi_R(\omega_0,y)\e^{\i\frac{\omega_0}cy_3}\d y\right)
\end{split}
\end{equation}
at all points $x\in\mathcal D$ and we can directly extract from this the real part of the integral.

To further simplify the problem in this case, we assume that we illuminate with a sufficiently high frequency $\omega_0>0$ so that we can approximate this data with its asymptotic limit as $\omega_0\to\infty$, see \cite[Section 3.3]{NatWub01}, for example.

We switch again to the wave number $k_0\coloneqq\frac{\omega_0}c$ and get, according to the stationary phase method, to be found in \cite[Theorem 7.7.6]{Hoe03}, for an arbitrary function $g\in C^\infty_{\mathrm c}(\R^3)$ with $\supp g\subset \B_{r_{\mathrm s}}^3$ for every value $y_3\in\R$ and every point $x\in\mathcal D$ in the limit $k_0\to\infty$ the asymptotic behavior
\begin{align*}
\int_{\R^2}G(k_0,x-y)g(y)\e^{\i k_0y_3}\d(y_1,y_2) &= \int_{\R^2}\frac{\e^{\i k_0(\norm{x-y}+y_3)}}{4\pi\norm{x-y}}g(y)\d(y_1,y_2) \\
&\simeq \frac{\i\e^{\i k_0(\abs{x_3-y_3}+y_3)}}{2k_0}g(x_1,x_2,y_3) = \frac{\i\e^{\i k_0r_{\mathrm m}}}{2k_0}g(x_1,x_2,y_3),
\end{align*}
since the only critical point of the function $\psi(y_1,y_2)\coloneqq\norm{x-y}+y_3$ is $(x_1,x_2)$ and the determinant of its Hessian matrix at that point is given by $\det(D^2\psi(x_1,x_2))^{-\frac12} = \abs{x_3-y_3}$.

Under the pretense that this is a good approximation for our expression in \autoref{eq:raw_pb_measurements}, we therefore consider
\begin{equation}\label{eq:pb_measurements}
m^{\mathrm{PB}}\colon[0,T]\times \R^2\to \R,\;m^{\mathrm{PB}}(t,x_1,x_2)\coloneqq\int_\R f^{\mathrm{PB}}(R(t)x) \d x_3.
\end{equation}
as the parallel-beam data for the imaging function $f^{\mathrm{PB}}\colon\R^3\to\R$, $f^{\mathrm{PB}}(y)\coloneqq\Im(\check\chi(k_0c,y))$. The two-dimensional spatial Fourier transform $\hat m^{\mathrm{PB}}(t,\cdot)$ of $m^{\mathrm{PB}}(t,\cdot)$ is then directly related to the three-dimensional Fourier transform $\hat f^{\mathrm{PB}}$ of $f^{\mathrm{PB}}$ via
\begin{equation}\label{eq:pb_measurements_Fourier}
\hat m^{\mathrm{PB}}(t,k) = \sqrt{2\pi}\hat f^{\mathrm{PB}}\left(R(t) \begin{pmatrix}k\\0\end{pmatrix}\right)\text{ for all }t\in[0,T],\;k\in\R^2,
\end{equation}
which is known as the Fourier slice theorem.

We remark that, although the physics behind electron microscopy is quite different, the measurements in single particle cryogenic electron microscopy have the same structure as we got in \autoref{eq:pb_measurements} with the only major difference that they are not recorded along a continuous rotational motion $R$, but only available at many discrete rotational states.
\end{description}

\subsection{Reconstruction problem}
Our aim is now to reconstruct the function $f^{\mathrm{DT}}$ or $f^{\mathrm{PB}}$, which describe the optical properties of the sample, from the measurement $m^{\mathrm{DT}}$ or $m^{\mathrm{PB}}$, respectively. If the rotational motion $R\colon[0,T]\to\SO(3)$ was known, this would be a classical inverse scattering problem and the Fourier coefficients of $f^{\mathrm{DT}}$ or $f^{\mathrm{PB}}$ could be directly read off from the measuremens via \autoref{eq:dt_fourier-diffr} and \autoref{eq:pb_measurements_Fourier} in these approximations.

However, the motion $R$ is not available in the experimental setup of \cite{LovPreThaRit21}, since the forces acting on the object depend on its internal structure which we want to detect with these measurements. The reconstruction problem thus takes a different turn in that we are now interested in recovering the unknown function $R$ from the measurement data, which would then allow us to reduce the problem back to the classical inverse scattering problem.

To explore the smooth motion $R$ of the object, we intend to show that it is generically possible to uniquely recover the change in rotation at every time step $t\in[0,T]$, that is, the derivative $R'(t)$, from the measurements in an infinitesimal neighborhood of $t$. Since $R(t)$ is in $\SO(3)$, we have that $R(t)^\top R'(t)$ is antisymmetric so that we can represent it with a three-dimensional vector.

\begin{definition}\label{def:ang_vel}
	Let $R\in C^1([0,T];\SO(3))$ be a time-dependent rotation. We define the corresponding angular velocity $\omega\colon [0,T]\to\R^3$ via the relation
	\begin{equation}\label{eq:def-omega}
		R(t)^\top R'(t) y=\omega(t)\times y \text{ for all }t\in[0,T],\;y\in\R^3.
	\end{equation}
\end{definition}

Given this angular velocity, the rotation can, of course, be determined by solving the linear ordinary differential equation in \autoref{eq:def-omega}, where we choose, considering the object at time $0$ as the reference state, as initial condition $R(0)$ to be the identity matrix $\I_{3\times3}$.

Since the direction $e_3$ is singled out as the illumination direction, it will be convenient to express the angular velocity in cylindrical coordinates, where we will conventionally use for a general vector $x\in\R^3$ the notation
	\begin{equation}\label{eq:def-cylind}
		x= \begin{pmatrix} \rho_x \phi_x \\ \zeta_x\end{pmatrix}=\begin{pmatrix} \rho_x \cos(\varphi_x)\\ \rho_x \sin(\varphi_x)\\ \zeta_x\end{pmatrix}
	\end{equation}
with the azimuth direction $\phi_x$ defined by the azimuth angle $\varphi_x\in [0,\pi)$, the cylindrical radius $\rho_x\in\R$, and the third component $\zeta_x\in\R$. Note that, in contrast to classical cylindrical coordinates, we allow negative radii $\rho_x$, but restrict in exchange the azimuth direction $\phi_x$ to the upper semicircle $S^1_+\coloneqq\set{(\cos(\alpha),\sin(\alpha))}{\alpha\in[0,\pi)}$.

To simplify the analysis, we are neglecting the possibility of translational motions of the object completely and we are conveniently assuming that the center of the functions $f^{\mathrm{DT}}$ and $f^{\mathrm{PB}}$, which we want to recover, is known to be in the origin so that the objects are only rotated around their center. Moreover, since it does not affect the theory, we consider both functions to be complex-valued, although at least $f^{\mathrm{PB}}$ is, according to the physical derivation, clearly real-valued. This leads us to a common underlying space of admissible objects for both reconstruction problems.

\begin{definition}\label{def:admis_obj}
	We define the set $\mathcal O$ of admissible objects as the set of complex-valued square integrable functions with vanishing first moments defined in the ball $\B_{r_{\mathrm s}}^3$ of some fixed radius $r_{\mathrm s}>0$ and center $0$:
	\begin{equation}\label{eq:obj}
		\mathcal{O}\coloneqq\set*{f\in L^2(\B^3_{r_{\mathrm s}})}{\int_{\B^3_{r_{\mathrm s}}}xf(x)\d x = 0}.
	\end{equation}
\end{definition}

\begin{lemma}\label{prop:admissible_obj_closed}
	The set $\mathcal{O}$ is a closed subspace of $L^2(\B^3_{r_{\mathrm s}})$.
\end{lemma}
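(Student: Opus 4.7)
The plan is to realize $\mathcal{O}$ as the intersection of the kernels of three continuous linear functionals on $L^2(\B^3_{r_{\mathrm s}})$, from which both the subspace property and closedness follow immediately.

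Concretely, for each $j\in\{1,2,3\}$, define $\ell_j\colon L^2(\B^3_{r_{\mathrm s}})\to\C$ by
\[
\ell_j(f)\coloneqq\int_{\B^3_{r_{\mathrm s}}} x_j\,f(x)\d x,
\]
so that $\mathcal{O}=\bigcap_{j=1}^3\ker\ell_j$. Linearity of $\ell_j$ is immediate from linearity of the integral, so $\mathcal{O}$ is a linear subspace. For continuity, observe that the coordinate function $x\mapsto x_j$ is bounded by $r_{\mathrm s}$ on $\B^3_{r_{\mathrm s}}$, hence lies in $L^2(\B^3_{r_{\mathrm s}})$ with norm at most $r_{\mathrm s}\sqrt{\abs{\B^3_{r_{\mathrm s}}}}$. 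The Cauchy--Schwarz inequality then yields
\[
\abs{\ell_j(f)}\le\norm{x_j}_{L^2(\B^3_{r_{\mathrm s}})}\norm{f}_{L^2(\B^3_{r_{\mathrm s}})},
\]
so each $\ell_j$ is a bounded linear functional and $\ker\ell_j$ is closed.

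Finally, $\mathcal{O}$ is the intersection of three closed subspaces and is therefore itself a closed subspace of $L^2(\B^3_{r_{\mathrm s}})$. There is no genuine obstacle here; the only subtlety worth flagging is that one must check $x\mapsto x_j$ belongs to $L^2(\B^3_{r_{\mathrm s}})$ so that Cauchy--Schwarz applies, which is automatic from the boundedness of the ball.
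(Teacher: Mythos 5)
Your proof is correct and rests on the same key estimate as the paper's: the Cauchy--Schwarz (Hölder) bound $\abs{\int x_j f(x)\d x}\le\norm{x_j}_{L^2}\norm{f}_{L^2}$, which the paper applies directly to a convergent sequence while you phrase it as boundedness of the moment functionals and closedness of their kernels. The two arguments are essentially identical in substance, so nothing further is needed.
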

\begin{proof}
	Let $(f_j)_{j=1}^\infty\subset\mathcal{O}$ be a converging sequence with limit $f\in L^2(\B_{r_{\mathrm s}}^3)$. Hölder's inequality then gives us
	\[ \norm*{\int_{\B^3_{r_{\mathrm s}}}x f(x) \d x} = \norm*{\int_{\B^3_{r_{\mathrm s}}}x (f(x)-f_j(x)) \d x} \le \left(\int_{\B^3_{r_{\mathrm s}}}\norm{x}^2\d x\right)^{\frac{1}{2}}\norm{f-f_j}_{L^2}, \]
	where right hand side converges to $0$ as $j\to\infty$ so that $f\in\mathcal{O}$.
\end{proof}

\section{Reconstruction for diffraction tomography data}\label{section:dt}
We will first discuss the reconstruction problem of the rotational motion from the diffraction tomography data given in \autoref{eq:dt_fourier-diffr}. To simplify the notation, let us get rid of the unneccessary constants.

\begin{definition}\label{def:pb_measurements}
	Let $f\in\mathcal{O}$ be an admissible object and $R\colon[0,T]\to\SO(3)$ be a rotational motion. We define the diffraction tomography measurements $\hat m$ of $f$ under the rotation $R$ as
		\begin{equation}\label{eq:dt_def_meas}
				\hat m\colon[0,T]\times\B_{k_0}^2\to\C,\;\hat m(t,k)\coloneqq \hat f\left(R(t) \begin{pmatrix}k\\h(\norm{k})\end{pmatrix}\right),
		\end{equation}
where $h$ is given by \autoref{eq:dt_def_h}.
\end{definition}

The main question we want to answer now is the following.
\begin{problem}\label{pr:dt}
Let $T>0$ and $k_0>0$ be some fixed parameters. Under which conditions on the function $f\in\mathcal O$, is it possible to uniquely recover a rotation $R\in C^1([0,T];\SO(3))$ with the normalization $R(0)=\I_{3\times3}$ from the corresponding diffraction tomography measurements $\hat m\colon[0,T]\times\B_{k_0}^2\to\C$?
\end{problem}

Geometrically, the measurements $\hat m(t,\cdot)$ at a given time $t$ are given by function values of $\hat f$ on a hemisphere that is rotated according to $R(t)$. When considering measurements at two distinct times $s, t \in [0, T]$, the corresponding rotated hemispheres typically intersect in a circle or a circular arc. Parametrizing a point on this intersection by $R(t)(k,h(\norm k))=R(s)(\tilde{k},h(\norm{\tilde{k}})$ with two values $k, \tilde{k} \in \B_{k_0}^2$, we see that
\[ \hat m(t,k) = \hat{f}\left(R(t) \begin{pmatrix} k \\ h(\|k\|) \end{pmatrix}\right) = \hat{f}\left(R(s) \begin{pmatrix} \tilde{k} \\ h(\|\tilde{k}\|) \end{pmatrix}\right) = \hat m(s,\tilde k). \]  
This relationship is referred to as the common circle equation, which gives us, by identifying in the two data sets $m(t,\cdot)$ and $m(s,\cdot)$ pairs $(k,\tilde k)\in\B_{k_0}^2\times B_{k_0}^2$ of points with the same measurement values, a condition on the relative orientation $R(s)^\top R(t)$ of the object between the time steps $s$ and $t$. The approach to derive from this the motion $R$ is commonly known as the common circle or common arc method and we refer to the articles~\cite{HulSzoHaj03,BorTeg11,QueElbSchSte24} for further details.

In \cite[Section 4]{QueElbSchSte24}, an infinitesimal version of the common circle equation was derived by taking for a differentiable motion $R$ the limit $s\to t$.
Using the angular velocity $\omega$ associated to $R$, written in cylindrical coordinates as specified in \autoref{eq:def-cylind}, and setting $k^\perp\coloneqq(-k_2,k_1)$ for an arbitrary vector $k=(k_1,k_2)\in\R^2$, this infinitesimal common circle equation for the values $\rho_\omega(t)$, $\phi_\omega(t)$, and $\zeta_\omega(t)$ reads, according to \cite[Lemma 4.1]{QueElbSchSte24}, at every time step $t\in[0,T]$
\begin{equation}\label{eq:dt_inf-common-circle}	
	\partial_t\hat m(t,\mu \phi_\omega(t))=\big(\mu\zeta_\omega(t)-\rho_\omega(t)h(\mu)\big)\inner{\nabla_k\hat m(t,\mu\phi_\omega(t))}{\phi_\omega(t)^\perp}\text{ for all }\mu\in(-k_0,k_0).
\end{equation}
Here, we denote by $\inner\cdot\cdot$ the standard inner product on $\C^3$ which is linear in the first argument, and the gradient is taken with respect to the variable $k$ only.

\subsection{Characterization of objects which allow for a unique rotation reconstruction}

A priori, it is not clear at all if \autoref{eq:dt_inf-common-circle} is a sufficient condition to uniquely recover the angular velocity $\omega(t)$ at the time $t$. And in the extremal case of a spherically symmetric function $f$, both sides of the equation are in fact trivially zero for every choice of values $\rho_\omega(t)$, $\phi_\omega(t)$, and $\zeta_\omega(t)$.

However, if the function $f$ is sufficiently irregular, it turns out that we can indeed guarantee that \autoref{eq:dt_inf-common-circle} possesses a unique solution.

\begin{definition}\label{def:dt_symmetry}
	We call a function $f\in\mathcal{O}$ DT-symmetric if there exist unit vectors $\xi,\eta,\nu\in S^2$, where $S^2\coloneqq\set{x\in\R^3}{\norm{x}=1}$, with $\inner{\xi}{\eta}=0$ and such that we have 
		\begin{equation}\label{eq:dt_symmetric}
			\inner{\nabla\hat f(\mu\xi+h(\mu)\eta)}{(\mu\xi+h(\mu)\eta)\times\nu}=0 \text{ for all } \mu\in(-k_0,k_0),
		\end{equation}
	where $h$ is given by \autoref{eq:dt_def_h}; otherwise we call it DT-asymmetric.
	Moreover, we denote the set of admissible DT-symmetric functions by $\mathcal{S}_{\mathrm{DT}}$.
\end{definition}

An object is hence DT-symmetric if the gradient of its Fourier transform evaluated along a semicircle given by $\mu\mapsto\mu\xi+h(\mu)\eta$ only varies in certain planes depending on $\mu$. In the special case $\nu\in\spn\{\xi,\eta\}$, we have that the vectors $(\mu\xi+h(\mu)\eta)\times\nu$ and $\xi\times\eta$ are for every $\mu\in(-k_0,k_0)$ parallel so that 
\autoref{eq:dt_symmetric} then reads
	\begin{equation}\label{eq:dt_symmetric_reduced}
		\inner{\nabla\hat f(\mu\xi+h(\mu)\eta)}{\xi\times\eta}=0 \text{ for all } \mu\in(-k_0,k_0).
	\end{equation}

While the definition of DT-symmetry might not be intuitively interpretable in the spatial domain, it surely includes mirror symmetric objects. 

\begin{example}\label{ex:dt_mirror_sym}
	Consider for arbitrary vectors $\xi,\eta\in S^2$ with $\inner{\xi}{\eta}=0$ a mirror symmetric object $f\in\mathcal O$ with the mirror plane $M\coloneqq\set{x\in\R^3}{\inner{\xi\times\eta}{x}=0}$, that is, we have that $f(x_1\xi+x_2\eta+x_3\xi\times\eta)=f(x_1\xi+x_2\eta-x_3\xi\times\eta)$ for all $x_1,x_2,x_3\in\R$. Its Fourier transform $\hat f$ inherits this mirror symmetry of the original function $f$ so that the gradient $\nabla\hat f(\kappa)$ must therefore lie at every point $\kappa\in M$ in the mirror plane $M$.
The object thus fulfills the DT-symmetry condition
given in \autoref{eq:dt_symmetric_reduced}.
\end{example}

This DT-asymmetry of an object now proves to be exactly the condition which is needed for the angular velocity $\omega$ corresponding to the true rotation $R$ to be the unique solution of the infinitesimal common circle equation at every time step.
In particular, the following proposition contains a proof of \autoref{eq:dt_inf-common-circle}, similar to the one from \cite[Lemma~4.1]{QueElbSchSte24}, as we demonstrate that \autoref{eq:dt_inf-common-circle} is satisfied by the angular velocity.

\begin{proposition}\label{thm:dt_uniqueness}
	Let $\hat m$ be the diffraction tomography measurements of a DT-asymmetric function $f\in\mathcal{O}\setminus\mathcal{S}_{\mathrm{DT}}$ under the rotation $R\in C^1([0,T];\SO(3))$ with associated angular velocity $\omega$ written in cylindrical coordinates $\rho_\omega, \phi_\omega$, and $\zeta_\omega$ as in \autoref{eq:def-cylind}.

A vector $u\in\R^3$, written in cylindrical coordinates $\rho_u, \phi_u$ and $\zeta_u$, then solves the infinitesimal common circle equation
	\begin{equation}\label{eq:dt_common_circle_unique}
		\partial_t\hat m(t,\mu \phi_u)=\left(\mu\zeta_u-h(\mu)\rho_u\right)\inner{\nabla_k\hat	m(t,\mu\phi_u)}{\phi_u^\perp} \text{ for every } \mu\in(-k_0,k_0)
	\end{equation}
at a time $t\in[0,T]$ if and only if $u=\omega(t)$.
\end{proposition}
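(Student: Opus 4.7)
The plan is to rewrite both sides of \autoref{eq:dt_common_circle_unique} via the chain rule as inner products of $\nabla\hat f$ with suitable vectors at the point $R(t)p_u\in\R^3$ on the rotated hemisphere, where $p_u\coloneqq(\mu\phi_u,h(\mu))^\top$, and then collapse the resulting equation into exactly the DT-symmetry condition \autoref{eq:dt_symmetric} for the vector $\nu\coloneqq R(t)(u-\omega(t))$. The DT-asymmetry hypothesis will then force $\nu=0$, which is precisely $u=\omega(t)$; the converse direction drops out of the same reformulation, because then the relevant cross product vanishes identically.

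Writing $\hat m(t,k)=\hat f(R(t)\kappa(k))$ with $\kappa(k)\coloneqq(k,h(\norm k))^\top$ and using $R'(t)y=R(t)(\omega(t)\times y)$ from \autoref{eq:def-omega}, the chain rule gives
\[ \partial_t\hat m(t,\mu\phi_u)=\inner{\nabla\hat f(R(t)p_u)}{R(t)(\omega(t)\times p_u)}. \]
For the spatial derivative, the third component of $D\kappa(\mu\phi_u)[\phi_u^\perp]$ vanishes because $\inner{\phi_u}{\phi_u^\perp}=0$ kills the derivative of $h(\norm{\cdot})$, leaving
\[ \inner{\nabla_k\hat m(t,\mu\phi_u)}{\phi_u^\perp}=\inner{\nabla\hat f(R(t)p_u)}{R(t)(\phi_u^\perp,0)^\top}. \]

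The one piece of actual computation is the cross-product identity
\[ (\mu\zeta_u-h(\mu)\rho_u)\begin{pmatrix}\phi_u^\perp\\0\end{pmatrix}=u\times p_u, \]
obtained by decomposing $u=\rho_u(\phi_u,0)^\top+\zeta_ue_3$ and using $(\phi_u,0)^\top\times p_u=-h(\mu)(\phi_u^\perp,0)^\top$ together with $e_3\times p_u=\mu(\phi_u^\perp,0)^\top$. Substituting into \autoref{eq:dt_common_circle_unique} and invoking that $R(t)\in\SO(3)$ preserves the cross product reduces the common circle equation to
\[ \inner{\nabla\hat f(R(t)p_u)}{R(t)\big((\omega(t)-u)\times p_u\big)}=0\text{ for all }\mu\in(-k_0,k_0). \]
Setting $\xi\coloneqq R(t)(\phi_u,0)^\top$, $\eta\coloneqq R(t)e_3$, and $\nu\coloneqq R(t)(u-\omega(t))$, one checks that $\xi,\eta\in S^2$ are orthogonal and $R(t)p_u=\mu\xi+h(\mu)\eta$, so this is precisely \autoref{eq:dt_symmetric} for $\nu/\norm\nu$ whenever $\nu\neq 0$. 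Since $f\notin\mathcal S_{\mathrm{DT}}$, we must have $\nu=0$, i.e.\ $u=\omega(t)$; conversely, $u=\omega(t)$ makes $(\omega(t)-u)\times p_u$ identically zero so the common circle equation holds trivially. The only non-routine ingredient is spotting the cross-product identity above; everything else is mechanical bookkeeping with the chain rule and with the compatibility between $\SO(3)$ and the cross product.
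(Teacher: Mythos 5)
Your proof is correct and follows essentially the same route as the paper's: both reduce the infinitesimal common circle equation to $\inner{\nabla\hat f(\mu\xi+h(\mu)\eta)}{(R(t)(\omega(t)-u))\times(\mu\xi+h(\mu)\eta)}=0$ and then invoke DT-asymmetry to force $u=\omega(t)$. The only (cosmetic) difference is that you package the algebra via the identity $(\mu\zeta_u-h(\mu)\rho_u)(\phi_u^\perp,0)^\top=u\times p_u$ and the $\SO(3)$-equivariance of the cross product, where the paper instead expands $\omega(t)\times p_u$ componentwise and regroups; this is a slightly cleaner bookkeeping of the same computation.
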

\begin{proof}
	We use \autoref{def:pb_measurements} of the measurements $\hat m$ together with \autoref{def:ang_vel} of the angular velocity and compute the derivatives to express \autoref{eq:dt_common_circle_unique} in terms of the functions $\hat f$ and $R$:
	\[\inner*{\nabla \hat f\left(R(t) \begin{pmatrix}
			\mu\phi_u \\h(\mu)
		\end{pmatrix}\right)}{R(t)\left(\omega(t)\times \begin{pmatrix}
			\mu\phi_u \\h(\mu)
		\end{pmatrix}\right)} = (\mu\zeta_u-h(\mu)\rho_u)\inner*{\nabla \hat f\left(R(t) \begin{pmatrix}
			\mu\phi_u \\h(\mu)
		\end{pmatrix}\right)}{R(t) \begin{pmatrix}
		\phi_u^\perp \\0
	\end{pmatrix}}.\]
	Expanding the vector product, 
	\begin{align*}
		\omega(t)\times \begin{pmatrix}
			\mu\phi_u \\h(\mu)
		\end{pmatrix} &= \begin{pmatrix}\rho_\omega(t)\phi_\omega(t)\\0\end{pmatrix}\times\begin{pmatrix}\mu\phi_u\\0\end{pmatrix}+\begin{pmatrix}\rho_\omega(t)\phi_\omega(t)\\0\end{pmatrix}\times\begin{pmatrix}0\\h(\mu)\end{pmatrix}+\begin{pmatrix}0\\\zeta_\omega(t)\end{pmatrix}\times\begin{pmatrix}\mu\phi_u\\0\end{pmatrix} \\
		&= \mu\rho_\omega(t)\begin{pmatrix}0\\\inner{\phi^\perp_\omega(t)}{\phi_u}\end{pmatrix}-h(\mu)\rho_\omega(t)\begin{pmatrix}\phi^\perp_\omega(t)\\0\end{pmatrix}+\mu\zeta_\omega(t)\begin{pmatrix}\phi_u^\perp\\0\end{pmatrix},
	\end{align*}
	we see that this is equivalent to the relation
		\begin{equation}\label{eq:dt_common_circle_unique_simpl}
			\inner*{\nabla \hat f\left(R(t) \begin{pmatrix}
					\mu\phi_u \\h(\mu)
				\end{pmatrix}\right)}{R(t)\left(\mu\begin{pmatrix}(\zeta_\omega(t)-\zeta_u)\phi_u^\perp\\\rho_\omega(t)\inner*{\phi^\perp_\omega(t)}{\phi_u}\end{pmatrix}
				+h(\mu)\begin{pmatrix}\rho_u\phi_u^\perp-\rho_\omega(t)\phi^\perp_\omega(t)\\0\end{pmatrix}\right)}=0.
		\end{equation}
	From this, it is clear that the choice $\zeta_u=\zeta_\omega(t)$ and $\rho_u\phi_u=\rho_\omega(t)\phi_\omega(t)$ is a solution. 
	
	It remains to show that this solution is indeed unique.
	So, let $(\rho_u,\phi_u,\zeta_u)$ be a solution and define the orthonormal vectors
	\[ \xi\coloneqq R(t)\begin{pmatrix}\phi_u\\0\end{pmatrix}\text{ and }\eta\coloneqq R(t)\begin{pmatrix}
		0\\ 1
	\end{pmatrix}, \]
	so that we have
	\[ R(t)\begin{pmatrix}
		\mu\phi_u \\h(\mu)
	\end{pmatrix} = \mu \xi+h(\mu)\eta\text{ and }\xi\times\eta=-R(t)\begin{pmatrix}\phi^\perp_u\\0\end{pmatrix}. \]
	
	Observing that 
	\begin{align*}
&\left(R(t)(\omega(t)-u)\right)\times\xi=R(t)\left(\begin{pmatrix}
		\rho_\omega(t) \phi_\omega(t)-\rho_u \phi_u\\ \zeta_\omega(t)-\zeta_u
	\end{pmatrix}\times\begin{pmatrix}
		\phi_u\\ 0
	\end{pmatrix} \right)=R(t)\begin{pmatrix}(\zeta_\omega(t)-\zeta_u)\phi_u^\perp\\\rho_\omega(t)\inner{\phi^\perp_\omega(t)}{\phi_u}\end{pmatrix} \text{ and} \\
&\left(R(t)(\omega(t)-u)\right)\times\eta=R(t)\left(\begin{pmatrix}
		\rho_\omega(t) \phi_\omega(t)-\rho_u \phi_u\\ \zeta_\omega(t)-\zeta_u
	\end{pmatrix}\times\begin{pmatrix}
		0\\ 1
	\end{pmatrix} \right)=R(t)\begin{pmatrix}\rho_u\phi_u^\perp-\rho_\omega(t)\phi^\perp_\omega(t)\\0\end{pmatrix},
\end{align*}
	we find that \autoref{eq:dt_common_circle_unique_simpl} can be written equivalently as
		\begin{equation}\label{eq:dt_common_circle_unique_simpl2}
			\inner{\nabla \hat f\left(\mu \xi+h(\mu)\eta\right)}{\left(R(t)(\omega(t)-u)\right)\times(\mu \xi+h(\mu)\eta)}=0\text{ for all }\mu\in(-k_0,k_0).
		\end{equation}
	If we had $u\neq\omega(t)$ now, then this would imply that $f$ satisfies the DT-symmetry condition from \autoref{eq:dt_symmetric} with a unit vector $\nu$ parallel to $R(t)(\omega(t) - u)$. Since $f\notin\mathcal S_{\mathrm{DT}}$, we thus necessarily have $u=\omega(t)$.
\end{proof}

We quickly want to take a look how this non-uniqueness looks like for mirror-symmetric objects.

\begin{example}
Let $R\colon[0,T]\to\SO(3)$ be a rotational motion and $\omega$ be the associated angular velocity. Assume that the object $f\in\mathcal O$ has a mirror symmetry and that the corresponding mirror plane for the rotated object $x\mapsto f(R(t)x)$
is at some time $t\in[0,T]$ perpendicular to the vector $(\phi_\omega^\perp(t),0)$, where we represent $\omega$ in cylindrical coordinates as before. With the unit vectors
\[ \xi=R(t)\begin{pmatrix} \phi_\omega(t)\\0 \end{pmatrix}\text{ and }\eta=R(t)\begin{pmatrix} 0\\1 \end{pmatrix}, \]
this means that $f(x_1\xi+x_2\eta+x_3\xi\times\eta) = f(x_1\xi+x_2\eta-x_3\xi\times\eta)$ for all $x_1,x_2,x_3\in\R$.

Since we then have that $\nabla\hat f(\kappa)$ is orthogonal to $\xi\times\eta$ for every $\kappa\in\spn\{\xi,\eta\}$, the infinitesimal common circle equation in the form of \autoref{eq:dt_common_circle_unique_simpl2} is fulfilled whenever $R(t)(\omega(t)-u) \in \spn\{\xi,\eta\}$, that is, for all
\[ u \in \set*{\begin{pmatrix}\rho\phi_\omega(t)\\\zeta\end{pmatrix}}{\rho,\zeta\in\R}. \]
\end{example}

\subsection{DT-asymmetric point sets and construction of DT-asymmetric functions}\label{section:DT_asym_construction}

To get a better understanding what sort of functions are DT-asymmetric and thus unproblematic to reconstruct, we restrict our attention for the moment to objects which essentially consist of a sum of finitely many point-like particles. The DT-asymmetry condition from \autoref{def:dt_symmetry} then translates into a condition for the positions of these point masses.

To formulate it, let us denote for every $\xi\in S^2$ by
	\begin{equation}\label{eq:det_orth_proj}
		\pi_\xi\colon\R^3\to\R^3,\;\pi_\xi(x)\coloneqq x-\inner{x}{\xi}\xi,
	\end{equation}
the orthogonal projection onto the hyperplane orthogonal to $\xi$.

\begin{definition}\label{def:dt_asym_points}
We call a finite set $P\subset\R^3$ of points a DT-asymmetric point set if we can select for every direction $\xi\in S^2$ two points $p_1(\xi),p_2(\xi)\in P$ such that we have for every $j\in\{1,2\}$ that
\begin{enumerate}
\item \label{cond:DT_asym_points_cond1}
there does not exist any point $p\in P\setminus\{p_j(\xi)\}$ and constant $c\in \R$ for which
\begin{equation}\label{eq:dt_asym_points_cond1}
\pi_\xi(p_j(\xi))= c \pi_\xi(p),
\end{equation}
that is, the orthogonal projections $\pi_\xi(p_j(\xi))$ and $\pi_\xi(p)$ of $p_j(\xi)$ and every other point $p\in P\setminus\{p_j(\xi)\}$ are not parallel unless $\pi_\xi(p)=0$, and

\item \label{cond:DT_asym_points_cond2}
the projection of $p_j(\xi)$ onto $\xi$ is nonzero:
\begin{equation}\label{eq:dt_asym_points_cond2}
\inner{p_j(\xi)}\xi\neq 0.
\end{equation}
\end{enumerate}
\end{definition}

In order for the condition in \autoref{cond:DT_asym_points_cond2} of \autoref{def:dt_asym_points} to be fulfilled, we have to find for every direction $\xi\in S^2$ two points in the set $P$ that do not lie in the linear subspace orthogonal to $\xi$. This is guaranteed if $P$ consists of at least four points of which any three distinct points are linearly independent.

However, the condition in \autoref{cond:DT_asym_points_cond1} will never be fulfilled for such a set $P=\{p_j\}_{j=1}^4$ with only four points, since we can then consider the two planes $\spn\{p_1,p_2\}$ and $\spn\{p_3,p_4\}$ and, for a vector $\xi\in S^2$ in the intersection of these planes, the orthogonal projections $\pi_\xi(p_1)$ and $\pi_\xi(p_2)$ of $p_1$ and $p_2$ as well as the orthogonal projections $\pi_\xi(p_3)$ and $\pi_\xi(p_4)$ of $p_3$ and $p_4$ will both be parallel to each other.

Thus, a set $P\supset\{p_j\}_{j=1}^6$ with at least six points is required to introduce a third plane $\spn\{p_5, p_6\}$ that does not intersect the original two planes $\spn\{p_1, p_2\}$ and $\spn\{p_3, p_4\}$ in a common line.

Since the two points fulfilling the first condition are not necessarily those for which the second condition holds, we will take a set with at least eight points such that we always have a set of four points of which every pair will satisfy the condition in \autoref{cond:DT_asym_points_cond1} of \autoref{def:dt_asym_points} and from which we can then choose a pair satisfying simultaneously the condition in \autoref{cond:DT_asym_points_cond2}.

\begin{lemma}\label{thm:dt_asym_points_existence}
	Let $N\geq8$ and $P=\{p_j\}_{j=1}^N\subset\R^3\setminus\{0\}$ be a finite set of points with the following properties:
		\begin{enumerate}
			\item $\det(p_i,p_j,p_k)\neq 0\text{ for all distinct }i,j,k\in\{1,\dots,N\}$ and \label{eq:DT_point_exist_cond1}
			\item $\det(p_i\times p_j,p_k\times p_\ell,p_m\times p_n)\neq 0\text{ for all distinct }i,j,k,\ell,m,n\in\{1,\dots,N\}$. \label{eq:DT_point_exist_cond2}
		\end{enumerate}
	Then, $P$ is a DT-asymmetric point set. 
\end{lemma}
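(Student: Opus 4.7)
My plan is to fix an arbitrary direction $\xi\in S^2$ and identify, by combinatorial bookkeeping, at least two points of $P$ that serve as $p_1(\xi),p_2(\xi)$. Geometrically, condition \textit{(i)} of \autoref{def:dt_asym_points} for a candidate $p_j(\xi)$ says that $p_j(\xi)$ is not parallel to $\xi$ and that no further point of $P$ lies, modulo points parallel to $\xi$, in the same plane through $\xi$ as $p_j(\xi)$; condition \textit{(ii)} says that $p_j(\xi)$ is not perpendicular to $\xi$. Setting $P'\coloneqq\set{p\in P}{p\notin\R\xi}$ and $O\coloneqq\set{p\in P}{\inner{p}{\xi}=0}$, this induces a partition of $P'$ into ``meridian classes'' of vectors sharing a common plane through $\xi$, and the task becomes to locate two singleton classes whose representatives lie outside $O$.

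The two hypotheses then supply all the needed bounds. Hypothesis \textit{(i)} of the lemma has three consequences: at most one point of $P$ is parallel to $\xi$ (so $|P'|\ge N-1\ge 7$); at most two points of $P$ lie in $\xi^\perp$ (so $|O|\le 2$); and each meridian class contains at most two points, since three vectors in a common plane through the origin would be linearly dependent. Hypothesis \textit{(ii)} in turn forces the number $k$ of two-element meridian classes to satisfy $k\le 2$: three such classes would yield three pairwise disjoint pairs whose cross products are simultaneously orthogonal to $\xi$ and linearly independent, which is impossible in $\R^3$. Consequently the number of singleton classes satisfies $|I|=|P'|-2k\ge 7-4=3$.

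A short case analysis then handles the generic situation. If $|I|\ge 4$, then $|I\setminus O|\ge 4-2=2$ directly. Otherwise $|I|=3$, which forces $|P'|=7$, hence $N=8$ and the existence of a unique $p_0\in P$ parallel to $\xi$, together with $k=2$. In this regime any $q\in O$ automatically belongs to $I$: an edge partner $p\in P'$ of $q$ would satisfy $\xi\in\spn\{q,p\}$, so $p_0,q,p$ would all lie in the plane $\spn\{q,p_0\}$ and be linearly dependent, contradicting hypothesis \textit{(i)}. Thus if $|O|\le 1$ we again obtain $|I\setminus O|\ge 2$, and the only remaining subcase is $O=\{q_1,q_2\}\subseteq I$.

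The main obstacle is ruling out this last configuration, and this is the place where hypothesis \textit{(ii)} is used essentially rather than only through the bound on $k$. Writing $\{p_a,p_b\}$ and $\{p_c,p_d\}$ for the two edges, I would apply hypothesis \textit{(ii)} to the three pairwise disjoint pairs $\{p_0,q_1\}$, $\{p_a,p_b\}$, $\{p_c,p_d\}$: the cross products $p_0\times q_1$, $p_a\times p_b$ and $p_c\times p_d$ should be linearly independent. But all three lie in the two-dimensional plane $\xi^\perp$ -- the first because $p_0\parallel\xi$, the other two because $\xi\in\spn\{p_a,p_b\}\cap\spn\{p_c,p_d\}$ by the edge property -- contradicting the linear independence and completing the proof.
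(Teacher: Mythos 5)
Your proof is correct. The underlying geometry is the same as in the paper's argument: condition \textit{(i)} of the definition of a DT-asymmetric point set for a candidate $p_j(\xi)$ is exactly the statement that its ``meridian class'' (the plane $\spn\{\xi,p_j(\xi)\}$ intersected with $P$, modulo points parallel to $\xi$) is a singleton; hypothesis \textit{(i)} of the lemma bounds each class by two elements and the number of points parallel (resp.\ orthogonal) to $\xi$ by one (resp.\ two); and hypothesis \textit{(ii)} bounds the number of doubleton classes by two, since their normals would all lie in $\xi^\perp$. What differs is the case decomposition. The paper splits at the outset on whether $\xi$ is parallel to some point of $P$, and in each branch the count is immediate (at least $N-4\ge4$ singleton representatives in the first branch, all of $P\setminus\{q\}$ in the second, minus at most two points orthogonal to $\xi$). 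Your uniform count $|I|=|P'|-2k\ge 7-4=3$ leaves the residual case $|I|=3$, which you close with a further application of hypothesis \textit{(ii)}. It is worth noting that this residual case is already vacuous by hypothesis \textit{(i)} alone: $|P'|=7<N$ forces a point $p_0\in\R\xi\cap P$, and then any doubleton class $\{p_a,p_b\}$ would have $\xi$, hence $p_0$, in $\spn\{p_a,p_b\}$, giving $\det(p_0,p_a,p_b)=0$ and contradicting $k=2$. Your treatment of that subcase is nonetheless valid, just more elaborate than necessary; the paper's upfront dichotomy makes this interaction between ``$\xi$ parallel to a point'' and ``doubleton classes'' transparent from the start.
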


\begin{proof}
We consider the set $\mathcal E\coloneqq\set{\spn\{p_i,p_j\}}{i,j\in\{1,\ldots,N\},\,i\ne j}$ of all linear subspaces spanned by two points in $P$. The condition in \autoref{eq:DT_point_exist_cond1} then tells us that $E\cap P$ consists for every plane $E\in\mathcal E$ of exactly two points. And the property in \autoref{eq:DT_point_exist_cond2} states that the normal vectors of three different planes $E_1,E_2,E_3\in\mathcal E$ are linearly independent so that the subspaces $E_1$, $E_2$, and $E_3$ do not intersect in a common line.

Let now $\xi\in S^2$ be an arbitrarily given vector.
\begin{itemize}
\item
If $\xi$ is not parallel to any point in $P$, the set $\mathcal E_0\subset\mathcal E$ of all planes $E\in\mathcal E$ with $\xi\in E$ can therefore have at most two elements. The set $P_0\coloneqq\bigcup_{E\in\mathcal E_0}E\cap P$ of all points spanning the planes in $\mathcal E_0$ thus consists of at most four points. And for all other points $p\in P\setminus P_0$, we have that $\spn\{\xi,p\}\cap P=\{p\}$, meaning that the projection $\pi_\xi(p)$, defined as in \autoref{eq:det_orth_proj}, is not parallel to any vector $\pi_\xi(\tilde p)$ for $\tilde p\in P\setminus\{p\}$.

Out of the at least four points in $P\setminus P_0$, there can be at most two orthogonal to $\xi$ so that we can find two points $p_1(\xi),p_2(\xi)\in P\setminus P_0$ which satisfy \autoref{eq:dt_asym_points_cond2}.
\item
If $\xi$ is parallel to some element $q\in P$, then we have for every $p\in P$ that $\spn\{\xi,p\}=\spn\{q,p\}\in\mathcal E$ so that $\spn\{\xi,p\}\cap P=\{q,p\}$, which means that $\pi_\xi(p)$ is not parallel to $\pi_\xi(\tilde p)$ for any $\tilde p\in P\setminus\{q,p\}$ and $\pi_\xi(q)=0$.

We therefore choose arbitrary points $p_1(\xi),p_2(\xi)\in P\setminus\{q\}$ which are not orthogonal to $\xi$.
\end{itemize}

In both cases, the points $p_1(\xi),p_2(\xi)\in P$ then fulfill by construction the conditions in \autoref{def:dt_asym_points}.
\end{proof}

\begin{remark}\label{rem:dt_asym_points_construction}
This shows that we can iteratively construct a DT-asymmetric set with arbitrarily many elements $N\ge8$ by starting at $P_0\coloneqq\emptyset$ and successively adding points $p_{n+1}\in\R^3\setminus\{0\}$, $n\in\{1,\ldots,N-1\}$, such that $P_{n+1}\coloneqq P_n\cup\{p_{n+1}\}$ fulfills the properties in \autoref{eq:DT_point_exist_cond1} and \autoref{eq:DT_point_exist_cond2} of \autoref{thm:dt_asym_points_existence}. Since these conditions would only fail if $p_{n+1}$ is chosen on a union of finitely many certain two-dimensional subspaces ($p_{n+1}$ should not lie on a subspace spanned by two points of $P_n$ and it should not lie on a subspace spanned by the intersection of two such subspaces and another point in $P_n$), we can even choose $p_{n+1}$ to lie in a predefined relatively open set on a sphere around the origin.
\end{remark}

To ensure that the DT-asymmetric function we construct will belong to the set $\mathcal{O}$ of admissible objects, which means, in particular, that it has to have vanishing first-order moments, we determine nonzero weights such that the weighted sum of all points in the underlying DT-asymmetric point set is in the origin.

\begin{lemma}\label{thm:dt_weights}
	Let $N\geq4$ and $P=\{p_j\}_{j=1}^N\subset\R^3\setminus\{0\}$ be a finite set of points satisfying
		\[\det(p_i,p_j,p_k)\neq 0\text{ for all distinct }i,j,k\in\{1,\dots,N\}.\]
	Then, there exists a corresponding set $\{w_j\}_{j=1}^N\subset\R\setminus\{0\}$ of weights such that $\sum_{j=1}^{N} w_j p_j=0$.
\end{lemma}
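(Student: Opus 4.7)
The natural approach is linear-algebraic. I would consider the linear map
\[ W \colon \R^N \to \R^3, \qquad W(w) \coloneqq \sum_{j=1}^N w_j p_j, \]
and look for a weight vector $w$ in the kernel $K \coloneqq \ker W$ having all components nonzero. The assumption that any three distinct points among $\{p_j\}_{j=1}^N$ are linearly independent implies in particular that the $p_j$ span $\R^3$, so $W$ is surjective and $\dim K = N - 3 \ge 1$. For $N=4$ this already does the job: $K$ is one-dimensional, and if some nonzero $w \in K$ had a vanishing component $w_j = 0$, then the relation $\sum_{i \ne j} w_i p_i = 0$ would contradict the linear independence of the remaining three points. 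So any nonzero element of $K$ works.

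For $N \ge 5$ the key step is to rule out that $K$ is contained in the union of the coordinate hyperplanes $H_j \coloneqq \{w \in \R^N : w_j = 0\}$, $j=1,\ldots,N$. To this end I would compute the dimension of each intersection $K_j \coloneqq K \cap H_j$. By dropping the $j$-th coordinate, $K_j$ is isomorphic to the kernel of the restricted map $W_j \colon \R^{N-1} \to \R^3$, $(w_i)_{i\ne j} \mapsto \sum_{i\ne j} w_i p_i$. Since $N-1 \ge 4 \ge 3$ and any three of the remaining points are still linearly independent by hypothesis, $W_j$ is again surjective, so $\dim K_j = (N-1) - 3 = N - 4 < \dim K$. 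Hence each $K_j$ is a \emph{proper} subspace of $K$.

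Finally, I would invoke the standard fact that a vector space over an infinite field (here $\R$) cannot be written as a finite union of proper subspaces. Applied to $K = \bigcup_{j=1}^N K_j$, this forces the existence of some $w \in K \setminus \bigcup_{j=1}^N K_j$, i.e., a nonzero weight vector in $K$ with $w_j \ne 0$ for every $j \in \{1,\ldots,N\}$, which is exactly the assertion of the lemma.

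\textbf{Expected obstacle.} The argument is quite clean, and I do not anticipate a serious technical obstacle; the only point requiring care is verifying $\dim K_j = N - 4$, which is where the full strength of the assumption (every three distinct points are linearly independent, not just some triple) is used, since one needs the restricted system $\{p_i\}_{i\ne j}$ to still contain a spanning triple for every choice of $j$.
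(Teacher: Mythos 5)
Your argument is correct, and it takes a genuinely different route from the paper. The paper proceeds by induction on $N$: for $N=4$ it writes $p_4=\sum_{j=1}^3 a_jp_j$ with all $a_j\neq0$ (forced by the linear independence of any three points), and in the inductive step it adds $\lambda(p_N-\sum a_jp_j)=0$ to the relation obtained for $\{p_j\}_{j=1}^{N-1}$ and picks $\lambda\neq0$ outside the finitely many values that would annihilate a coefficient. You instead do a single global dimension count: the kernel $K$ of $w\mapsto\sum_j w_jp_j$ has dimension $N-3$, each slice $K\cap\{w_j=0\}$ has dimension $N-4$ because the remaining points still contain a spanning triple, and a real vector space is never a finite union of proper subspaces. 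Both arguments rest on the same two uses of the hypothesis (surjectivity of the restricted maps, equivalently nonvanishing of the expansion coefficients), and the paper's ``choose $\lambda$ avoiding finitely many bad values'' is essentially your genericity step carried out along a one-parameter family inside $K$. Your version is shorter and more conceptual (and in fact your $N=4$ case is already subsumed by the union argument, since there $\dim(K\cap\{w_j=0\})=0<1=\dim K$); the paper's version is constructive and yields an explicit recipe for the weights, which is mildly useful when such point configurations are actually built, as in \autoref{rem:dt_asym_points_construction}.
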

\begin{proof}
	We proceed by induction on $N$.

	Since any three points in $P$ are linearly independent, we can express $p_4$ uniquely as $p_4=\sum_{j=1}^3 a_j p_j$, where each coefficient $a_j \neq 0$. Thus, setting $w_j \coloneqq a_j$ for $j=1,2,3$ and $w_4 \coloneqq -1$, guarantees the result for the base case $N=4$.

	If the statement holds for $N-1$ for some $N\ge5$, there exists for the subset $\{p_j\}_{j=1}^{N-1}$ of $P$ a set $\{w_j\}_{j=1}^{N-1}\subset\R\setminus\{0\}$ with $\sum_{j=1}^{N-1}w_j p_j=0$.
	By the linear independence of any three points, we find nonzero coefficients $a_{N-3}, a_{N-2}, a_{N-1} \in \R\setminus \{0\}$ such that
	\[
	p_N = a_{N-3} p_{N-3} + a_{N-2} p_{N-2} + a_{N-1} p_{N-1}.
	\]
	
	Multiplying this by an arbitrary $\lambda\in\R\setminus\{0\}$ and adding it to our induction hypothesis yields
	\[
	\sum_{j=1}^{N-4} w_j p_j + (w_{N-3} - \lambda a_{N-3}) p_{N-3} + (w_{N-2} - \lambda a_{N-2}) p_{N-2} + (w_{N-1} - \lambda a_{N-1}) p_{N-1} + \lambda p_N = 0.
	\]
	
	Since all the $w_j$ and $a_j$ are nonzero, we can now choose $\lambda\in\R\setminus\{0\}$ such that none of the coefficients of the $p_j$, $j\in\{1,\ldots,N\}$, vanish, proving the statement for $N$.
\end{proof}

By placing radially symmetric functions at the points of a DT-asymmetric set and scaling them correspondingly to ensure that the first-order moments vanish, we now get a DT-asymmetric function.

\begin{lemma}\label{thm:dt_def_dist}
	Let $P=\{p_j\}_{j=1}^N\subset \B_{r_{\mathrm s}}^3$ be a DT-asymmetric point set and $\{w_j\}_{j=1}^N\subset\R\setminus\{0\}$ be corresponding weights with $\sum_{j=1}^{N} w_j p_j=0$.
	Moreover, let $\psi\in L^2(\B_{r_{\mathrm s}}^3)\setminus\{0\}$ be a radial function with $\supp(\psi)\subset \B^3_\delta$, where $\delta\coloneqq\mathrm{dist}(P,\partial \B_{r_{\mathrm s}}^3)=\min_{p\in P}\abs{\norm{p}-r_{\mathrm s}}$.

	The function
	\begin{equation}\label{eq:dt_def_dist}
		\Psi\colon\B_{r_{\mathrm s}}^3\to\C,\;\Psi(x)\coloneqq\sum_{j=1}^{N}w_j \psi(x-p_j),
	\end{equation}
	in $\mathcal O$ is then DT-asymmetric.
\end{lemma}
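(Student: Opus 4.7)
The plan is to verify $\Psi\in\mathcal O$, reduce the DT-symmetry condition for $\Psi$ to an algebraic vanishing statement on the weighted point cloud $\{(w_j,p_j)\}$, and then derive a contradiction from \autoref{def:dt_asym_points} applied to the normal direction $\xi\times\eta$. Membership is routine: since $\supp\psi(\cdot-p_j)\subset p_j+\B_\delta^3\subset\B_{r_{\mathrm s}}^3$ by the choice of $\delta$, we have $\Psi\in L^2(\B_{r_{\mathrm s}}^3)$, and the substitution $y=x-p_j$ together with the radiality of $\psi$ (which kills $\int y\psi(y)\d y$) and the weight condition $\sum_j w_j p_j=0$ makes all first moments of $\Psi$ vanish.

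For the reduction, I would factor $\hat\Psi(k)=\hat\psi(k)\,g(k)$ with $g(k)\coloneqq\sum_j w_j\e^{-\i\inner{p_j}{k}}$. Since $\hat\psi$ is radial, $\nabla\hat\psi(k)$ is parallel to $k$, so $\inner{\nabla\hat\psi(k)}{k\times\nu}=0$; and as $\hat\psi$ is real-analytic and not identically zero on the semicircle $k(\mu)=\mu\xi+h(\mu)\eta$, the DT-symmetry identity \eqref{eq:dt_symmetric} is equivalent to
\[
F(\mu)\coloneqq\sum_{j=1}^N w_j\inner{p_j}{k(\mu)\times\nu}\e^{-\i\inner{p_j}{k(\mu)}}\equiv 0\text{ on }(-k_0,k_0).
\]
Decomposing $p_j=\alpha_j\xi+\beta_j\eta+\gamma_j(\xi\times\eta)$ and $\nu=n_1\xi+n_2\eta+n_3(\xi\times\eta)$, and grouping by the projection $q_j\coloneqq(\alpha_j,\beta_j)\in\R^2$ via $W_q\coloneqq\sum_{j:q_j=q}w_j$ and $\Gamma_q\coloneqq\sum_{j:q_j=q}w_j\gamma_j$, a direct calculation recasts this as $F(\mu)=\sum_q[h(\mu)\,U_q+\mu\,V_q]\e^{-\i\inner{q}{k(\mu)}}$ with $U_q=n_3\alpha_q W_q-n_1\Gamma_q$ and $V_q=n_2\Gamma_q-n_3\beta_q W_q$, writing $q=(\alpha_q,\beta_q)$.

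For the analytic core I extend $F$ to $\C\setminus((-\infty,-k_0]\cup[k_0,\infty))$ using the branch of $\sqrt{k_0^2-\mu^2}$ that is real-positive on $(-k_0,k_0)$; then $F\equiv 0$ persists there by the identity principle, and $h(\mu)\sim-\i\mu-k_0$ as $|\mu|\to\infty$ in the upper half-plane. On the ray $\mu=r\e^{\i\theta}$, $\theta\in(0,\pi)$, the exponential $\e^{-\i\inner{q}{k(\mu)}}$ has modulus $\e^{r\inner{q}{\sigma_\theta}}$ with $\sigma_\theta=(\sin\theta,-\cos\theta)$. Choosing $\theta$ so that $q\mapsto\inner{q}{\sigma_\theta}$ has a unique maximiser $q^\ast$ (a vertex of the convex hull of the projections), the contributions from $q\ne q^\ast$ decay exponentially in $r$ compared with the $q^\ast$ term, so dividing $F(r\e^{\i\theta})$ by $r\e^{\i\theta}\e^{r\inner{q^\ast}{\sigma_\theta}}$ and letting $r\to\infty$ forces $V_{q^\ast}-\i U_{q^\ast}=0$, hence $U_{q^\ast}=V_{q^\ast}=0$, i.e.\ the vector identity $n_3 q^\ast W_{q^\ast}=\Gamma_{q^\ast}(n_1,n_2)$ in $\R^2$. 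Since $U_{q^\ast}=V_{q^\ast}=0$ makes the entire $q^\ast$-term in $F$ vanish identically, the remaining sum still satisfies $F\equiv 0$; iterating this convex-layer peeling finitely many times yields the identity $n_3 q_j W_{q_j}=\Gamma_{q_j}(n_1,n_2)$ for every projection.

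To finish, apply \autoref{def:dt_asym_points} with $\xi\times\eta$ to obtain two points $p_{j_1},p_{j_2}\in P$ whose projections $q_{j_1},q_{j_2}$ are nonzero and non-parallel (by \autoref{cond:DT_asym_points_cond1}) and which satisfy $\gamma_{j_\ell}\ne 0$ (by \autoref{cond:DT_asym_points_cond2}); each forms its own singleton group, so $W_{q_{j_\ell}}=w_{j_\ell}$ and $\Gamma_{q_{j_\ell}}=w_{j_\ell}\gamma_{j_\ell}$, and the derived relation collapses to $n_3 q_{j_\ell}=\gamma_{j_\ell}(n_1,n_2)$. If $n_3=0$, then $\nu\ne 0$ forces $(n_1,n_2)\ne 0$, so $\gamma_{j_\ell}(n_1,n_2)=0$ contradicts $\gamma_{j_\ell}\ne 0$; if $n_3\ne 0$, then $(n_1,n_2)\ne 0$ as well (otherwise $q_{j_\ell}=0$), and both $q_{j_1},q_{j_2}$ become nonzero scalar multiples of $(n_1,n_2)$, hence parallel, contradicting \autoref{cond:DT_asym_points_cond1}. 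The main obstacle is the asymptotic/peeling step -- fixing the analytic branch of $h$, invoking the identity principle to extend $F\equiv 0$ off the real axis, and ensuring the convex-layer peeling terminates and exhausts every projection -- but once set up, each ingredient is a standard finite-dimensional argument.
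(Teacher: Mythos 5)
Your proof is correct, and while the overall skeleton matches the paper's (verify membership in $\mathcal O$, factor $\hat\Psi=\hat\psi\cdot\sum_jw_j\e^{-\i\inner{p_j}{\cdot}}$, kill the $\nabla\hat\psi$ term by radiality, reduce to a vanishing exponential sum, and apply the two conditions of \autoref{def:dt_asym_points} in the direction $\xi\times\eta$), the analytic core is genuinely different. The paper delegates the key step to \autoref{thm:dt_compl_exp_linindep}, a linear-independence statement for the five functions $\mu q_j(\mu)$, $h(\mu)q_j(\mu)$ ($j=1,2$) and the lumped remainder, which it proves by extending past the branch cut to the real ray $(k_0,\infty)$, ordering the $\eta$-components and invoking Bohr's theory of almost periodic functions (a nonvanishing trigonometric polynomial cannot tend to zero at infinity). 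You instead run a dominant-exponential argument along complex rays $r\e^{\i\theta}$, isolating the vertex of the convex hull of the projected frequency vectors and peeling layer by layer; this avoids almost periodicity entirely, is self-contained, and actually yields the stronger conclusion $U_q=V_q=0$ for \emph{every} group of points sharing a projection onto $\spn\{\xi,\eta\}$, whereas only the two singleton groups furnished by \autoref{def:dt_asym_points} are needed for the contradiction. The price is the bookkeeping you flag yourself: one must check that for each remaining finite set of projections some $\theta\in(0,\pi)$ gives a unique maximiser of $q\mapsto\inner{q}{(\sin\theta,-\cos\theta)}$ (true for all but finitely many $\theta$, since the available directions sweep an open half-circle), that the $O(1)$ corrections to the exponent and the unimodular oscillating phase do not spoil the limit (they do not, since one takes absolute values and $U_q,V_q$ are real), and that the peeling removes one projection per step and hence terminates after exhausting all of them. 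Your closing case analysis on $n_3$ is an equivalent coordinate reformulation of the paper's dichotomy $\nu\in\spn\{\xi,\eta\}$ versus $\nu\notin\spn\{\xi,\eta\}$.
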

\begin{proof}
	We first note that $\Psi$ is indeed contained in the set of admissible functions $\mathcal{O}$ as introduced in \autoref{def:admis_obj}, since we have
	\[\int_{\R^3}x\Psi(x)\d x = \left(\int_{\R^3}x\psi(x)\d x\right)\sum_{j=1}^{N} w_j+\left(\int_{\R^3}\psi(x)\d x\right) \sum_{j=1}^{N} w_j p_j=0 \]
	due to $\psi$ being radial and due to the definition of the weights $w_j$.

	The Fourier transform $\hat\Psi$ of $\Psi$ and its gradient can be calculated explicitly and we find
	\begin{align}
			&\hat\Psi(k)=\hat\psi(k)\sum_{j=1}^N w_j \e^{-\i\inner{p_j}{k}}\text{ and}\label{eq:dt_psi_fourier}\\
			&\nabla\hat\Psi(k)=\nabla\hat\psi(k)\sum_{j=1}^N w_j \e^{-\i\inner{p_j}{k}}-\i\hat\psi(k)\sum_{j=1}^N w_j \e^{-\i\inner{p_j}{k}}p_j.\label{eq:dt_psi_fourier_grad}
	\end{align}
	
	Let us assume by contradiction that $\Psi$ was DT-symmetric so that we could find $\xi$,$\eta,\nu\in S^2$ with $\inner\xi\eta=0$ such that
	\begin{equation}\label{eq:dt_dist_cond}
		\inner{\nabla \hat \Psi(\mu\xi+h(\mu)\eta)}{(\mu\xi+h(\mu)\eta)\times\nu}=0 \text{ for all }\mu\in(-k_0,k_0).
	\end{equation}
	Since $\psi$ is radial, which implies that $\nabla\hat\psi(\mu\xi + h(\mu)\eta)$ is parallel to $\mu\xi + h(\mu)\eta$, we have that
		\[\inner{\nabla\hat\psi(\mu\xi + h(\mu)\eta)}{(\mu\xi + h(\mu)\eta) \times \nu}=0.\]
	Substituting the \autoref{eq:dt_psi_fourier_grad} for the gradient of $\hat\Psi$ into \autoref{eq:dt_dist_cond} would hence yield
		\[	\hat\psi(\mu\xi + h(\mu)\eta) \sum_{j=1}^N w_j \e^{-\i\inner{p_j}{\mu\xi + h(\mu)\eta}} \inner{p_j}{(\mu\xi + h(\mu)\eta) \times \nu} = 0. 	\]
	Furthermore, since $\psi$ is radial and has compact support, its Fourier transform $\hat\psi$ is radial and analytic. Consequently, there exists a nonempty interval $I \subset (-k_0, k_0)$ such that $\hat\psi(\mu\xi + h(\mu)\eta) \neq 0$ for all $\mu \in I$. (Otherwise, we would have an accumulation point of zeros of $\mu\mapsto\hat\psi(\mu\xi+h(\mu)\eta)$ so that the function would vanish by analyticity on the whole interval $(-k_0,k_0)$, which means that the radial, analytic function $\hat\psi$ would vanish in some nonempty open spherical shell and therefore everywhere.) Restricting the equation to this interval $I$, we would thus have that
		\begin{equation}\label{eq:dt_dist_cond_red}
			\sum_{j=1}^{N}w_j\inner{p_j}{\xi\times\nu}\mu  q_j(\mu)+\sum_{j=1}^{N}w_j\inner{p_j}{\eta\times\nu}h(\mu) q_j(\mu)=0\text{ for all }\mu\in I,
		\end{equation}
	where we defined $q_j(\mu)\coloneqq\e^{-\i(\inner{p_j}\xi\mu+\inner{p_j}\eta h(\mu))}$.

	Based on the DT-asymmetry property of $P$, we now choose two points $p_1(\xi\times\eta),p_2(\xi\times\eta)\in P$ satisfying the conditions given in \autoref{cond:DT_asym_points_cond1} and \autoref{cond:DT_asym_points_cond2} in \autoref{def:dt_asym_points} with respect to the direction $\xi\times\eta$, that is, we impose for each $j\in\{1,2\}$ that there does not exist any point $p\in P\setminus\{p_j(\xi\times\eta)\}$ and value $c\in\R$ with
		\begin{equation}\label{cond:DT_asym_points_cond1_fixed_dir}
			\pi_{\xi\times\eta}(p_j(\xi\times\eta))= c \pi_{\xi\times\eta}(p),
		\end{equation}
	where we define the projection $\pi_{\xi\times\eta}$ as in \autoref{eq:det_orth_proj}, and that $p_j(\xi\times\eta)$ is not orthogonal to $\xi\times\eta$:
		\begin{equation}\label{cond:DT_asym_points_cond2_fixed_dir}
			\inner{p_j(\xi\times\eta)}{\xi\times\eta}\neq 0.
		\end{equation}
To simplify the notation, let us assume that the elements in $P$ are numbered such that $p_1(\xi\times\eta)=p_1$ and $p_2(\xi\times\eta)=p_2$.

Since the orthogonal projections of $p_1$ and $p_2$ with respect to $\xi\times\eta$ are then by the first condition not parallel to any other nonzero orthogonal projection of a point from $P$, neither of the two vectors $(\inner{p_1}{\xi},\inner{p_1}{\eta})$ and $(\inner{p_2}{\xi},\inner{p_2}{\eta})$ is equal to any other vector $(\inner{p}{\xi},\inner p{\eta})$, $p\in P\setminus\{p_1,p_2\}$.
From \autoref{thm:dt_compl_exp_linindep}, we thus get that the five functions
\[ \big(\mu\mapsto \mu q_j(\mu)\big)_{j=1}^2,\;\big(\mu\mapsto h(\mu)q_j(\mu)\big)_{j=1}^2,\text{ and }\mu\mapsto\sum_{j=3}^N w_j\big(\inner{p_j}{\xi\times\nu}\mu+\inner{p_j}{\eta\times\nu}h(\mu)\big)q_j(\mu) \]
are linearly independent on $I$. \autoref{eq:dt_dist_cond_red} can therefore only hold if
		\begin{equation}\label{eq:dist_coeff_vanish}
			\inner{p_1}{\xi\times\nu}=\inner{p_2}{\xi\times\nu}=\inner{p_1}{\eta\times\nu}=\inner{p_2}{\eta\times\nu}=0.
		\end{equation}
	\begin{itemize}
		\item If $\nu\notin\spn\{\xi,\eta\}$, the vectors $\xi,\eta,\nu\in S^2$ are linearly independent and \autoref{eq:dist_coeff_vanish} would imply that $p_1$ and $p_2$ are parallel to $\nu$. But then also $\pi_{\xi\times\eta}(p_1)$ and $\pi_{\xi\times\eta}(p_2)$ would be parallel, contradicting the condition that there should be no $p\in P\setminus\{p_j\}$ fulfilling \autoref{cond:DT_asym_points_cond1_fixed_dir} for $j\in\{1,2\}$.

		\item If $\nu\in\spn\{\xi,\eta\}$, we can write it as $\nu=\alpha\xi+\beta\eta\in S^2$ for some $\alpha,\beta\in\R$ so that $\xi\times\nu= \beta (\xi\times\eta)$ and $\eta\times\nu= - \alpha (\xi\times\eta)$. \autoref{eq:dist_coeff_vanish} then reduces to
				\[\inner{p_1}{\xi\times\eta}=\inner{p_2}{\xi\times\eta}=0, \]
			which, however, contradicts the property from \autoref{cond:DT_asym_points_cond2_fixed_dir}. 
	\end{itemize}
	Thus, $\Psi$ cannot satisfy \autoref{eq:dt_dist_cond} and hence has to be DT-asymmetric.
\end{proof}

\subsection{The set of DT-symmetric functions}

As there exist DT-asymmetric functions with arbitrarily small norm, as we have seen in \autoref{thm:dt_def_dist}, the slightest perturbation of the zero function, which is as DT-symmetric as it can be, can make it DT-asymmetric. It therefore seems plausible that any DT-symmetric function can be transformed into a DT-asymmetric function by an arbitrarily small distortion, which would mean that the set $\mathcal S_{\mathrm{DT}}$ only consists of boundary points.

On the other hand, we always have a small neighborhood consisting only of DT-asymmetric functions around every DT-asymmetric function.

\begin{lemma}\label{thm:dt_S_closed}
	The set $\mathcal{S}_{\mathrm{DT}}$ is a closed subset of $\mathcal{O}$.
\end{lemma}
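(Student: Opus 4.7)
The plan is to take a converging sequence $(f_n)_{n=1}^\infty \subset \mathcal{S}_{\mathrm{DT}}$ with limit $f \in \mathcal{O}$ (the limit indeed lies in $\mathcal O$ by \autoref{prop:admissible_obj_closed}) and show that $f$ itself satisfies the DT-symmetry condition from \autoref{def:dt_symmetry}. By hypothesis, for each $n$ there exist unit vectors $\xi_n,\eta_n,\nu_n\in S^2$ with $\inner{\xi_n}{\eta_n}=0$ such that
\[ \inner{\nabla \hat f_n(\mu \xi_n + h(\mu) \eta_n)}{(\mu \xi_n + h(\mu) \eta_n)\times \nu_n} = 0 \text{ for all } \mu\in(-k_0,k_0). \]
Since $S^2$ is compact, I pass to a subsequence (without relabeling) along which $\xi_n\to\xi$, $\eta_n\to\eta$, and $\nu_n\to\nu$ in $S^2$, and the orthogonality $\inner{\xi}{\eta}=0$ is preserved by continuity of the inner product.

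The crux of the argument is to upgrade the $L^2$-convergence $f_n\to f$ to uniform convergence $\nabla\hat f_n\to\nabla\hat f$ on all of $\R^3$. Here the compact support built into $\mathcal O$ enters decisively: for each coordinate $j\in\{1,2,3\}$, the function $x\mapsto x_j(f_n-f)(x)$ is supported in $\B^3_{r_{\mathrm s}}$, so by Cauchy--Schwarz
\[ \norm{x_j(f_n-f)}_{L^1(\R^3)}\leq r_{\mathrm s}\,\abs{\B^3_{r_{\mathrm s}}}^{1/2}\,\norm{f_n-f}_{L^2(\B^3_{r_{\mathrm s}})}. \]
Combined with the identity $\partial_j\hat f_n(k)=-\i\,\widehat{(x_j f_n)}(k)$ and the trivial bound $\norm{\widehat g}_{L^\infty(\R^3)}\leq(2\pi)^{-3/2}\norm g_{L^1(\R^3)}$, this gives
\[ \norm{\partial_j\hat f_n-\partial_j\hat f}_{L^\infty(\R^3)}\to 0\text{ as }n\to\infty, \]
and in particular each $\nabla\hat f_n$ and $\nabla\hat f$ is continuous (indeed real-analytic) on $\R^3$.

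With both convergences in hand, I pass to the limit in the DT-symmetry identity. For every fixed $\mu\in(-k_0,k_0)$, the point $\mu\xi_n+h(\mu)\eta_n$ converges to $\mu\xi+h(\mu)\eta$, and the combination of uniform convergence of $\nabla\hat f_n$ with continuity of $\nabla\hat f$ yields $\nabla\hat f_n(\mu\xi_n+h(\mu)\eta_n)\to\nabla\hat f(\mu\xi+h(\mu)\eta)$; simultaneously $(\mu\xi_n+h(\mu)\eta_n)\times\nu_n\to(\mu\xi+h(\mu)\eta)\times\nu$. Continuity of the inner product then delivers
\[ \inner{\nabla\hat f(\mu\xi+h(\mu)\eta)}{(\mu\xi+h(\mu)\eta)\times\nu}=0\text{ for all }\mu\in(-k_0,k_0), \]
so $f\in\mathcal S_{\mathrm{DT}}$. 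The main obstacle is precisely the uniform gradient convergence: plain $L^2$-convergence of $f_n$ to $f$ does not by itself imply pointwise convergence of $\nabla\hat f_n$ on a lower-dimensional set like the semicircle $\mu\mapsto\mu\xi+h(\mu)\eta$, so it is essential to exploit the compact-support constraint encoded in $\mathcal O$ to turn $L^2$-convergence of the objects into uniform convergence of their Fourier gradients.
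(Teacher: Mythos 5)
Your proposal is correct and follows essentially the same route as the paper: extract convergent subsequences of $(\xi_n,\eta_n,\nu_n)$ by compactness of $S^2$, upgrade $L^2$-convergence of $f_n$ to uniform convergence of $\nabla\hat f_n$ via the compact support and Cauchy--Schwarz, and pass to the limit in the DT-symmetry identity. The only cosmetic difference is that you bound $\norm{x}$ by $r_{\mathrm s}$ before applying Cauchy--Schwarz, whereas the paper keeps $\norm{x}^2$ inside the integral; the estimate is the same in substance.
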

\begin{proof}
We have already observed in \autoref{prop:admissible_obj_closed} that $\mathcal{O}$ is a closed subspace of $L^2(\B^3_{r_{\mathrm s}})$. So, let $(f_j)_{j=1}^\infty\subset\mathcal{S}_{\mathrm{DT}}$ be a converging sequence in $L^2(\B_{r_{\mathrm s}}^3)$ with limit $f\in\mathcal O$. The sequence $(\nabla\hat f_j)_{j=1}^\infty$ of the gradients of the Fourier transforms $\hat f_j$ of the functions $f_j$ then converges uniformly to the gradient $\nabla\hat f$ of the Fourier transform $\hat f$ of $f$, since we have
\[ \sup_{k\in\R^3}\norm{\nabla \hat f_j(k)-\nabla \hat f(k)} \le \frac1{(2\pi)^{\frac32}}\int_{\B^3_{r_{\mathrm s}}}\norm x\abs{f_j(x)-f(x)}\d x \le \frac1{(2\pi)^{\frac32}}\left(\int_{\B^3_{r_{\mathrm s}}}\norm x^2\d x\right)^{\frac12}\norm{f_j-f}_{L^2}. \]

Since the functions $f_j$ are by assumption DT-symmetric, we find according to \autoref{def:dt_symmetry} for each $j\in\N$ vectors $\xi_j,\eta_j,\nu_j\in S^2$ with $\left<\xi_j,\eta_j\right>=0$ such that we have
\begin{equation}
\inner{\nabla\hat f_j(\mu\xi_j+h(\mu)\eta_j)}{(\mu\xi_j+h(\mu)\eta_j)\times\nu_j}=0\text{ for all }\mu\in(-k_0,k_0).\label{eq:dt_cond}
\end{equation}
By the compactness of $S^2$, there exist converging subsequences $(\xi_{j_\ell})_{\ell=1}^\infty$, $(\eta_{j_\ell})_{\ell=1}^\infty$, and $(\nu_{j_\ell})_{\ell=1}^\infty$ with limits $\xi\coloneqq\lim_{\ell\to\infty}\xi_{j_\ell}$, $\eta\coloneqq\lim_{\ell\to\infty}\eta_{j_\ell}$ and $\nu\coloneqq\lim_{\ell\to\infty}\nu_{j_\ell}$.

The corresponding functions $f_{j_\ell}$, $\ell\in\N$, all satisfy \autoref{eq:dt_cond} and the uniform convergence of $\nabla\hat f_{j_\ell}$ implies that we have for every $\mu\in(-k_0,k_0)$ in the limit
\[ 0 = \lim_{\ell\to\infty}\inner{\nabla\hat f_{j_\ell}(\mu\xi_{j_\ell}+h(\mu)\eta_{j_\ell})}{(\mu\xi_{j_\ell}+h(\mu)\eta_{j_\ell})\times\nu_{j_\ell}} = \inner{\nabla\hat f(\mu\xi+h(\mu)\eta)}{(\mu\xi+h(\mu)\eta)\times\nu} \]
so that we also have $f\in\mathcal S_{\mathrm{DT}}$.
\end{proof}

If we can thus indeed show that $\mathcal S_{\mathrm{DT}}=\partial\mathcal S_{\mathrm{DT}}$, we have that $\mathcal S_{\mathrm{DT}}\subset\mathcal O$ is nowhere dense so that the infinitesimal common circle method is guaranteed to give us a unique reconstruction of the rotational motion for a generic object.

\begin{definition}\label{def:generic}
A subset $U$ of a topological space $X$ is called nowhere dense if the closure of $U$ has empty interior. We call a subset $V$ of $X$ generic if its complement $X\setminus V$ is nowhere dense in $X$.
\end{definition}  

To prove this, we will add a suitably constructed DT-asymmetric object composed of finitely many small point masses as in \autoref{section:DT_asym_construction} to a given DT-symmetric function to destroy its DT-symmetry.

\begin{theorem}\label{thm:dt_S_nowheredense}
	The set $\mathcal{S}_{\mathrm{DT}}$ is nowhere dense in $\mathcal{O}$.
\end{theorem}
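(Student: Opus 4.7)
By \autoref{thm:dt_S_closed}, $\mathcal S_{\mathrm{DT}}$ is closed in $\mathcal O$, so it suffices to prove that $\mathcal S_{\mathrm{DT}}$ has empty interior. Fix therefore $f\in\mathcal S_{\mathrm{DT}}$ and $\varepsilon>0$; the goal is to exhibit some $\Psi\in\mathcal O$ with $\norm{\Psi}_{L^2}<\varepsilon$ such that $f+\Psi\notin\mathcal S_{\mathrm{DT}}$.

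As perturbations I use the DT-asymmetric point-mass approximations $\Psi_P$ from \autoref{thm:dt_def_dist}. By \autoref{rem:dt_asym_points_construction} and \autoref{thm:dt_weights}, these can be produced with $P=(p_1,\ldots,p_N)$ ranging over an open set $U\subset(\B_{r_{\mathrm s}}^3\setminus\{0\})^N$ of DT-asymmetric configurations (with $N\ge 8$), equipped with continuously varying weights $w_j(P)\ne 0$ satisfying $\sum_j w_j(P)p_j=0$. Fixing a nonzero radial bump $\psi$ with sufficiently small support and amplitude so that $\norm{\Psi_P}_{L^2}<\varepsilon$ uniformly in a neighborhood $W\subset U$ of some chosen $P_0$, \autoref{thm:dt_def_dist} guarantees that every such $\Psi_P$ is DT-asymmetric.

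Suppose for contradiction that $f+\Psi_P\in\mathcal S_{\mathrm{DT}}$ for every $P\in W$. For each such $P$ there exist unit vectors $(\xi_P,\eta_P,\nu_P)$ with $\inner{\xi_P}{\eta_P}=0$ such that, writing $k(\mu)=\mu\xi_P+h(\mu)\eta_P$ and using the radial symmetry of $\psi$ exactly as in the proof of \autoref{thm:dt_def_dist} to annihilate the $\nabla\hat\psi$ contribution,
\[
\inner{\nabla\hat f(k(\mu))}{k(\mu)\times\nu_P}=\i\,\hat\psi(k(\mu))\sum_{j=1}^{N} w_j(P)\,\e^{-\i\inner{p_j}{k(\mu)}}\bigl[\mu\inner{p_j}{\xi_P\times\nu_P}+h(\mu)\inner{p_j}{\eta_P\times\nu_P}\bigr]
\]
holds for every $\mu\in(-k_0,k_0)$.

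The main obstacle is that the assignment $P\mapsto(\xi_P,\eta_P,\nu_P)$ need not be continuous, which blocks direct differentiation of the above identity in $P$. My plan to circumvent this is a Baire-type argument: covering the compact space $S^2\times S^2\times S^2$ by countably many closed balls $V_n$ of small diameter, the closed sets $\set{P\in W}{\text{some matching parameters lie in }V_n}$ cover $W$, so the Baire category theorem (applied in the finite-dimensional open set $W$) produces an open subset $W'\subset W$ and limiting parameters $(\xi_*,\eta_*,\nu_*)$ near which every $P\in W'$ admits matching parameters. Inside $W'$ I then vary a single point $p_1$ analytically along a curve $s\mapsto p_1(s)$, keeping the other $p_j$ fixed and adjusting the weights continuously via \autoref{thm:dt_weights}; the resulting solution set in the real-analytic variable $(s,\xi,\eta,\nu)$ is closed and real-analytic, so by semi-analytic stratification it admits a real-analytic selection $s\mapsto(\xi_s,\eta_s,\nu_s)$ on a nonempty subinterval. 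Differentiating the identity along this stratum and invoking \autoref{thm:dt_compl_exp_linindep} exactly as at the end of the proof of \autoref{thm:dt_def_dist} then forces the relations $\inner{p_1(s)}{\xi_s\times\nu_s}=\inner{p_1(s)}{\eta_s\times\nu_s}=0$ to hold identically in $s$, contradicting the freedom in choosing the curve $s\mapsto p_1(s)$ and completing the proof.
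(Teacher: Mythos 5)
Your overall strategy (reduce to empty interior via \autoref{thm:dt_S_closed}, perturb by a small DT-asymmetric point-mass sum as in \autoref{thm:dt_def_dist}, and derive a contradiction from the resulting symmetry identity) matches the paper's, but the core of the argument --- how to rule out the identity
\[
\inner{\nabla\hat f(k(\mu))}{k(\mu)\times\nu_P}=\i\,\hat\psi(k(\mu))\sum_{j} w_j(P)\,\e^{-\i\inner{p_j}{k(\mu)}}\bigl[\mu\inner{p_j}{\xi_P\times\nu_P}+h(\mu)\inner{p_j}{\eta_P\times\nu_P}\bigr]
\]
--- has a genuine gap. In the proof of \autoref{thm:dt_def_dist} the left-hand side is zero, so \autoref{thm:dt_compl_exp_linindep} applies directly and kills the coefficients. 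Here the left-hand side is an arbitrary analytic function determined by the unknown DT-symmetric $f$, and nothing in \autoref{thm:dt_compl_exp_linindep} prevents such a function from coinciding on $I$ with a nontrivial combination of the exponentials $\mu q_j(\mu)$, $h(\mu)q_j(\mu)$. Your plan to eliminate this term by differentiating in the deformation parameter $s$ only works if $(\xi_s,\eta_s,\nu_s)$ is constant in $s$; if it varies, the $s$-derivative of the left-hand side reintroduces $\nabla\hat f$ and $\nabla^2\hat f$ along a moving curve, and you are back where you started. The Baire localization only confines the parameters to a small ball, not to a point, and the ``semi-analytic stratification / real-analytic selection'' step that is supposed to bridge this is asserted rather than proved (the solution set is cut out by infinitely many analytic equations, and a stratum projecting onto a nondegenerate $s$-interval with an analytic section is not automatic). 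Even granting an analytic selection with constant parameters, differentiating the $j=1$ term in $s$ produces functions of the form $\mu^2 q_1(\mu)$, $\mu h(\mu)q_1(\mu)$, $h^2(\mu)q_1(\mu)$, which \autoref{thm:dt_compl_exp_linindep} as stated does not cover.

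The paper closes exactly this gap by a completely different mechanism: it first replaces $f$ by a smooth $g$ with $\supp(g)\subset\B^3_{r_{\mathrm s}-2\varepsilon}$ (\autoref{thm:dt_smooth_approx}), places the point masses on the \emph{larger} sphere $\partial\B^3_{r_{\mathrm s}-\varepsilon}$ with a maximum-projection property (\autoref{thm:dt_max_proj}), extends the identity holomorphically in $\mu$, and compares growth rates along complex rays: the Paley--Wiener bound $\e^{(r_{\mathrm s}-2\varepsilon)\norm{\Im H(z)}}$ for $\nabla\hat g$ is beaten by the dominant exponential $\e^{\inner{p_1}{u}\norm{\Im H(z)}}$ with $\inner{p_1}{u}>r_{\mathrm s}-2\varepsilon$. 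Some quantitative separation of this kind between the support of the background function and the location of the perturbation is what your argument is missing; without it, the unknown term cannot be discarded.
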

\begin{proof}
To show that the closed set $\mathcal{S}_{\mathrm{DT}}$ does not have any interior point, we assume by contradiction that there was a DT-symmetric function $f\in\mathcal{S}_{\mathrm{DT}}$ and a radius $\delta>0$ such that the open ball $U_\delta(f)\coloneqq\set{\tilde f\in\mathcal O}{\norm{\tilde f-f}_{L^2}<\delta}$ with radius $\delta$ around $f$ is contained in $\mathcal{S}_{\mathrm{DT}}$. According to \autoref{thm:dt_smooth_approx}, we could then find a parameter $\varepsilon>0$ and a function $g\in C_{\mathrm c}^\infty(\B^3_{r_{\mathrm s}})\cap U_\delta(f)$ with $\supp(g)\subset \B^3_{r_{\mathrm s}-2\varepsilon}$ and select a radius $\tilde\delta>0$ with
		\begin{equation}\label{eq:dt_nested_interior_point}
			U_{\tilde{\delta}}(g)\subset U_\delta(f)\subset\mathcal{S}_{\mathrm{DT}}.
		\end{equation}

	Choosing $\varepsilon$ sufficiently small, we can further find a DT-asymmetric point set $P\subset\R^3$ with some additional properties:
	\begin{enumerate}
		\item \label{dt_prop_sphere} All the points $p\in P$ lie on the sphere $\partial\B_{r_{\mathrm s}-\varepsilon}$ with radius $r_{\mathrm s}-\varepsilon$.
		\item \label{dt_prop_uniform} The points are spread across the surface such that there exists for every direction $u\in S^2$ a point of $P$ in the spherical cap  
		\[ C_\varepsilon(u)\coloneqq\set{x \in \partial \B^3_{r_{\mathrm s}-\varepsilon}}{\inner xu > r_{\mathrm s}-2\varepsilon} \] 
centered at $u$ with height $\varepsilon$.
		\item \label{dt_prop_maximum} The set $P$ satisfies the following maximum projection property: For every choice of directions $\xi,\eta,\nu\in S^2$ with $\inner\xi\eta=0$, there exists a unit vector $u\in \spn\{\xi,\eta\}$ such that we have a corresponding point $p_1(u)\in P\setminus(\spn\{\xi,\eta\}\cup\spn\{\nu\})$ with
\begin{equation}\label{eq:dt_maximum_proj}
\inner{p_1(u)}u>\inner pu\text{ for all }p\in P\setminus\{p_1(u)\}.
\end{equation}
	\end{enumerate}
	Such a point set can be indeed constructed as described in \autoref{rem:dt_asym_points_construction} by iteratively adding points to an existing DT-asymmetric point set in accordance with the conditions established in \autoref{thm:dt_asym_points_existence} until we have a point in every set $C_\varepsilon(u)$, $u\in S^2$. The maximum projection property is then automatically ensured by \autoref{thm:dt_max_proj}.

	Following the construction in \autoref{thm:dt_def_dist}, we pick for the DT-asymmetric point set $P=\{p_j\}_{j=1}^N$ weights $\{w_j\}_{j=1}^N$ in $\R\setminus\{0\}$ with $\sum_{j=1}^{N} w_j p_j=0$ and choose for some $\tilde\varepsilon<\varepsilon$ the indicator function $\psi\coloneqq\bm 1_{\B^3_{\tilde\varepsilon}}$ of the ball with radius $\tilde\varepsilon$. We then consider the function $\Psi_{\tilde\varepsilon}\in\mathcal O$ given by
	\begin{equation}\label{eq:def_of_Psi_eps}
		\Psi_{\tilde\varepsilon}(x)\coloneqq\sum_{j=1}^{N}w_j \bm 1_{\B^3_{\tilde\varepsilon}}(x-p_j),
	\end{equation}
	 which has the same structure as the function in \autoref{eq:dt_def_dist} and is thus according to \autoref{thm:dt_def_dist} DT-asymmetric. Moreover, we make sure to have $\tilde\varepsilon$ selected such that $\norm{\Psi_{\tilde\varepsilon}}_{L^2}\leq\tilde\delta$.
	
We claim now that $g+\Psi_{\tilde\varepsilon}$ is necessarily DT-asymmetric, which will give us a contradiction to \autoref{eq:dt_nested_interior_point} and thus conclude the proof.

We will show this again by contradiction: So let us assume that $g+\Psi_{\tilde\varepsilon}$ was DT-symmetric so that we would have according to \autoref{def:dt_symmetry} some unit vectors $\xi,\eta,\nu\in S^2$ with $\inner{\xi}{\eta}=0$ and
\[\inner{\nabla (\hat g+\hat\Psi_{\tilde\varepsilon})(H(\mu))}{H(\mu)\times\nu}=0\text{ for all }\mu\in(-k_0,k_0),\]
where we define $H(\mu)\coloneqq \mu\xi+h(\mu)\eta$ with $h$ being given by \autoref{eq:dt_def_h}.

Substituting herein the definition of $\Psi_{\tilde\varepsilon}$ from \autoref{eq:def_of_Psi_eps} and calculating the gradient, we would obtain by using that $\inner{\nabla\hat\psi(H(\mu))}{H(\mu)\times\nu}=0$ (as $\psi$ and therefore also $\hat\psi$ are radial functions) the equality
\begin{equation}\label{eq:dt_symmetry_condition}
-\i \hat\psi(H(\mu))\sum_{j=1}^N w_j\inner{p_j}{H(\mu)\times\nu}\e^{-\i\inner{p_j}{H(\mu)}} = -\inner{\nabla\hat g(H(\mu))}{H(\mu)\times\nu}\text{ for all }\mu\in(-k_0,k_0).
\end{equation}

Since the function $h$ holomorphically extends to the complex plane up to a branch cut $\Gamma\subset\C$ between the points $-k_0$ and $k_0$ where the square root in the definition of $h$ vanishes and $\hat\psi$ and $\hat g$ are entire functions (as they are Fourier transforms of compactly supported functions), we can holomorphically extend \autoref{eq:dt_symmetry_condition} to $\C\setminus\Gamma$. We specifically select $\Gamma\coloneqq(-\infty,-k_0]\cup[k_0,\infty)$, which corresponds to using the principal branch of the square root.

Since $\nabla\hat g$ is the Fourier transform of the compactly supported smooth function $\tilde g\in C_{\mathrm c}^\infty(\B_{r_{\mathrm s}}^3)$, $\tilde g(x)\coloneqq-\i xg(x)$, with $\supp(\tilde g)\subset\B_{r_{\mathrm s}-2\varepsilon}^3$, we know from the Paley--Wiener theorem (see \cite[Theorem 7.3.1]{Hoe03}) that there exists a real constant $C>0$ such that we can estimate
		\begin{equation}\label{eq:dt_paley_wiener}
			\abs*{\det\big(\nabla \hat g(H(z)),H(z),\nu\big)}\leq C(1+\norm{H(z)})^{-1}e^{(r_{\mathrm s}-2\varepsilon)\norm{\Im(H(z))}}\text{ for all }z\in\C\setminus\Gamma.
		\end{equation}
		Using this inequality, we would get from \autoref{eq:dt_symmetry_condition} the relation
\begin{equation}\label{eq:dt_symmetry_inequality}
\abs*{\hat\psi(H(z))\sum_{j=1}^N w_jv_j(z)\e^{-\i\inner{\Re(H(z))}{p_j}}\e^{\alpha_j(z)}} \leq \frac C{1+\norm{H(z)}}\text{ for all }z\in\C\setminus\Gamma,
\end{equation}
where we introduced
\[ v_j(z)\coloneqq\det(p_j,H(z),\nu)\text{ and }\alpha_j(z)\coloneqq\inner{p_j}{\Im(H(z))}-(r_{\mathrm s}-2\varepsilon)\norm{\Im(H(z))}. \]

We now choose a direction $u\in S^2\cap\spn\{\xi,\eta\}\setminus\{-\eta,\eta\}$ for which we have, as requested in \autoref{dt_prop_maximum}, a point $p_1(u)\in P\setminus(\spn\{\xi,\eta\}\cup\spn\{\nu\})$ fulfilling \autoref{eq:dt_maximum_proj}. For the sake of a simpler notation, let us assume that the set $P$ is enumerated such that $p_1(u)=p_1$.

According to \autoref{thm:dt_prop_HIm}, we have for every $r\in(k_0,\infty)$ that the function
\[ H_r\colon(-\pi,\pi)\setminus\{0\}\to S^2\cap\spn\{\xi,\eta\}\setminus\{-\eta,\eta\},\;H_r(\varphi)\coloneqq\frac{\Im(H(r\e^{\i\varphi}))}{\norm{\Im(H(r\e^{\i\varphi}))}}, \]
is well-defined and surjective so that we can find a value $\varphi_r\in(-\pi,\pi)\setminus\{0\}$ with $H_r(\varphi_r)=u$. Moreover, we know from \autoref{eq:dt_prop_HIm_asymptotics} that
\[ \lim_{r\to\infty}\norm[\big]{\sin(\varphi_r)\xi+\sgn(\varphi_r)\cos(\varphi_r)\eta-u} = \lim_{r\to\infty}\norm[\big]{\sin(\varphi_r)\xi+\sgn(\varphi_r)\cos(\varphi_r)\eta-H_r(\varphi_r)}=0, \]
which in particular implies that $\lim_{r\to\infty}\sin(\varphi_r)=\inner\xi u\ne0$.

To get the desired contradiction, let us analyze the asymptotic behavior of the left-hand side of \autoref{eq:dt_symmetry_inequality} for $z=r\e^{\i\varphi_r}$ as $r\to\infty$.
\begin{itemize}
\item
For the exponents $\alpha_j$, $j\in\{1,\ldots,N\}$, we find that
\[ \alpha_j(r\e^{\i\varphi_r})=\norm{\Im(H(r\e^{\i\varphi_r}))}\big(\inner{p_j}{H_r(\varphi_r)}-(r_{\mathrm s}-2\varepsilon)\big) = \norm{\Im(H(r\e^{\i\varphi_r}))}\big(\inner{p_j}u-(r_{\mathrm s}-2\varepsilon)\big). \]

And we know from \autoref{dt_prop_uniform} that $\inner{p_1}u>r_{\mathrm s}-2\varepsilon$ so that we get with \autoref{item:dt_prop_H1} from \autoref{thm:dt_prop_H} the limit
\[ \lim_{r\to\infty}\alpha_1(r\e^{\i\varphi_r}) \ge \lim_{r\to\infty}r\abs{\sin(\varphi_r)}\big(\inner{p_1}u-(r_{\mathrm s}-2\varepsilon)\big) = \infty. \]
Moreover, \autoref{eq:dt_maximum_proj} ensures that we have for $j\in\{2,\ldots,N\}$
\[ \lim_{r\to\infty}\left(\alpha_j(r\e^{\i\varphi_r})-\alpha_1(r\e^{\i\varphi_r})\right) \le -\lim_{r\to\infty}r\abs{\sin(\varphi_r)}(\inner{p_1}u-\inner{p_j}u) = -\infty. \]

\item
Plugging in the definition of $H$ and $h$ into the one of $v_j$, $j\in\{1,\ldots,N\}$, we find that
\[ v_j(r\e^{\i\varphi_r}) = r\e^{\i\varphi_r}\inner{p_j}{\xi\times\nu}+\left(\sqrt{k_0^2-r^2\e^{2\i\varphi_r}}-k_0\right)\inner{p_j}{\eta\times\nu}. \]
Taking the limit $r\to\infty$, this becomes
\[ \lim_{r\to\infty}\frac{v_j(r\e^{\i\varphi_r})}{r\e^{\i\varphi_r}}=\inner{p_j}{\xi\times\nu}-\i\sgn(\inner\xi u)\inner{p_j}{\eta\times\nu}, \]
where we use that $\lim_{r\to\infty}\sgn(\sin(\varphi_r))=\sgn(\inner\xi u)$. The limit is nonzero for $j=1$, since $\inner{p_1}{\xi\times\nu}=0=\inner{p_1}{\eta\times\nu}$ would imply that $p_1$ is parallel to $\nu$ which we explicitly excluded. 

\item
Finally, we come to the factor $\hat\psi(H(r\e^{\i\varphi_r}))$, which we can express with the explicit formula for the Fourier transform of the indicator function by
\[ \hat\psi(H(r\e^{\i\varphi_r})) = \sqrt{\frac{2}{\pi}}\frac{\sin\left(\tilde\varepsilon\sigma_r(\varphi_r)\right)-\tilde\varepsilon\sigma_r(\varphi_r)\cos\left(\tilde\varepsilon\sigma_r(\varphi_r)\right)}{\sigma_r^3(\varphi_r)}, \]
where we abbreviated $\sigma_r(\varphi_r)\coloneqq(\sum_{j=1}^3H_j^2(r\e^{\i\varphi_r}))^{\frac12}$.
(The choice of the branch of the square root in $\sigma_r$ is irrelevant because the $\operatorname{sinc}$ function and the cosine are both even.)

We rewrite $\hat\psi(H(r\e^{\i\varphi_r}))$ by expressing the sine and cosine in terms of the exponential function 
	\[\hat\psi(H(r\e^{\i\varphi_r}))=a(r)\e^{\tilde\varepsilon\Im(\sigma_r(\varphi_r))}+b(r)\e^{-\tilde\varepsilon\Im(\sigma_r(\varphi_r))} \]
with the coefficients
\[ a(r)\coloneqq\frac{\i-\tilde{\varepsilon}\sigma_r(\varphi_r)}{\sigma_r^3(\varphi_r)\sqrt{2\pi}}\e^{-\i\tilde\varepsilon\Re(\sigma_r(\varphi_r))}\text{ and }b(r)\coloneqq-\frac{\i+\tilde{\varepsilon}\sigma_r(\varphi_r)}{\sigma_r^3(\varphi_r)\sqrt{2\pi} }\e^{\i\tilde\varepsilon\Re(\sigma_r(\varphi_r))}. \]
From \autoref{item:dt_prop_H4} in \autoref{thm:dt_prop_H}, we know that
\[ \lim_{r\to\infty}\abs*{\frac{\sigma_r(\varphi_r)}{\sqrt{2k_0r}}} = 1 \text{ and }\lim_{r\to\infty}\frac1{\sqrt{2k_0r}}\abs{\Im(\sigma_r(\varphi_r))} = \lim_{r\to\infty}\abs{\Im(\e^{\i(\frac{\varphi_r}2+\sgn(\varphi_r)\frac\pi4)})} \ge\frac1{\sqrt2}, \]
which ensures that the coefficients $a(r)$ and $b(r)$ do not vanish for sufficiently large values of $r\in(k_0,\infty)$ and that one of the summands (depending on the sign of $\Im(\sigma_r(\varphi_r))$) tends to infinity while the other converges to $0$.

Hence, we find that $\lim_{r\to\infty}\abs{\hat\psi(H(r\e^{\i\varphi_r}))} = \infty$.
\end{itemize}

Putting this together, we get that
\begin{align*}
&\lim_{r\to\infty}\abs*{\hat\psi(H(r\e^{\i\varphi_r}))\sum_{j=1}^N w_jv_j(r\e^{\i\varphi_r})\e^{-\i\inner{\Re(H(r\e^{\i\varphi_r}))}{p_j}}\e^{\alpha_j(r\e^{\i\varphi_r})}} \\
&\qquad= \lim_{r\to\infty}\abs*{\hat\psi(H(r\e^{\i\varphi_r}))w_1v_1(r\e^{\i\varphi_r})\e^{\alpha_1(r\e^{\i\varphi_r})}(1+C(r))} = \infty
\end{align*}
because the remaining summands fulfill
\[ C(r)\coloneqq\sum_{j=2}^N \frac{w_jv_j(r\e^{\i\varphi_r})}{w_1v_1(r\e^{\i\varphi_r})}\e^{\i\inner{\Re(H(r\e^{\i\varphi_r}))}{p_1-p_j}}\e^{\alpha_j(r\e^{\i\varphi_r})-\alpha_1(r\e^{\i\varphi_r})} \to 0\;(r\to\infty), \]
which contradicts \autoref{eq:dt_symmetry_inequality}, since the right-hand side therein converges to zero because of \autoref{item:dt_prop_H1} in \autoref{thm:dt_prop_H}.

\end{proof}

\section{Reconstruction for parallel-beam tomography data}\label{section:pb}

We will now turn to the seemingly simpler problem of reconstructing the rotational motion from the measurements in the parallel-beam approximation as derived in \autoref{eq:pb_measurements_Fourier}. We again drop the constant in front.

\begin{definition}
Let $f\in\mathcal{O}$ be an admissible object and $R\colon[0,T]\to\SO(3)$ be a rotational motion. We define the parallel-beam measurements $\hat m$ of $f$ under the rotation $R$ as
\begin{equation}\label{eq:def-measurement-pb}
\hat m\colon[0,T]\times \R^2\to \C,\;\hat m(t,k)\coloneqq \hat f\left(R(t)\begin{pmatrix} k\\0 \end{pmatrix}\right).
\end{equation}
\end{definition}

Although this attempts to model essentially the same experiment as the diffraction tomography measurements, just in a rougher approximation, this further simplification introduces a global symmetry into the problem: The parallel-beam measurements $\hat m$ of an object $f\in\mathcal O$ under a rotational motion $R\colon [0,T]\to\SO(3)$ and the measurements $\hat m_\Sigma$ of the object $f\circ\Sigma$ under the rotational motion $\Sigma R\Sigma$, where $\Sigma\coloneqq\diag(1,1,-1)$ is the reflection at the plane $\set{x\in\R^3}{x_3=0}$, are identical, since we have for all $(t,k)\in[0,T]\times\R^2$ the relation
\begin{equation}\label{eq:equivalence_class}
\hat m(t,k) = \hat f\left(R(t)\begin{pmatrix}k\\0\end{pmatrix}\right) = (\hat f\circ\Sigma)\left(\Sigma R(t)\begin{pmatrix}k\\0\end{pmatrix}\right) = (\hat f\circ\Sigma)\left(\Sigma R(t)\Sigma\begin{pmatrix}k\\0\end{pmatrix}\right) = \hat m_\Sigma(t,k).
\end{equation}
The best we can achieve is therefore a reconstruction of the equivalence class $\{R,\Sigma R\Sigma\}$ of the function $R$.

\begin{problem}\label{pr:pb}
Let $T>0$ be some fixed parameter. Under which conditions on the function $f\in\mathcal O$ and the motion $R\in C^2([0,T];\SO(3))$ with the normalization $R(0)=\I_{3\times3}$, is it possible to uniquely recover the equivalence class $\{R,\Sigma R\Sigma\}$, $\Sigma\coloneqq\diag(1,1,-1)$, from the corresponding parallel-beam measurements $\hat m\colon[0,T]\times\R^2\to\C$?
\end{problem}

The main idea to approach this task is the relation
\begin{equation}\label{eq:common-line}
\hat m\big(t,\lambda P(e_3\times R(t)^\top R(s)e_3)\big) = \hat f\big(\lambda(R(t)e_3)\times(R(s)e_3)\big) = \hat m\big(s,-\lambda P(e_3\times R(s)^\top R(t)e_3)\big)
\end{equation}
for all $\lambda\in\R$, where we write $P(k_1,k_2,k_3)\coloneqq(k_1,k_2)$ for the projection onto the components orthogonal to the illumination direction, 
between measurement data $\hat m(t,\cdot)$ and $\hat m(s,\cdot)$ at two different time steps $s,t\in[0,T]$. This signifies that the values of the function $\hat m(t,\cdot)$ along a one-dimensional subspace coincide with those of the function $\hat m(s,\cdot)$ along a corresponding chosen line through the origin. By detecting this line, its direction provides us directly a condition for the relative orientation $R(t)^\top R(s)$ of the object between the two measurements. This reconstruction approach became famous under the name of the common line method for its application in cryogenic electron microscopy, see \cite{Hee87,Gon88b,SinCoiSigCheShk10,SinShk11}.

In \cite{ElbRitSchSchm20}, we have derived an infinitesimal version of \autoref{eq:common-line}, applicable when the object undergoes a sufficiently smooth motion, by considering the limit $s\to t$. We can then again write \autoref{eq:common-line} as an equation for the angular velocity $\omega\colon[0,T]\to\R$ corresponding to $R$, as introduced in \autoref{def:ang_vel}, which we conveniently represent in cylindrical coordinates $\rho_\omega, \phi_\omega$, and $\zeta_\omega$, where we are using the notation from \autoref{eq:def-cylind}. By expanding \autoref{eq:common-line} in a Taylor polynomial in the variable $s$ around the time $t\in[0,T]$ and equating the coefficients from the first order terms in $(t-s)$, we arrive at the infinitesimal common line equation
	\begin{equation}\label{eq:inf-common-line-D1}
		\partial_t \hat m(t,\lambda \rho_\omega(t) \phi_\omega(t))=\zeta_\omega(t) \inner{\nabla_k \hat m(t,\lambda \rho_\omega(t)\phi_\omega(t))}{\lambda \rho_\omega(t)\phi_\omega^\perp(t)}\text{ for all }\lambda\in\R,
	\end{equation}
for the values $\phi_\omega(t)$ and $\zeta_\omega(t)$ of the azimuth direction and the third component of the angular velocity at the time $t$. For the derivation, we refer to \cite[Proposition 3.7]{ElbRitSchSchm20}.

Although the value $\rho_\omega(t)$ of the cylindrical radius appears in \autoref{eq:inf-common-line-D1}, it can simply be absorbed into the parameter $\lambda$ if $\rho_\omega(t)\neq0$ and we need to consider higher order terms to potentially reconstruct it. If $\rho_\omega(t)=0$, the equation reduces to $\partial_t\hat m(t,0)=0$ which is by the definition of $\hat m$ in \autoref{eq:def-measurement-pb} always true so that we cannot gain any information from \autoref{eq:inf-common-line-D1} in this case.

To reconstruct the cylindrical radius $\rho_\omega$ at some point $t\in[0,T]$ where $\rho_\omega(t)\ne0$, we consider (since the second order terms do not provide any help) the third order terms which yield the equation
	\begin{equation}\label{eq:inf-common-line-D3}
		(\zeta_\omega(t)+\varphi_\omega'(t))\left(a_0(t,\lambda)+a_{02}(t,\lambda)\rho_\omega^2(t)+a_1(t,\lambda)\frac{\rho_\omega'(t)}{\rho_\omega(t)}\right) = 0\text{ for all }\lambda\in\R
	\end{equation}
with the coefficients $a_0,a_{02}$ and $a_1$ depending (besides on the derivatives of $\hat m$) only on the functions $\phi_\omega=({\cos}\circ\varphi_\omega,{\sin}\circ\varphi_\omega)$ and $\zeta_\omega$ which we expect to obtain beforehand from \autoref{eq:inf-common-line-D1}, see \cite[Proposition~3.8]{ElbRitSchSchm20}. These coefficients are explicitly given by
\begin{alignat}{2} 
	&a_0(t,\lambda)
	&&\begin{aligned}[t]
		&\coloneqq\zeta_\omega(t)(\zeta_\omega(t)-\varphi_\omega'(t))\D_k^3\hat m(t,\lambda \phi_\omega(t))\left[\lambda\phi_\omega^\perp(t),\lambda\phi_\omega^\perp(t),\lambda\phi_\omega^\perp(t)\right] \\
		&\qquad+2\D_k^2\hat m(t,\lambda \phi_\omega(t))\left[\lambda\phi_\omega^\perp(t),\lambda\zeta_\omega(t)\varphi_\omega'(t)\phi_\omega(t)-\lambda\zeta_\omega'(t)\phi_\omega^\perp(t)\right] \\
		&\qquad+2\D_k\hat m(t,\lambda \phi_\omega(t))\left[\lambda\zeta_\omega^2(t)\phi_\omega^\perp(t)+\lambda\zeta_\omega'(t)\phi_\omega(t)\right] \\
		&\qquad-(3\zeta_\omega(t)-\varphi_\omega'(t))\partial_t\D_k^2\hat m(t,\lambda \phi_\omega(t))\left[\lambda\phi_\omega^\perp(t),\lambda\phi_\omega^\perp(t)\right] \\
		&\qquad+2\partial_{tt}\D_k\hat m(t,\lambda \phi_\omega(t))\left[\lambda\phi_\omega^\perp(t)\right],
	\end{aligned} \label{eq:inf-cl-a0} \\
	&a_{02}(t,\lambda) &&\coloneqq 2\D_k\hat m(t,\lambda \phi_\omega(t))\left[\lambda\phi_\omega^\perp(t)\right], \label{eq:inf-cl-a02} \\
	&a_1(t,\lambda)
	&&\begin{aligned}[t]
		&\coloneqq 2\Big(\zeta_\omega(t)\D_k^2\hat m(t,\lambda \phi_\omega(t))\left[\lambda\phi_\omega^\perp(t),\lambda \phi_\omega^\perp(t)\right] \\
		&\qquad-\zeta_\omega(t)\D_k\hat m(t,\lambda \phi_\omega(t))\left[ \lambda \phi_\omega(t)\right]-\partial_t\D_k\hat m(t,\lambda \phi_\omega(t))\left[\lambda \phi_\omega^\perp(t)\right]\Big),
	\end{aligned} \label{eq:inf-cl-a1}
\end{alignat}
where we denote by $\D_k^\ell\hat m(t,\kappa)\colon(\R^2)^\ell\to\C$ the derivative of order $\ell\in\N$ of the function $k\mapsto \hat m(t,k)$ at a point $\kappa\in\R^2$ for fixed $t\in[0,T]$ and write the evaluation of the tensor $\D_k^\ell\hat m(t,\kappa)$ at the vectors $(v_j)_{j=1}^\ell\in(\R^2)^\ell$ in the form $\D_k^\ell \hat m(t,\kappa)[(v_j)_{j=1}^\ell]$.

Still, a reconstruction of the value $\rho_\omega(t)$ of the cylindrical radius from \autoref{eq:inf-common-line-D3} will be unfeasible at a time $t\in[0,T]$ where $\zeta_\omega(t)+\varphi_\omega'(t)=0$.

\subsection{Reconstructible rotational motions} 

We will simply ignore the scenarios where \autoref{eq:inf-common-line-D1} or \autoref{eq:inf-common-line-D3} become trivial by considering only time steps $t\in[0,T]$ where $\rho_\omega(t)\ne0$ and $\zeta_\omega(t)+\varphi_\omega'(t)\ne0$. To make this condition more apparent, we will formulate it in a geometrically more intuitive way.

\begin{definition}\label{def:pb_degmotion}
We call a rotational motion $R\in C^2([0,T];\SO(3))$ nondegenerate at a point $t\in[0,T]$ if the vectors $R(t) e_3$, $R'(t) e_3$, and $R''(t)e_3$ are linearly independent.
\end{definition}

\begin{lemma}\label{th:nondeg_D3}
	Let $R\in C^2([0,T];\SO(3))$ be a rotational motion and $\omega$ be its corresponding angular velocity written in cylindrical coordinates according to \autoref{eq:def-cylind}. Then, $R$ is nondegenerate at a point $t\in[0,T]$ if and only if
	\begin{equation}\label{eq:nondeg_D3}
		\zeta_\omega(t)+\varphi_\omega'(t)\ne0 \text{ and }\rho_\omega(t)\ne0.
	\end{equation}
\end{lemma}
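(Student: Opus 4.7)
The plan is to reduce the nondegeneracy condition to a computation in cylindrical coordinates by expressing $R'(t) e_3$ and $R''(t) e_3$ directly in terms of the angular velocity $\omega(t)$ and its derivative $\omega'(t)$.

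First I would use \autoref{def:ang_vel} in the form $R'(t) = R(t) A(t)$, where $A(t)$ denotes the antisymmetric matrix acting as $y\mapsto \omega(t)\times y$, to obtain
\[ R'(t)e_3 = R(t)\bigl(\omega(t)\times e_3\bigr) \]
and, by differentiating once more,
\[ R''(t)e_3 = R(t)\bigl(\omega(t)\times(\omega(t)\times e_3)+\omega'(t)\times e_3\bigr). \]
Since $R(t)\in\SO(3)$ is invertible, linear independence of $R(t)e_3$, $R'(t)e_3$, $R''(t)e_3$ is equivalent to linear independence of the three vectors
\[ e_3,\quad \omega(t)\times e_3,\quad \omega(t)\times(\omega(t)\times e_3)+\omega'(t)\times e_3. \]

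Next I would write $\omega(t)=(\rho_\omega(t)\phi_\omega(t),\zeta_\omega(t))^\top$ in the cylindrical coordinates from \autoref{eq:def-cylind} and explicitly compute $\omega(t)\times e_3 = -\rho_\omega(t)\bigl(\phi_\omega^\perp(t),0\bigr)^\top$. In particular, if $\rho_\omega(t)=0$, this vector vanishes, so nondegeneracy already forces $\rho_\omega(t)\ne0$. Assuming $\rho_\omega(t)\ne0$, the first two vectors span the plane $\operatorname{span}\{e_3,(\phi_\omega^\perp(t),0)^\top\}$ in $\R^3$, whose orthogonal complement is spanned by the unit vector $\tilde\phi_\omega(t)\coloneqq(\phi_\omega(t),0)^\top$. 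Linear independence of all three vectors thus reduces to nonvanishing of the inner product of the third vector with $\tilde\phi_\omega(t)$.

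The main (and essentially only) calculation is then to evaluate
\[ \bigl\langle \omega(t)\times(\omega(t)\times e_3)+\omega'(t)\times e_3,\tilde\phi_\omega(t)\bigr\rangle. \]
Using the BAC--CAB identity, $\omega\times(\omega\times e_3)=\zeta_\omega\omega-\|\omega\|^2 e_3$, whose inner product with $\tilde\phi_\omega$ equals $\zeta_\omega\rho_\omega$. For the remaining term I would differentiate $\omega$ componentwise,
\[ \omega'(t)=\bigl(\rho_\omega'\cos\varphi_\omega-\rho_\omega\varphi_\omega'\sin\varphi_\omega,\;\rho_\omega'\sin\varphi_\omega+\rho_\omega\varphi_\omega'\cos\varphi_\omega,\;\zeta_\omega'\bigr)^\top, \]
compute $\omega'(t)\times e_3$ using $(a,b,c)\times(0,0,1)=(b,-a,0)$, and take the inner product with $(\cos\varphi_\omega,\sin\varphi_\omega,0)^\top$. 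The $\rho_\omega'$ contributions cancel between the two components, leaving exactly $\rho_\omega\varphi_\omega'$. Adding the two contributions yields $\rho_\omega(t)\bigl(\zeta_\omega(t)+\varphi_\omega'(t)\bigr)$, which is nonzero if and only if \autoref{eq:nondeg_D3} holds. Combining this with the earlier observation that $\rho_\omega(t)=0$ already destroys linear independence of the first two vectors establishes the claimed equivalence. There is no real obstacle; the only thing to be careful about is the bookkeeping of signs in the cross products and the cylindrical parametrization, but once the projection direction $\tilde\phi_\omega(t)$ is identified the computation is a one-line trigonometric simplification.
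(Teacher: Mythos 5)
Your proof is correct and follows essentially the same route as the paper: both reduce nondegeneracy via $R'(t)e_3=R(t)(\omega(t)\times e_3)$ and $R''(t)e_3=R(t)(\omega(t)\times(\omega(t)\times e_3)+\omega'(t)\times e_3)$ to a condition on $\omega$ and $\omega'$, and then compute in cylindrical coordinates to obtain $\rho_\omega(\zeta_\omega+\varphi_\omega')\ne0$ (the paper evaluates the full triple product, giving $\rho_\omega^2(\zeta_\omega+\varphi_\omega')$, while you split off the $\rho_\omega=0$ case and project the third vector onto $(\phi_\omega,0)$ — the same calculation up to a factor of $\rho_\omega$). All the cross-product and trigonometric identities you invoke check out.
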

\begin{proof}
	We can write the nondegeneracy of $R$ at $t$ as
	\[ 0\ne\inner{R(t) e_3}{(R'(t) e_3)\times(R''(t) e_3)} = \inner{e_3}{(R(t)^\top R'(t) e_3)\times(R(t)^\top R''(t) e_3)}. \]
	Since we have by the definition of $\omega$, see \autoref{eq:def-omega}, that
	\[ R''(t)x = R'(t)(\omega(t)\times x)+R(t)(\omega'(t)\times x) = R(t)\big(\omega(t)\times(\omega(t)\times x)+\omega'(t)\times x\big) \]
	for every $x\in\R^3$, this becomes
	\[ \inner*{e_3}{(\omega(t)\times e_3)\times\big(\omega(t)\times(\omega(t)\times e_3)+\omega'(t)\times e_3\big)} \ne 0. \]
Applying the vector identity $a\times(b\times c)=\left<a,c\right>b-\left<a,b\right>c$, we get that $(\omega(t)\times e_3)\times(\omega(t)\times(\omega(t)\times e_3))=\norm{\omega(t)\times e_3}^3\omega(t)$ and $(\omega(t)\times e_3)\times(\omega'(t)\times e_3)=-\inner{\omega(t)\times e_3}{\omega'(t)}e_3$ so that the nondegeneracy at the time $t$ is equivalent to
	\[ \norm{\omega(t)\times e_3}^2\inner{e_3}{\omega(t)}+\inner[\big]{e_3\times\omega(t)}{\omega'(t)} \ne 0. \]
	Expressing $\omega$ herein in cylindrical coordinates as defined in \autoref{eq:def-cylind}, this takes with $e_3\times\omega(t)=\rho_\omega(t)(-\sin(\varphi_\omega(t)),\cos(\varphi_\omega(t)),0)$ the form
	\begin{equation*}
		\rho^2_\omega(t)(\zeta_\omega(t)+\varphi_\omega'(t))\ne0,
	\end{equation*}
	which is equivalent to \autoref{eq:nondeg_D3}.
\end{proof}

\begin{remark}
This condition is tightly connected to the one from the (discrete) common line method using \autoref{eq:common-line}. There, we have the issue that the potentially obtainable vectors $P(e_3\times R(t)^\top R(s)e_3)$ and $P(e_3\times R(s)^\top R(t)e_3)$ are not enough to calculate the full matrix $R(t)^\top R(s)$. This can be remedied by looking at three different time steps $t_1,t_2,t_3\in[0,T]$. The six vectors $P(e_3\times R(t_i)^\top R(t_j)e_3)$ with different indices $i,j\in\{1,2,3\}$ are then sufficient to recover the two relative rotations $R(t_2)^\top R(t_1)$ and $R(t_3)^\top R(t_1)$ provided that the vectors $(R(t_j)e_3)_{j=1}^3$ are not linearly dependent, see \cite[Section 3.4.1]{Fra06}.

Now, if the rotationial motion $R\in C^2([0,T];\SO(3))$ is nondegenerate at a time $t\in[0,T]$, we can ensure by choosing three different points $t_j\in[0,T]$ for some sufficiently small $\delta\in(0,\infty)$ such that $\abs{t_j-t}\in[\delta,2\delta]$ for all $j\in\{1,2,3\}$ with the Taylor expansion
\[R(t_j)e_3=R(t)e_3+(t_j-t) R'(t)e_3+\frac{1}{2}(t_j-t)^2 R''(t)e_3+o(\delta^2)\text{ for all }j\in\{1,2,3\}\]
that
\[ \det\big(R(t_1)e_3,R(t_2)e_3,R(t_3)e_3\big)=C\det\big(R(t)e_3,R'(t)e_3,R''(t)e_3\big)+o(\delta^3)\ne 0, \]
where the constant $C$ is given by $C\coloneqq\frac12\det\big((t_j-t)^{i-1}\big)_{i,j=1}^3=\frac12(t_2-t_1)(t_3-t_1)(t_3-t_2)$. Hence, the nondegeneracy of $R$ guarantees the existence of three points $t_1,t_2,t_3\in[0,T]$ for which the vectors $(R(t_j)e_3)_{j=1}^3$ are linearly independent as required for the common line method.
\end{remark}

The prototypical example of a degenerate rotation is one for which $e_3$ is rotated around a single axis perpendicular to the imaging direction.
\begin{example}
Taking an arbitrary direction $v\in S^2$ with $\inner v{e_3}=0$ and two real-valued functions $\rho,\zeta\in C^2([0,T];\R)$ with $\rho(0)=\zeta(0)=0$, we consider a rotation of the form
\begin{equation}\label{eq:firstorder_rotation}
R\colon[0,T]\to\SO(3),\;R(t)\coloneqq \bm{R}_v(\rho(t))\bm{R}_{e_3}(\zeta(t)),
\end{equation}
where we denote by $\bm{R}_w(\alpha)\in \SO(3)$ the rotation around a vector $w\in S^2$ by the angle $\alpha\in\R$, defined by Rodrigues' rotation formula as
\begin{equation}\label{eq:rodrigues}
\bm{R}_w(\alpha)x \coloneqq \inner wxw+\sin(\alpha)\,w\times x-\cos(\alpha)\,w\times(w\times x)\text{ for all }x\in\R^3.
\end{equation}

Then, the motion $R$ is degenerate at every point $t\in[0,T]$. Indeed, remarking that $\bm{R}_w'(\alpha)x=\bm R_w(\alpha)(w\times x)$, we find that
\begin{align*}
&\det\big(R(t)e_3,R'(t)e_3,R''(t)e_3\big) \\
&\qquad= (\rho')^3(t)\det\big(\bm R_v(\rho(t))e_3,\bm R_v(\rho(t))(v\times e_3),\bm R_v(\rho(t))(v\times(v\times e_3))\big) = 0,
\end{align*}
since $v\times(v\times e_3)=-e_3$.
\end{example}

This limitation to non-degenerate motions might, of course, be only a limitation of the applied common line method and not of the problem itself. We want to give at least an indication that this non-uniqueness of the reconstruction results for the degenerate rotation $R$ considered in \autoref{eq:firstorder_rotation} arises from an intrinsic symmetry inherent in the data.

\begin{lemma}\label{thm:firstorder_nonuniq}
	Let $f\in\mathcal O\cap C^\infty_{\mathrm c}(\B^3_{r_{\mathrm s}})$, $v\in S^2$ with $\inner v{e_3}=0$, and $\rho,\tilde\rho,\zeta\in C^1([0,T];\R)$ be three real-valued, strictly increasing functions with $\rho(0)=\tilde\rho(0)=\zeta(0)=0$. We consider the two rotations $R,\tilde R\in C^2([0,T];\SO(3))$ given by
\[ R(t)\coloneqq \bm{R}_v(\rho(t))\bm{R}_{e_3}(\zeta(t))\text{ and }
\tilde R(t)\coloneqq \bm{R}_v(\tilde\rho(t))\bm{R}_{e_3}(\zeta(t)). \]

We then find a parameter $\varepsilon\in(0,T)$ and a function $g\in L^2(\R^3)$ such that the parallel-beam measurements $\hat m$ of $f$ under the rotation $R$ fulfill
\begin{equation}\label{eq:nonuniq}
\hat m(t,k)=\hat f\left(R(t)\begin{pmatrix} k\\0 \end{pmatrix}\right)=\hat g\left(\tilde R(t)\begin{pmatrix} k\\0 \end{pmatrix}\right)\text{ for all }t\in[0,\varepsilon),\;k\in\R^2.
\end{equation}
\end{lemma}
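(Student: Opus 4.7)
The key observation is that the rotation $\bm{R}_{e_3}(\zeta(t))$ fixes the plane $e_3^\perp$ pointwise, so $R(t) e_3^\perp = \bm{R}_v(\rho(t)) e_3^\perp$ and $\tilde R(t) e_3^\perp = \bm{R}_v(\tilde\rho(t)) e_3^\perp$, and both measurement families sample the respective Fourier transforms only on one-parameter families of planes rotating around the axis $\R v$. Substituting $y \coloneqq \bm{R}_{e_3}(\zeta(t)) \begin{pmatrix} k \\ 0 \end{pmatrix} \in e_3^\perp$, the required identity \autoref{eq:nonuniq} reduces to
\[ \hat g\big(\bm{R}_v(\tilde\rho(t)) y\big) = \hat f\big(\bm{R}_v(\rho(t)) y\big) \text{ for all } t \in [0, \varepsilon),\; y \in e_3^\perp. \]
I therefore plan to prescribe $\hat g$ on each plane $\bm{R}_v(\beta) e_3^\perp$, for $\beta$ in a neighborhood of $0$, to be the values of $\hat f$ on the corresponding plane $\bm{R}_v(\sigma(\beta)) e_3^\perp$, where $\sigma \coloneqq \rho \circ \tilde\rho^{-1}$.

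Concretely, choose $\varepsilon \in (0, T)$ small enough that $\rho(\varepsilon), \tilde\rho(\varepsilon) \in (0, \pi/2)$; since $\tilde\rho$ is continuous and strictly increasing with $\tilde\rho(0) = 0$, it admits a continuous inverse on $[0, \tilde\rho(\varepsilon))$, and $\sigma \colon [0, \tilde\rho(\varepsilon)) \to [0, \rho(\varepsilon))$ is a continuous strictly increasing bijection with $\sigma(0) = 0$. Setting $w_1 \coloneqq e_3 \times v \in S^2$, the triple $(v, w_1, e_3)$ is orthonormal and every $z \in \R^3$ has unique cylindrical coordinates $z = \zeta v + r\cos(\theta) w_1 + r\sin(\theta) e_3$ with $\zeta \in \R$, $r \geq 0$, $\theta \in [0, 2\pi)$. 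I define
\[ \hat g(\zeta v + r\cos(\theta) w_1 + r\sin(\theta) e_3) \coloneqq \begin{cases} \hat f(\zeta v + r\cos(\sigma(\theta)) w_1 + r\sin(\sigma(\theta)) e_3) & \theta \in [0, \tilde\rho(\varepsilon)), \\ \hat f(\zeta v - r\cos(\sigma(\theta - \pi)) w_1 - r\sin(\sigma(\theta - \pi)) e_3) & \theta \in [\pi, \pi + \tilde\rho(\varepsilon)), \\ 0 & \text{otherwise}, \end{cases} \]
which substitutes $\sigma(\theta)$ for the azimuth angle on the two half-planes where $\hat g$ needs to be prescribed, and sets $\hat g$ to zero elsewhere.

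Two things remain. For $g \in L^2(\R^3)$, I use that $f \in C^\infty_{\mathrm c}(\B^3_{r_{\mathrm s}})$ implies $\hat f$ is Schwartz, whence $\Phi(\tau) \coloneqq \int_0^\infty \int_\R \abs{\hat f(\zeta v + r\cos(\tau) w_1 + r\sin(\tau) e_3)}^2 r \d\zeta \d r$ is uniformly bounded in $\tau \in \R$; integrating over $\theta$ yields $\norm{\hat g}_{L^2(\R^3)}^2 \leq 2 \tilde\rho(\varepsilon) \sup_\tau \Phi(\tau) < \infty$, and hence $g \coloneqq \F^{-1} \hat g \in L^2(\R^3)$ by Plancherel. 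For the identity, I decompose $y = av + b w_1$ with $a = \inner y v$, $b = \inner y{w_1}$; then $\bm{R}_v(\tilde\rho(t)) y$ has cylindrical coordinates $(a, \abs b, \tilde\rho(t))$ if $b \geq 0$ or $(a, \abs b, \tilde\rho(t) + \pi)$ if $b < 0$, so plugging into the definition of $\hat g$ gives $\hat f(av + b\cos(\rho(t)) w_1 + b\sin(\rho(t)) e_3) = \hat f(\bm{R}_v(\rho(t)) y)$ in both cases, using $\sigma(\tilde\rho(t)) = \rho(t)$. The main technical subtlety is the bookkeeping around the $v$-axis, where each plane $\bm{R}_v(\beta) e_3^\perp$ consists of the two azimuthal half-planes $\theta = \beta$ and $\theta = \beta + \pi$ which must be covered consistently by the definition; beyond that the only nontrivial step is the $L^2$ estimate, which relies essentially on the smoothness hypothesis $f \in C^\infty_{\mathrm c}$ to guarantee the required uniform slicewise bound on $\hat f$.
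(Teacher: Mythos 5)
Your proposal is correct and follows essentially the same route as the paper's proof: both transplant the values of $\hat f$ from the plane $\bm{R}_v(\rho(t))e_3^\perp$ onto the plane $\bm{R}_v(\tilde\rho(t))e_3^\perp$ (you via an explicit cylindrical reparametrization about the axis $\R v$ with $\hat g=0$ off the relevant wedge, the paper via a norm-preserving map $\Phi$ and $\hat g=\hat f\circ\Phi$), exploit that all these planes meet in the common line $\spn\{v\}$ where the prescription is consistent, and use the decay of $\hat f$ (Schwartz/Paley--Wiener) to conclude $g\in L^2(\R^3)$. The only blemishes are cosmetic: $\bm{R}_{e_3}(\zeta(t))$ preserves $e_3^\perp$ setwise rather than pointwise (which is all your substitution actually uses), and at $r=0$ the angle $\theta$ is not unique, so you should fix the convention $\hat g(\zeta v)\coloneqq\hat f(\zeta v)$ on the axis to make the displayed case definition single-valued.
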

\begin{proof}
	We want to try to construct a function $g$ fulfilling \autoref{eq:nonuniq} whose Fourier transform has the form $\hat g=\hat f\circ\Phi$ for some function $\Phi\colon\R^3\to\R^3$. \autoref{eq:nonuniq} then becomes with the orthonormal bases $(r_j(t))_{j=1}^3$ and $(\tilde r_j(t))_{j=1}^3$ defined by $r_j(t)\coloneqq R(t) e_j$ and $\tilde r_j(t)\coloneqq\tilde R(t) e_j$ the condition
	\[ \hat f(k_1r_1(t)+k_2r_2(t)) = \hat f(\Phi(k_1\tilde r_1(t)+k_2\tilde r_2(t)))\text{ for all }t\in[0,\varepsilon),\;(k_1,k_2)\in\R^2. \]
	Since we want to have this for a rather arbitrary function $\hat f$, we simply impose that
	\begin{equation}\label{eqPhi}
		k_1r_1(t)+k_2r_2(t) = \Phi(k_1\tilde r_1(t)+k_2\tilde r_2(t))\text{ for all }t\in[0,\varepsilon),\;(k_1,k_2)\in\R^2.
	\end{equation}
	
	Such a function $\Phi$ would clearly have to be linear on all the subspaces 
	\[ E(t) \coloneqq \spn\{\tilde r_1(t),\tilde r_2(t)\} = \set{\kappa\in\R^3}{\inner\kappa{\tilde r_3(t)}=0} = \set{\kappa\in\R^3}{\inner\kappa{\bm R_v(\tilde\rho(t))e_3}=0},\;t\in[0,\varepsilon), \]
	and be uniquely defined on the union of these subspaces by \autoref{eqPhi}.
	
	However, for it to exist, we need to make sure that the thus defined values of $\Phi$ agree on all the intersections $E(t_1)\cap E(t_2)$ for all $t_1,t_2\in[0,\varepsilon)$. But since $v$ is orthogonal to $e_3$, it is for every $t\in[0,\varepsilon)$ also orthogonal to $\bm R_v(\tilde\rho(t)) e_3$, and every plane $E(t)$ therefore contains the line $\spn\{v\}$. We thus have for sufficiently small $\varepsilon>0$ (such that the planes are not identical) that the intersection is for all different values $t_1,t_2\in[0,\varepsilon)$ just the one-dimensional subspace
	\[ E(t_1)\cap E(t_2)=\spn\{v\}. \]
	
	If we express the direction $v$ in the basis $(r_j(t))_{j=1}^3$, we get
\[ v =\sum_{j=1}^2\inner{r_j(t)}vr_j(t) = \sum_{j=1}^2\inner{\bm R_v(\rho(t))\bm R_{e_3}(\zeta(t))e_j}vr_j(t) = \sum_{j=1}^2\inner{\bm R_{e_3}(\zeta(t))e_j}vr_j(t), \]
	and in the same way, the expansion in the basis $(\tilde r_j(t))_{j=1}^3$ is seen to be
\[ v = \sum_{j=1}^2\inner{\bm R_{e_3}(\zeta(t))e_j}v\tilde r_j(t). \]
	
	This means that we have for all $k_1,k_2\in\R$ with $k_1\tilde r_1(t)+k_2\tilde r_2(t)=\mu v$ for some $\mu\in\R$
	\[ k_1r_1(t)+k_2r_2(t) = \mu v = k_1\tilde r_1(t)+k_2\tilde r_2(t) \]
	so that $\Phi(\kappa) = \kappa$ for all $\kappa\in\spn\{v\}$ is a choice consistent with the definition in every plane $E(t)$, $t\in[0,\varepsilon)$.
	
	There therefore exists a unique function $\Phi\colon\bigcup_{t\in[0,\varepsilon)}E(t)\to\R^3$ fulfilling \autoref{eqPhi}. Furthermore, it is clear from \autoref{eqPhi} that $\Phi$ is continuous and $\norm{\Phi(\kappa)}=\norm\kappa$ for all $\kappa\in\bigcup_{t\in[0,\varepsilon)}E(t)$. We can thus extend $\Phi$ to $\R^3$, where we may conserve these properties so that the function $\hat g\coloneqq\hat f\circ\Phi$ is square integrable (since $\hat f$ is an analytic function which decays faster than any polynomial at infinity according to the Paley--Wiener theorem, see \cite[Theorem 7.3.1]{Hoe03}) and we can define $g$ as its inverse Fourier transform.
\end{proof}

Let us stress that this does not prove non-uniqueness for degenerate rotations, since the Fourier transform $\hat g$ of the function $g$ cannot be guaranteed to be analytic so that $g$ has not necessarily compact support and may thus not be in the set $\mathcal O$ of admissible functions.

\subsection{Objects allowing a unique reconstruction} \label{section:pb-uniqueness-symmetry} 

Even in the case of a nondegenerate true rotation, a reconstruction of the rotation from \autoref{eq:inf-common-line-D1} and \autoref{eq:inf-common-line-D3} is not necessarily possible, since certain symmetries of the object may lead to multiple solutions of these equations. However, there is a geometric criterion which characterizes exactly those objects for which a unique reconstruction of the equivalence class of the motion is possible with this method.

\begin{definition}\label{def:pb-symmetry}
We call a function $f\in\mathcal{O}$ PB-symmetric if there exist directions $\xi,\eta\in S^2$ with $\inner\xi\eta=0$ such that
	\begin{equation}\label{eq:def-pbasym}
		\inner{\nabla \hat f(\lambda\xi)}\eta=0\text{ for all }\lambda\in\R.
	\end{equation}
We call $f$ PB-asymmetric if it is not PB-symmetric and denote by $\mathcal{S}_{\mathrm{PB}}$ the set of admissible PB-symmetric objects. 
\end{definition}
\begin{remark}
	The DT-symmetry condition in \autoref{eq:dt_symmetric} simplifies to the PB-symmetry condition in \autoref{eq:def-pbasym} as $k_0\to\infty$. Indeed, we get in the limit $k_0\to\infty$ for the function $h$, introduced in \autoref{eq:dt_def_h}, at every point $\mu\in\R$ that
	\[ h(\mu)=k_0\left(\sqrt{1-k_0^{-2}\mu^2}-1\right)\to0 \]
	and therefore, because of the smoothness of $\hat f$, that
	\[\inner{\nabla\hat f(\mu\xi+h(\mu)\eta)}{(\mu\xi+h(\mu)\eta)\times\nu}\to\inner{\nabla\hat f(\mu\xi)}{\mu\xi\times\nu} \]
	for arbitrary vectors $\xi,\eta,\nu\in S^2$ with $\inner\xi\eta=0$.

	In particular, since a mirror symmetric object satisfies the DT-symmetry condition for every $k_0>0$, as we have seen in \autoref{ex:dt_mirror_sym}, it is also PB-symmetric.
\end{remark}

The PB-symmetry condition can also be formulated in the spatial domain for $f$ directly.

\begin{lemma}
A function $f\in\mathcal O$ is PB-symmetric if and only if there exist directions $\xi,\eta\in S^2$ with $\inner\xi\eta=0$ such that the function $F\colon\R^3\setminus\{0\}\to\C$, whose values
\[ F(w)\coloneqq\int_{\mathbf E_w}xf(x)\d\mathrm S(x) \]
are the centers of $f$ on the planes $\mathbf E_w\coloneqq\set{x\in\R^3}{\inner{x-w}w=0}$ through $w$ orthogonal to the vector $w\in\R^3\setminus\{0\}$, satisfies
\begin{equation}\label{eq:def-pbasym-spatial}
\inner{F(\lambda\xi)}\eta = 0\text{ for almost all }\lambda\in\R\setminus\{0\}.
\end{equation}
\end{lemma}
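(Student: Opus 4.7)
The plan is to identify the one-variable function $\lambda\mapsto\inner{\nabla\hat f(\lambda\xi)}{\eta}$ with the one-dimensional Fourier transform of the scalar function $t\mapsto\inner{F(t\xi)}{\eta}$, and then to invoke injectivity of the Fourier transform on $L^1(\R)$ to translate the vanishing condition from the frequency side to the spatial side.

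First, I would differentiate $\hat f$ under the integral sign, which is permitted since $f\in L^2(\B^3_{r_{\mathrm s}})$ is compactly supported and hence $x\mapsto xf(x)$ lies in $L^1(\R^3;\C^3)$. This yields
\[ \nabla\hat f(k) = -\i(2\pi)^{-\frac32}\int_{\R^3}xf(x)\e^{-\i\inner{x}{k}}\d x. \]
Taking the inner product with $\eta$ and specializing to $k=\lambda\xi$ produces an integral of $\inner{x}{\eta}f(x)\e^{-\i\lambda\inner{x}{\xi}}$ over $\R^3$.

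Next, I would apply Fubini using the orthogonal decomposition $x=t\xi+y$ with $t=\inner{x}{\xi}\in\R$ and $y\in\xi^\perp$. The inner integral over $\xi^\perp$ at fixed $t$ carries the factor $\inner{t\xi+y}{\eta}$; by $\inner{\xi}{\eta}=0$, the $t\xi$-term drops out and what remains, namely $\int_{\xi^\perp}\inner{y}{\eta}f(t\xi+y)\d\mathrm S(y)$, is exactly $\inner{F(t\xi)}{\eta}$ whenever $t\ne 0$, since the plane $\mathbf E_{t\xi}$ from the definition of $F$ is the affine hyperplane $t\xi+\xi^\perp$ and the $t\xi$-contribution to the center integral again vanishes because of $\inner{\xi}{\eta}=0$. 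The outer $t$-integral then exhibits $\lambda\mapsto\inner{\nabla\hat f(\lambda\xi)}{\eta}$ as, up to a nonzero multiplicative constant, the one-dimensional Fourier transform of $t\mapsto\inner{F(t\xi)}{\eta}$.

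Finally, I would conclude by uniqueness of the Fourier transform on $L^1(\R)$: the function $t\mapsto\inner{F(t\xi)}{\eta}$ is supported in $[-r_{\mathrm s},r_{\mathrm s}]$ and integrable (again by Fubini applied to $xf\in L^1(\R^3)$), so its Fourier transform vanishes on all of $\R$ if and only if the function itself vanishes for almost every $t$. Excluding the single point $t=0$, where $F$ is not defined, yields exactly the stated equivalence. The only technical care needed is the standard Fubini bookkeeping and the a.e.\ finiteness of the fiber integrals; I do not foresee any genuine obstacle beyond this, so I expect the whole argument to be short and essentially a direct Fourier-analytic translation of the definition.
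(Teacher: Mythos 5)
Your proposal is correct and follows essentially the same route as the paper: both identify $\lambda\mapsto\inner{\nabla\hat f(\lambda\xi)}{\eta}$, via differentiation under the integral and a Fubini slicing along $\xi$, as (a nonzero constant times) the one-dimensional Fourier transform of $\mu\mapsto\inner{F(\mu\xi)}{\eta}$, and then conclude by injectivity of the Fourier transform. Your write-up is if anything slightly more explicit about the $L^1$ justification of the final equivalence than the paper's.
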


\begin{proof}
Let $\xi,\eta\in S^2$ be two orthonormal vectors. We then find with $\inner x\xi=\mu$ for all $x\in\mathbf E_{\mu\xi}$ that
\begin{align*}
\inner{\nabla \hat f(\lambda\xi)}{\eta} &= -\frac{\i}{\sqrt[3]{2\pi}} \int_{\R^3}\inner{x}{\eta}f(x)\e^{-\i\lambda\inner{x}{\xi}}\d x \\
&= -\frac{\i}{\sqrt[3]{2\pi}}\int_\R\int_{\mathbf E_{\mu\xi}}\inner{x}{\eta}f(x)\e^{-\i\lambda\inner{x}{\xi}}\d\mathrm S(x)\d\mu
= -\frac{\i}{\sqrt[3]{2\pi}}\int_\R\inner{F(\mu\xi)}\eta\e^{-\i\lambda\mu}\d\mu,
\end{align*}
which means that $\lambda\mapsto\inner{\nabla \hat f(\lambda\xi)}{\eta}$ is the one-dimensional Fourier transform of $\mu\mapsto-\frac\i{2\pi}\inner{F(\mu)}\eta$. In particular, the condition
\[ \inner{\nabla \hat f(\lambda\xi)}{\eta} = 0\text{ for all }\lambda\in\R \]
is therefore equivalent to \autoref{eq:def-pbasym-spatial}.
\end{proof}

Geometrically, this signifies that an object is PB-symmetric if and only if there exist two orthogonal directions $\xi$ and $\eta$ such that the center of mass of every slice taken along $\xi$ is in the linear subspace orthogonal to $\eta$.

We can now verify that the PB-asymmetry is precisely the condition we need to ensure that the first- and third-order infinitesimal common line equations provide a unique reconstruction of the equivalence class of the rotation.

\begin{proposition}\label{thm:pb_uniqueness_firstorder}
Let $f\in\mathcal{O}\setminus\mathcal{S}_{\mathrm{PB}}$ be a PB-asymmetric function and $\hat m$ be the parallel-beam measurements of $f$ under the (not necessarily nondegenerate) rotation $R\in C^1([0,T];\SO(3))$ with associated angular velocity $\omega$ written in the cylindrical coordinates $\rho_\omega, \phi_\omega$, and $\zeta_\omega$ as defined in \autoref{eq:def-cylind}.

We now consider for some $t\in[0,T]$ the first order infinitesimal common line equation
	\begin{equation}\label{eq:PB-uniq_firstOrd_eq}
		\partial_t \hat m(t,\lambda\phi_u)=\zeta_u \inner{\nabla_k \hat m(t,\lambda\phi_u)}{\lambda\phi_u^\perp}\text{ for all }\lambda\in\R
	\end{equation}
for the vector $u\in\R^3$ written in the cylindrical coordinates $\rho_u, \phi_u$, and $\zeta_u$.

\begin{enumerate}
\item
If $\rho_\omega(t)\neq0$, then a vector $u\in\R^3$ is a solution of \autoref{eq:PB-uniq_firstOrd_eq} if and only if
	\[ \zeta_u=\zeta_\omega(t)\text{ and }\inner{\phi_u}{\phi_\omega^\perp(t)}=0. \]
\item
If $\rho_\omega(t)=0$, then a vector $u\in\R^3$ is a solution of \autoref{eq:PB-uniq_firstOrd_eq} if and only if
	\[ \zeta_u=\zeta_\omega(t). \]
\end{enumerate}
\end{proposition}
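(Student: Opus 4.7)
The argument closely follows the template of \autoref{thm:dt_uniqueness}, specialized to the limit $h\equiv0$ in which the DT-symmetry condition degenerates into the PB-symmetry condition. First I would substitute the definition \autoref{eq:def-measurement-pb} of $\hat m$ together with \autoref{def:ang_vel} of the angular velocity into \autoref{eq:PB-uniq_firstOrd_eq}, turning the equation into a statement about $\hat f$ and $R(t)$. Setting $\xi\coloneqq R(t)\begin{pmatrix}\phi_u\\0\end{pmatrix}$ and computing derivatives via the chain rule, this rewrites as
\[
\inner*{\nabla\hat f(\lambda\xi)}{R(t)\left(\omega(t)\times\begin{pmatrix}\lambda\phi_u\\0\end{pmatrix}-\zeta_u\begin{pmatrix}\lambda\phi_u^\perp\\0\end{pmatrix}\right)}=0\text{ for all }\lambda\in\R.
\]
Expanding the cross product in cylindrical coordinates exactly as in the proof of \autoref{thm:dt_uniqueness} (with $h(\mu)$ replaced by $0$), the vector in the second slot collapses to $\lambda\bigl((R(t)(\omega(t)-u))\times\xi\bigr)$, so the equation is equivalent to
\[
\lambda\inner*{\nabla\hat f(\lambda\xi)}{(R(t)(\omega(t)-u))\times\xi}=0\text{ for all }\lambda\in\R.
\]

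Writing $w\coloneqq R(t)(\omega(t)-u)$, I would then apply PB-asymmetry to this reduced form. Arguing by contradiction, suppose $w\times\xi\ne0$; then $\eta\coloneqq(w\times\xi)/\norm{w\times\xi}$ is a unit vector orthogonal to $\xi$, and dividing through by $\lambda\norm{w\times\xi}$ for $\lambda\ne0$ (and passing to the limit $\lambda\to0$ by continuity of $\nabla\hat f$) yields $\inner{\nabla\hat f(\lambda\xi)}\eta=0$ for every $\lambda\in\R$. This is precisely the PB-symmetry condition \autoref{eq:def-pbasym}, contradicting $f\notin\mathcal S_{\mathrm{PB}}$. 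Hence necessarily $w\parallel\xi$, equivalently $\omega(t)-u$ must be a scalar multiple of $\begin{pmatrix}\phi_u\\0\end{pmatrix}$.

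Finally, I would translate this parallelism back into cylindrical coordinates. Writing $\omega(t)-u=(\rho_\omega(t)\phi_\omega(t)-\rho_u\phi_u,\,\zeta_\omega(t)-\zeta_u)$, the vanishing third component forces $\zeta_u=\zeta_\omega(t)$, and the first two components require $\rho_\omega(t)\phi_\omega(t)$ to be parallel to $\phi_u$. If $\rho_\omega(t)\ne0$, this means $\phi_u\parallel\phi_\omega(t)$, and since both lie in $S^1_+$ we conclude $\phi_u=\phi_\omega(t)$, equivalently $\inner{\phi_u}{\phi_\omega^\perp(t)}=0$; if $\rho_\omega(t)=0$, the parallelism is automatic for any $\phi_u$. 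The converse direction is a direct substitution: under the stated cylindrical conditions the vector $\omega(t)-u$ is parallel to $\begin{pmatrix}\phi_u\\0\end{pmatrix}$, hence $w\times\xi=0$ and the reduced equation is satisfied trivially. The only conceptual subtlety beyond the DT case is that here PB-symmetry gives less information than DT-symmetry, so the method cannot pin down $\rho_u$ from first-order data (as the paper already acknowledges by passing to third-order equations); modulo this weaker conclusion, the argument is a clean $h\equiv0$ degeneration of \autoref{thm:dt_uniqueness}.
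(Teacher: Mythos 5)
Your proposal is correct and follows essentially the same route as the paper: substitute the definitions of $\hat m$ and $\omega$, reduce the equation to $\inner{\nabla\hat f(\lambda\xi)}{v}=0$ for a fixed vector $v$ orthogonal to $\xi$, and invoke PB-asymmetry to force $v=0$. The only cosmetic difference is that you package the residual vector as the cross product $\lambda\,(R(t)(\omega(t)-u))\times\xi$ (mirroring the DT argument), whereas the paper writes it out explicitly as $\rho_\omega(t)\inner{\phi_u}{\phi_\omega^\perp(t)}\tilde e_3+(\zeta_\omega(t)-\zeta_u)\tilde e_2$ in an adapted orthonormal basis; the two are the same vector and the conclusion is identical.
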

\begin{proof}
Substituting the definition of $\hat m$ from \autoref{eq:def-measurement-pb} into \autoref{eq:PB-uniq_firstOrd_eq}, we get
\[ \inner*{\nabla\hat f\left(\lambda R(t)\begin{pmatrix}\phi_u\\ 0\end{pmatrix}\right)}{R'(t)\begin{pmatrix}\phi_u\\ 0\end{pmatrix}} = \zeta_u\inner*{\nabla\hat f\left(\lambda R(t)\begin{pmatrix}\phi_u\\ 0\end{pmatrix}\right)}{R(t)\begin{pmatrix}\phi_u^\perp\\ 0\end{pmatrix}}. \]
We then plug in the definition of the angular velocity $\omega$, see \autoref{eq:def-omega}, and obtain
\[ \inner*{\nabla\hat f\left(\lambda R(t)\begin{pmatrix}\phi_u\\ 0\end{pmatrix}\right)}{R(t)\left(\omega(t)\times \begin{pmatrix}\phi_u\\ 0\end{pmatrix}-\zeta_u \begin{pmatrix}\phi_u^\perp\\ 0\end{pmatrix}\right)} = 0. \]
Writing $\omega$ in cylindrical coordinates, this becomes
\begin{equation}\label{eq:PB-uniq_firstOrd__fourier_eq}
	\inner*{\nabla\hat f\left(\lambda R(t)\begin{pmatrix}\phi_u\\ 0\end{pmatrix}\right)}{\rho_\omega(t)\inner{\phi_u}{\phi_\omega^\perp(t)}R(t)e_3+(\zeta_\omega(t)-\zeta_u)R(t)\begin{pmatrix}\phi_u^\perp\\ 0\end{pmatrix}} = 0,
\end{equation}
which shows that every $u\in\R^3$ with $\zeta_u=\zeta_\omega(t)$ and $\rho_\omega(t)\inner{\phi_u}{\phi_\omega^\perp(t)}=0$ is a solution of \autoref{eq:PB-uniq_firstOrd_eq}.

To see that these are all the possible solutions, we introduce the orthonormal basis $(\tilde e_j)_{j=1}^3\subset\R^3$ by
\[ \tilde e_1\coloneqq R(t)\begin{pmatrix}\phi_u\\ 0\end{pmatrix},\;\tilde e_2\coloneqq R(t)\begin{pmatrix}\phi_u^\perp\\ 0\end{pmatrix},\text{ and }\tilde e_3\coloneqq R(t) e_3 \]
so that \autoref{eq:PB-uniq_firstOrd__fourier_eq} takes the form
\begin{equation}\label{eq:PB-uniq_firstOrd_fourier_simpl}
	\inner{\nabla\hat f(\lambda \tilde e_1)}{\rho_\omega(t)\inner{\phi_u}{\phi_\omega^\perp(t)}\tilde e_3+(\zeta_\omega(t)-\zeta_u)\tilde e_2} = 0\text{ for all }\lambda\in\R.
\end{equation}
So, if we had $\zeta_u\ne\zeta_\omega(t)$ or $\rho_\omega(t)\inner{\phi_u}{\phi_\omega^\perp(t)}\ne0$, we would have found with
\[ \eta\coloneqq\rho_\omega(t)\inner{\phi_u}{\phi_\omega^\perp(t)}\tilde e_3+(\zeta_\omega(t)-\zeta_u)\tilde e_2 \]
a nonzero vector perpendicular to $\tilde e_1$ such that $\inner{\hat f(\lambda\tilde e_1)}\eta = 0$ for all $\lambda\in\R$, implying that $f$ is PB-symmetric, which contradicts our assumptions.
\end{proof}

This means that, under the a priori assumption that the measurements $\hat m$ belong to a PB-asymmetric function $f$, we have that the equation
\[ \partial_t \hat m(t,\lambda\phi)=\zeta\inner{\nabla_k \hat m(t,\lambda\phi)}{\lambda\phi^\perp}\text{ for all }\lambda\in\R \]
either
\begin{enumerate}
\item
has a unique solution $(\zeta,\phi)$ in the space $\R\times S^1_+$, $S^1_+\coloneqq\set{(\cos(\alpha),\sin(\alpha))}{\alpha\in[0,\pi)}$, in which case we know that the angular velocity $\omega$ of the rotation $R$ is at the time $t\in[0,T]$ of the form $\omega(t)=(\rho\phi,\zeta)$ for some $\rho\in\R\setminus\{0\}$; or
\item
its solution set in $\R\times S^1_+$ is of the form $\{\zeta\}\times S^1_+$ for some $\zeta\in\R$, in which case $\omega=(0,0,\zeta)$.
\end{enumerate}

In the first case, it thus remains to recover the cylindrical radius $\rho_\omega(t)$ of the angular velocity at the time~$t$.
To do so, we want to use \autoref{eq:inf-common-line-D3}, where the first factor is known to be nonzero if we make the assumption that $R$ is not degenerate, see \autoref{th:nondeg_D3}, and the coefficients $a_0$, $a_{02}$, and $a_1$ are already fully determined from solving the first order equation. We thus solve at fixed $t\in[0,T]$ the equation system
\begin{equation}\label{eq:uniq_ThirdOrd_System}
	a_{02}(t,\lambda)X_1+a_1(t,\lambda)X_2 = -a_0(t,\lambda),\;\lambda\in\R,
\end{equation}
for the variables $X_1=\rho_\omega^2(t)$ and $X_2=\frac{\rho_\omega'(t)}{\rho_\omega(t)}$. To ensure a unique solution, we must guarantee the existence of two values $\lambda_1, \lambda_2 \in \mathbb{R}$ such that the two vectors $(a_{02}(t, \lambda_1), a_1(t, \lambda_1))$ and $(a_{02}(t, \lambda_2), a_1(t, \lambda_2))$ are linearly independent.

\begin{lemma}\label{th:pb-uniq_thirdOrd_coeff}
Let $t\in[0,T]$, $f\in\mathcal{O}$ and $\hat m$ be the parallel-beam measurements of $f$ under the rotation $R\in C^2([0,T];\SO(3))$ with associated angular velocity $\omega$ written in cylindrical coordinates $\rho_\omega, \phi_\omega=({\cos}\circ\varphi_\omega,{\sin}\circ\varphi_\omega)$ and $\zeta_\omega$ as defined in \autoref{eq:def-cylind}. Using the orthonormal basis $(\tilde e_j(t))_{j=1}^3$, defined by
\begin{equation}\label{eq:thirdOrd_ONB}
\tilde e_1(t)\coloneqq R(t)\begin{pmatrix}\phi_\omega(t)\\ 0\end{pmatrix},\;\tilde e_2(t)\coloneqq R(t)\begin{pmatrix}\phi_\omega^\perp(t)\\ 0\end{pmatrix},\;\tilde e_3(t)\coloneqq R(t)e_3,
\end{equation}
the functions $a_0$, $a_{02}$, and $a_1$, given by \autoref{eq:inf-cl-a0}, \autoref{eq:inf-cl-a02}, and \autoref{eq:inf-cl-a1}, can be expressed in terms of $f$ by
\begin{alignat}{2}
&a_0(t,\lambda) &&= 2\rho_\omega'(t)\inner{\nabla\hat f(\lambda\tilde e_1(t))}{\lambda\tilde e_3(t)}-2\rho_\omega^2(t)\inner{\nabla\hat f(\lambda\tilde e_1(t))}{\lambda\tilde e_2(t)}, \label{eq:uniq-inf-cl-a0} \\
&a_{02}(t,\lambda) &&= 2\inner{\nabla\hat f(\lambda\tilde e_1(t))}{\lambda\tilde e_2(t)},\text{ and} \label{eq:uniq-inf-cl-a02} \\
&a_1(t,\lambda) &&= -2\rho_\omega(t)\inner{\nabla\hat f(\lambda\tilde e_1(t))}{\lambda\tilde e_3(t)}. \label{eq:uniq-inf-cl-a1}
\end{alignat}
\end{lemma}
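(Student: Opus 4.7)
The plan is to apply the chain rule systematically to the defining expressions \autoref{eq:inf-cl-a0}, \autoref{eq:inf-cl-a02}, and \autoref{eq:inf-cl-a1}, using that $\hat m(t,k) = \hat f(R(t)(k,0)^\top)$ and $R'(t)x = R(t)(\omega(t)\times x)$. Writing every derivative of $\hat m$ as a derivative of $\hat f$ composed with the moving frame $(\tilde e_j(t))_{j=1}^3$, we then show that the many resulting terms collapse to the claimed right-hand sides. Two geometric identities, obtained by evaluating the cross product in cylindrical coordinates, are used throughout:
\begin{equation*}
R(t)\left(\omega(t)\times\begin{pmatrix}\phi_\omega(t)\\0\end{pmatrix}\right) = \zeta_\omega(t)\tilde e_2(t),\quad R(t)\left(\omega(t)\times\begin{pmatrix}\phi_\omega^\perp(t)\\0\end{pmatrix}\right) = -\zeta_\omega(t)\tilde e_1(t)+\rho_\omega(t)\tilde e_3(t).
\end{equation*}

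For $a_{02}$, only the chain rule for $\D_k\hat m$ is needed and \autoref{eq:uniq-inf-cl-a02} follows immediately. For $a_1$, the Hessian term $\D_k^2\hat m(t,\lambda\phi_\omega(t))[\lambda\phi_\omega^\perp(t),\lambda\phi_\omega^\perp(t)]$ in the first summand is exactly cancelled by the $\D^2\hat f$ contribution from the product-rule expansion of $\partial_t\D_k\hat m$; the remaining gradient contribution, using the second geometric identity above and combining with the middle summand involving $\D_k\hat m(t,\lambda\phi_\omega(t))[\lambda\phi_\omega(t)] = \inner{\nabla\hat f(\lambda\tilde e_1(t))}{\lambda\tilde e_1(t)}$, yields precisely $-2\rho_\omega(t)\inner{\nabla\hat f(\lambda\tilde e_1(t))}{\lambda\tilde e_3(t)}$ as claimed in \autoref{eq:uniq-inf-cl-a1}.

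The main obstacle is the computation of $a_0$ in \autoref{eq:inf-cl-a0}, which collects five summands involving derivatives up to $\D_k^3\hat m$, $\partial_t\D_k^2\hat m$, and $\partial_{tt}\D_k\hat m$; the last of these requires $R''(t)x = R(t)(\omega(t)\times(\omega(t)\times x)+\omega'(t)\times x)$, and it is precisely this $\omega'(t)$ piece, whose cylindrical components carry $\rho_\omega'$, $\zeta_\omega'$, and the angular acceleration $\varphi_\omega'$, that is responsible for the explicit $\zeta_\omega'$ and $\varphi_\omega'$ coefficients appearing in \autoref{eq:inf-cl-a0}. We expect the $\D^3\hat f$ contributions from $\D_k^3\hat m$ and from $\partial_t\D_k^2\hat m$ to cancel pairwise; the $\D^2\hat f$ contributions, after repeated use of the two geometric identities, to collapse to purely first-order gradient expressions; and the $\zeta_\omega'$ and $\varphi_\omega'$ terms, in combination with the corresponding terms generated by $\partial_{tt}\D_k\hat m$, to drop out entirely. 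Only the two gradient terms $2\rho_\omega'(t)\inner{\nabla\hat f(\lambda\tilde e_1(t))}{\lambda\tilde e_3(t)}$ and $-2\rho_\omega^2(t)\inner{\nabla\hat f(\lambda\tilde e_1(t))}{\lambda\tilde e_2(t)}$ survive, yielding \autoref{eq:uniq-inf-cl-a0}. The difficulty is purely bookkeeping — tracking signs, $\lambda$-powers, and verifying that every $\zeta_\omega'$ and $\varphi_\omega'$ term indeed cancels in the final collection.
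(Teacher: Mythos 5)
Your proposal takes essentially the same route as the paper's proof: the chain-rule identities for $\D_k^\ell\hat m$, $\partial_t\D_k^\ell\hat m$, and $\partial_{tt}\D_k\hat m$ in terms of $\D^\ell\hat f$, the two frame identities $R'(t)(\phi_\omega(t),0)^\top=\zeta_\omega(t)\tilde e_2(t)$ and $R'(t)(\phi_\omega^\perp(t),0)^\top=-\zeta_\omega(t)\tilde e_1(t)+\rho_\omega(t)\tilde e_3(t)$, and the expansion $R''(t)x=R(t)\big(\omega(t)\times(\omega(t)\times x)+\omega'(t)\times x\big)$, followed by substitution and cancellation, exactly as in the paper. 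The one detail worth correcting in your otherwise accurate sketch of the $a_0$ bookkeeping is that the third-order terms do \emph{not} cancel pairwise between the $\D_k^3\hat m$ and $\partial_t\D_k^2\hat m$ summands alone (their coefficients of $\D^3\hat f(\lambda\tilde e_1(t))[\lambda\tilde e_2(t),\lambda\tilde e_2(t),\lambda\tilde e_2(t)]$ sum to $-2\zeta_\omega^2(t)$); the cancellation also needs the $+2\zeta_\omega^2(t)$ contribution coming from the leading $\D^3\hat f$ term of $\partial_{tt}\D_k\hat m$, which you instead assigned only to the $\zeta_\omega'$ and $\varphi_\omega'$ cancellations.
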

\begin{proof}
In preparation for the calculations, we remark that according to the definition of $\hat m$ in \autoref{eq:def-measurement-pb}, we have for all $k,v,v_j\in\R^2$, $j\in\{1,\ldots,\ell\}$, $\ell\in\N$, that
\begin{align}
&\D_k^\ell\hat m(t,k)[(v_j)_{j=1}^\ell] = \D^\ell\hat f\left(R(t)\begin{pmatrix}k\\0\end{pmatrix}\right)\left[\left(R(t)\begin{pmatrix}v_j\\0\end{pmatrix}\right)_{j=1}^\ell\right], \label{eq:Dm} \\
&\partial_t\D_k^\ell\hat m(t,k)[(v)_{j=1}^\ell]
\begin{aligned}[t]
&=\D^{\ell+1}\hat f\left(R(t)\begin{pmatrix}k\\0\end{pmatrix}\right)\left[R'(t)\begin{pmatrix}k\\0\end{pmatrix},\left(R(t)\begin{pmatrix}v\\0\end{pmatrix}\right)_{j=1}^\ell\right] \\
&\qquad+\ell\,\D^\ell\hat f\left(R(t)\begin{pmatrix}k\\0\end{pmatrix}\right)\left[R'(t)\begin{pmatrix}v\\0\end{pmatrix},\left(R(t)\begin{pmatrix}v\\0\end{pmatrix}\right)_{j=2}^\ell\right],\text{ and}
\end{aligned} \label{eq:DtDm} \\
&\partial_{tt}\D_k\hat m(t,k)[v]
\begin{aligned}[t]
&=\D^3\hat f\left(R(t)\begin{pmatrix}k\\0\end{pmatrix}\right)\left[R'(t)\begin{pmatrix}k\\0\end{pmatrix},R'(t)\begin{pmatrix}k\\0\end{pmatrix},R(t)\begin{pmatrix}v\\0\end{pmatrix}\right] \\
&\qquad+\D^2\hat f\left(R(t)\begin{pmatrix}k\\0\end{pmatrix}\right)\left[R''(t)\begin{pmatrix}k\\0\end{pmatrix},R(t)\begin{pmatrix}v\\0\end{pmatrix}\right] \\
&\qquad+2\,\D^2\hat f\left(R(t)\begin{pmatrix}k\\0\end{pmatrix}\right)\left[R'(t)\begin{pmatrix}k\\0\end{pmatrix},R'(t)\begin{pmatrix}v\\0\end{pmatrix}\right] \\
&\qquad+\D\hat f\left(R(t)\begin{pmatrix}k\\0\end{pmatrix}\right)\left[R''(t)\begin{pmatrix}v\\0\end{pmatrix}\right].
\end{aligned} \label{eq:DttDm}
\end{align}
Moreover, the derivatives of $R$ at the vectors $(\phi_\omega(t),0)$ and $(\phi_\omega^\perp(t),0)$ can be written as
\begin{align}
&R'(t)\begin{pmatrix}\phi_\omega(t)\\ 0\end{pmatrix}= R(t)\left(\omega(t)\times \begin{pmatrix}\phi_\omega(t)\\ 0\end{pmatrix}\right) = \zeta_\omega(t)\tilde e_2(t)\text{ and} \label{eq:DR1} \\
&R'(t)\begin{pmatrix}\phi_\omega^\perp(t)\\ 0\end{pmatrix} = R(t)\left(\omega(t)\times \begin{pmatrix}\phi_\omega^\perp(t)\\ 0\end{pmatrix}\right) = -\zeta_\omega(t)\tilde e_1(t)+\rho_\omega(t)\tilde e_3(t), \label{eq:DR2}
\end{align}
and we similarly get for the second derivatives, writing $\phi_\omega(t)=(\cos(\varphi_\omega(t)),\sin(\varphi_\omega(t)))$ so that we have $\inner{\phi_\omega(t)}{{\phi_\omega'}^\perp(t)}=-\varphi_\omega'(t)$,
\begin{align}
&R''(t)\begin{pmatrix}\phi_\omega(t)\\ 0\end{pmatrix}
\begin{aligned}[t]
&=R(t)\left(\omega(t)\times\left(\omega(t)\times\begin{pmatrix}\phi_\omega(t)\\0\end{pmatrix}\right)+\omega'(t)\times\begin{pmatrix}\phi_\omega(t)\\0\end{pmatrix}\right) \\
&=-\zeta_\omega^2(t)\tilde e_1(t)+\zeta_\omega'(t)\tilde e_2(t)+\rho_\omega(t)\big(\zeta_\omega(t)-\varphi_\omega'(t)\big)\tilde e_3(t)\text{ and}
\end{aligned} \label{eq:DDR1} \\
&R''(t)\begin{pmatrix}\phi_\omega^\perp(t)\\ 0\end{pmatrix}
\begin{aligned}[t]
&=R(t)\left(\omega(t)\times\left(\omega(t)\times\begin{pmatrix}\phi_\omega^\perp(t)\\0\end{pmatrix}\right)+\omega'(t)\times\begin{pmatrix}\phi_\omega^\perp(t)\\0\end{pmatrix}\right) \\
&=-\zeta_\omega'(t)\tilde e_1(t)-(\rho_\omega^2(t)+\zeta_\omega^2(t))\tilde e_2(t)+\rho_\omega'(t)\tilde e_3.
\end{aligned} \label{eq:DDR2}
\end{align}

With \autoref{eq:Dm}, we then find from \autoref{eq:inf-cl-a02} directly the \autoref{eq:uniq-inf-cl-a02}.

Similarly, we get by using \autoref{eq:Dm}, \autoref{eq:DtDm}, and \autoref{eq:DR1} in \autoref{eq:inf-cl-a1} that
\begin{align*}
a_1(t,\lambda) &= 2\zeta_\omega(t)\Big(\D^2\hat f(\lambda\tilde e_1(t))[\lambda\tilde e_2(t),\lambda\tilde e_2(t)]-\D\hat f(\lambda\tilde e_1(t))[\lambda\tilde e_1(t)]\Big) \\
&\qquad-2\,\D^2\hat f(\lambda\tilde e_1(t))[\lambda\zeta_\omega(t)\tilde e_2(t),\lambda\tilde e_2(t)]-2\,\D\hat f(\lambda\tilde e_1(t))[-\lambda\zeta_\omega(t)\tilde e_1(t)+\lambda\rho_\omega(t)\tilde e_3(t)] \\
&= -2\rho_\omega(t)\,\D\hat f(\lambda\tilde e_1(t))[\lambda\tilde e_3(t)].
\end{align*}

Finally, we evaluate the first three terms in the definition of $a_0$ in \autoref{eq:inf-cl-a0} with \autoref{eq:Dm} to get
\begin{equation}\label{eq:a0_half_simplified}
\begin{split}
a_0(t,\lambda) &= (\zeta_\omega^2(t)-\zeta_\omega(t)\varphi_\omega'(t))\,\D^3\hat f(\lambda\tilde e_1(t))[\lambda\tilde e_2(t),\lambda\tilde e_2(t),\lambda\tilde e_2(t)] \\
&\qquad+2\,\D^2\hat f(\lambda\tilde e_1(t))[\lambda\tilde e_2(t),\lambda\zeta_\omega(t)\varphi_\omega'(t)\tilde e_1(t)-\lambda\zeta_\omega'(t)\tilde e_2(t)] \\
&\qquad+2\,\D\hat f(\lambda\tilde e_1(t))[\lambda\zeta_\omega^2(t)\tilde e_2(t)+\lambda\zeta_\omega'(t)\tilde e_1(t)] \\
&\qquad-(3\zeta_\omega(t)-\varphi_\omega'(t))\,\partial_t\mathrm D^2\hat m(t,\lambda \phi_\omega(t))[\lambda\phi_\omega^\perp(t),\lambda\phi_\omega^\perp(t)] \\
&\qquad+2\,\partial_{tt}\mathrm D\hat m(t,\lambda \phi_\omega(t))[\lambda\phi_\omega^\perp(t)].
\end{split}
\end{equation}
For the second to last term herein, we write, using \autoref{eq:DtDm}, \autoref{eq:DR1}, and \autoref{eq:DR2},
\begin{align*}
\partial_t\D_k^2\hat m(t,\lambda\phi_\omega(t))[\lambda\phi_\omega^\perp(t),\lambda\phi_\omega^\perp(t)]
&= \D^3\hat f(\lambda\tilde e_1(t))[\lambda\zeta_\omega(t)\tilde e_2(t),\lambda\tilde e_2(t),\lambda\tilde e_2(t)] \\
&\qquad+2\,\D^2\hat f(\lambda\tilde e_1(t))[-\lambda\zeta_\omega(t)\tilde e_1(t)+\lambda\rho_\omega(t)\tilde e_3(t),\lambda\tilde e_2(t)].
\end{align*}
And for the last term, we find with the help of \autoref{eq:DttDm}, \autoref{eq:DR1}, \autoref{eq:DR2}, \autoref{eq:DDR1}, and \autoref{eq:DDR2} that
\begin{align*}
&\partial_{tt}\D_k\hat m(t,\lambda\phi_\omega(t))[\lambda\phi_\omega^\perp(t)] \\
&\qquad=\D^3\hat f(\lambda\tilde e_1(t))[\lambda\zeta_\omega(t)\tilde e_2(t),\lambda\zeta_\omega(t)\tilde e_2(t),\lambda\tilde e_2(t)] \\
&\qquad\qquad+\D^2\hat f(\lambda\tilde e_1(t))[-\lambda\zeta_\omega^2(t)\tilde e_1(t)+\lambda\zeta_\omega'(t)\tilde e_2(t)+\lambda\rho_\omega(t)\big(\zeta_\omega(t)-\varphi_\omega'(t)\big)\tilde e_3(t),\lambda\tilde e_2(t)] \\
&\qquad\qquad+2\D^2\hat f(\lambda\tilde e_1(t))[\lambda\zeta_\omega(t)\tilde e_2(t),-\lambda\zeta_\omega(t)\tilde e_1(t)+\lambda\rho_\omega(t)\tilde e_3(t)] \\
&\qquad\qquad+\D\hat f(\lambda\tilde e_1(t))[-\lambda\zeta_\omega'(t)\tilde e_1(t)-\lambda(\rho_\omega^2(t)+\zeta_\omega^2(t))\tilde e_2(t)+\lambda\rho_\omega'(t)\tilde e_3(t)].
\end{align*}
Inserting these two expressions into \autoref{eq:a0_half_simplified}, most of the terms cancel out and we are left with \autoref{eq:uniq-inf-cl-a0}.
\end{proof}

With these representations for the coefficients $a_0$, $a_{02}$, and $a_1$, we can check the linear independence of the equations in \autoref{eq:uniq_ThirdOrd_System} explicitly.

\begin{proposition}\label{thm:pb_uniqueness_thirdorder}
Let $f\in\mathcal{O}\setminus\mathcal{S}_{\mathrm{PB}}$ and $R\in C^2([0,T];\SO(3))$ be a rotational motion which is nondegenerate at a time $t\in[0,T]$ with associated angular velocity $\omega$ written in cylindrical coordinates $\rho_\omega$, $\phi_\omega=({\cos}\circ\varphi_\omega,{\sin}\circ\varphi_\omega)$, and $\zeta_\omega$ as defined in \autoref{eq:def-cylind}. Moreover, let $\hat m$ be the parallel-beam measurements of $f$ under $R$.

Then, \autoref{eq:uniq_ThirdOrd_System} with $a_0$, $a_{02}$, and $a_1$ defined by \autoref{eq:inf-cl-a0}, \autoref{eq:inf-cl-a02}, and \autoref{eq:inf-cl-a1} has a unique solution $(X_1,X_2)\in\R^2$ and it is given by $(X_1,X_2)=(\rho_\omega^2(t),\frac{\rho_\omega'(t)}{\rho_\omega(t)})$.
\end{proposition}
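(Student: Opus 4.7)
The plan is to first invoke \autoref{th:pb-uniq_thirdOrd_coeff} to write the coefficients $a_0,a_{02},a_1$ in terms of the directional derivatives of $\hat f$ along $\lambda\tilde e_1(t)$. A direct substitution then verifies that $(X_1,X_2)=(\rho_\omega^2(t),\rho_\omega'(t)/\rho_\omega(t))$ solves \autoref{eq:uniq_ThirdOrd_System} for every $\lambda\in\R$: the two terms $2\rho_\omega^2(t)\inner{\nabla\hat f(\lambda\tilde e_1(t))}{\lambda\tilde e_2(t)}$ cancel, as do the two terms $2\rho_\omega'(t)\inner{\nabla\hat f(\lambda\tilde e_1(t))}{\lambda\tilde e_3(t)}$. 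This gives existence.

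For uniqueness of $(X_1,X_2)\in\R^2$, it suffices to exhibit two values $\lambda_1,\lambda_2\in\R$ such that the vectors $(a_{02}(t,\lambda_j),a_1(t,\lambda_j))$, $j\in\{1,2\}$, are linearly independent. I will argue by contradiction: if the map $\lambda\mapsto(a_{02}(t,\lambda),a_1(t,\lambda))$ has all its values on a single line through the origin, then there exists $(\alpha,\beta)\in\R^2\setminus\{(0,0)\}$ with
\[
\alpha\,a_{02}(t,\lambda)+\beta\,a_1(t,\lambda)=0\text{ for all }\lambda\in\R.
\]
Substituting the explicit formulas from \autoref{eq:uniq-inf-cl-a02} and \autoref{eq:uniq-inf-cl-a1}, this becomes
\[
\lambda\,\inner{\nabla\hat f(\lambda\tilde e_1(t))}{\alpha\tilde e_2(t)-\beta\rho_\omega(t)\tilde e_3(t)}=0\text{ for all }\lambda\in\R,
\]
and hence, after dividing by $\lambda\neq0$ and taking the limit $\lambda\to0$ by continuity, for all $\lambda\in\R$.

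The crux is then to show that the vector $\eta\coloneqq\alpha\tilde e_2(t)-\beta\rho_\omega(t)\tilde e_3(t)$ is nonzero: if $\alpha\neq0$, this is clear from the orthonormality of $(\tilde e_j(t))_{j=1}^3$; if $\alpha=0$, then $\beta\neq0$ and the nondegeneracy of $R$ at $t$ guarantees via \autoref{th:nondeg_D3} that $\rho_\omega(t)\neq0$, so again $\eta\neq0$. Normalizing and setting $\xi\coloneqq\tilde e_1(t)$, we have $\inner\xi{\eta/\norm\eta}=0$ and $\inner{\nabla\hat f(\lambda\xi)}{\eta/\norm\eta}=0$ for all $\lambda\in\R$, contradicting the PB-asymmetry assumption $f\notin\mathcal S_{\mathrm{PB}}$. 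Hence two linearly independent coefficient vectors exist and the solution is unique. The main subtlety lies in handling the case $\alpha=0$ cleanly, which is exactly where the nondegeneracy hypothesis $\rho_\omega(t)\neq0$ enters — without it, one could not rule out $\eta=0$ and the PB-symmetry contradiction would collapse.
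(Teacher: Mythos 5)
Your proposal is correct and follows essentially the same route as the paper: both verify the candidate solution by direct substitution of the formulas from \autoref{th:pb-uniq_thirdOrd_coeff}, and both establish uniqueness by assuming all coefficient vectors $(a_{02}(t,\lambda),a_1(t,\lambda))$ are parallel, extracting a nonzero vector $\eta$ orthogonal to $\tilde e_1(t)$ (nonzero precisely because nondegeneracy forces $\rho_\omega(t)\ne0$), and contradicting PB-asymmetry. Your explicit handling of the factor $\lambda$ and the case split on $\alpha=0$ are minor refinements of the same argument.
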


\begin{proof}
If all the vectors $(a_{02}(t,\lambda),a_1(t,\lambda))\in\R^2$, $\lambda\in\R$, were parallel, there would exist a nonzero vector $(c_{02},c_1)\in\R^2\setminus\{(0,0)\}$ orthogonal to all of them:
	\[ c_{02}a_{02}(t,\lambda)+c_1a_1(t,\lambda)=0\text{ for all }\lambda\in\R. \]
Introducing the orthonormal basis $(\tilde e_j(t))_{j=1}^3$ as in \autoref{eq:thirdOrd_ONB},
we can write this with the explicit expressions for $a_{02}$ and $a_1$ from \autoref{eq:uniq-inf-cl-a02} and \autoref{eq:uniq-inf-cl-a1} in the form
	\[\inner{\nabla \hat f(\lambda \tilde e_1(t))}{c_{02}\tilde e_2(t)-c_1\rho_\omega(t)\tilde e_3(t)}=0\text{ for all }\lambda\in\R.\]
Since $\eta\coloneqq c_{02}\tilde e_2(t)-c_1\rho_\omega(t)\tilde e_3(t)$ is a nonzero vector (as $\rho_\omega(t)\ne0$ because $R$ is nondegenerate at $t$, see \autoref{th:nondeg_D3}, and at least one of the constants $c_{02}$ and $c_1$ is nonzero) and is orthogonal to $\tilde e_1(t)$, this would contradict the fact that $f$ should be PB-asymmetric.

Therefore, there exist at least two values $\lambda_1,\lambda_2\in\R$ so that the vectors $(a_{02}(t,\lambda_1),a_1(t,\lambda_1))$ and $(a_{02}(t,\lambda_2),a_1(t,\lambda_2))$ are linearly independent, which implies that the linear equation system in \autoref{eq:uniq_ThirdOrd_System} can have at most one solution.

Moreover, we see that the vector $(X_1,X_2)\coloneqq(\rho_\omega^2(t),\frac{\rho_\omega'(t)}{\rho_\omega(t)})$ satisfies with the formulas from \autoref{th:pb-uniq_thirdOrd_coeff} for the coefficients $a_0$, $a_{02}$, and $a_1$ that
\[ a_{02}(t,\lambda)X_1+a_1(t,\lambda)X_2 = 2\rho_\omega^2(t)\inner{\nabla\hat f(\lambda\tilde e_1(t))}{\lambda\tilde e_2(t)}-2\rho_\omega'(t)\inner{\nabla\hat f(\lambda\tilde e_1(t))}{\lambda\tilde e_3(t)} = -a_0 \]
for every $\lambda\in\R$.
\end{proof}

\autoref{eq:uniq_ThirdOrd_System} thus allows us to uniquely recover for a PB-asymmetric object under a nondegenerate rotation $R\in C^2([0,T];\SO(3))$ from the knowledge of the cylindrical components $\phi_\omega$ and $\zeta_\omega$ of the corresponding angular velocity $\omega$ the values $\rho_\omega^2(t)$ and $\frac{\rho_\omega'}{\rho_\omega}(t)$ at some time $t\in[0,T]$. However, this only allows us to retrieve the absolute value $\abs{\rho_\omega(t)}$ of the remaining cylindrical component $\rho_\omega(t)$.

But this restriction that $\rho_\omega(t)$ can only be determined up to its sign is just a manifestation of the fact that we cannot obtain the rotation $R$, but at best the equivalence class $\{R,\Sigma R\Sigma\}$, $\Sigma\coloneqq\operatorname{diag}(1,1,-1)$, as we discussed before formulating \autoref{pr:pb}. Indeed, if $\omega$ is the angular velocity associated to $R$, then we get with the identity $(Ax)\times(A\tilde x) = \det(A)A(x\times\tilde x)$ for all $x,\tilde x\in\R^3$ and $A\in\mathrm O(3)$ that
\begin{equation}\label{eq:equivalence_class_angular_velocity}
\Sigma R'(t)\Sigma x = \Sigma R(t)(\omega(t)\times(\Sigma x)) = -\Sigma R(t)\Sigma(\Sigma\omega(t)\times x)\text{ for all }t\in[0,T],\;x\in\R^3,
\end{equation}
meaning that $-\Sigma\omega(t)=(-\omega_1(t),-\omega_2(t),\omega_3(t))$ is the angular velocity associated to $\Sigma R\Sigma$.

Thus, putting \autoref{thm:pb_uniqueness_firstorder} and \autoref{thm:pb_uniqueness_thirdorder} together, we get the reconstruction of the rotational motion in the nondegenerate case.

\begin{theorem}\label{thm:pb_uniqueness}
	Let $f^{(1)},f^{(2)}\in\mathcal{O}\setminus\mathcal S_{\mathrm{PB}}$ be two PB-asymmetric objects, $R^{(1)},R^{(2)}\in C^2([0,T];\SO(3))$ be two rotational motions which are normalized by $R^{(1)}(0)=R^{(2)}(0)=\I_{3\times3}$ and nondegenerate at every point $t\in[0,T]$, and $\omega^{(1)}$ and $\omega^{(2)}$ be their respective associated angular velocities. Moreover, let $\hat{m}^{(j)}$ denote the parallel-beam measurements of $f^{(j)}$ under the rotation $R^{(j)}$, $j\in\{1,2\}$.
	
	We then have that $\hat m^{(1)}=\hat m^{(2)}$ if and only if
\begin{alignat*}{3}
&R^{(2)}=R^{(1)}&&\text{ and }\hat f^{(1)}(\kappa)=\hat f^{(2)}(\kappa)&&\text{ for all }\kappa\in K\text{ or} \\
&R^{(2)}=\Sigma R^{(1)}\Sigma&&\text{ and }\hat f^{(1)}(\kappa)=\hat f^{(2)}(\Sigma\kappa)&&\text{ for all }\kappa\in K,
\end{alignat*}
where $K\coloneqq\set*{R^{(1)}(t)\begin{pmatrix}k\\0\end{pmatrix}}{t\in[0,T],\,k\in\R^2}$.
\end{theorem}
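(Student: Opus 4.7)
The plan is to treat the two implications of the theorem separately. The forward direction is a direct computation: in the case $R^{(2)}=R^{(1)}$ with $\hat f^{(1)}=\hat f^{(2)}$ on $K$ the identity $\hat m^{(1)}=\hat m^{(2)}$ is immediate, and in the case $R^{(2)}=\Sigma R^{(1)}\Sigma$ with $\hat f^{(1)}(\kappa)=\hat f^{(2)}(\Sigma\kappa)$ on $K$ I would use that $\Sigma(k,0)^\top=(k,0)^\top$ (as $\Sigma$ flips only the third coordinate) to compute $\hat m^{(2)}(t,k)=\hat f^{(2)}(\Sigma R^{(1)}(t)\Sigma(k,0)^\top)=\hat f^{(2)}(\Sigma R^{(1)}(t)(k,0)^\top)=\hat f^{(1)}(R^{(1)}(t)(k,0)^\top)=\hat m^{(1)}(t,k)$, reproducing the symmetry observed in \eqref{eq:equivalence_class}.

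For the converse, I would apply \autoref{thm:pb_uniqueness_firstorder} to both triples $(f^{(j)},R^{(j)},\omega^{(j)})$, $j\in\{1,2\}$, relative to the common data $\hat m\coloneqq\hat m^{(1)}=\hat m^{(2)}$. Since each motion is nondegenerate throughout $[0,T]$, \autoref{th:nondeg_D3} gives $\rho_{\omega^{(j)}}(t)\ne0$ for every $t\in[0,T]$, so case \textit{(i)} of that proposition yields at each $t$ a unique pair $(\phi,\zeta)\in S^1_+\times\R$ solving the first-order equation; in particular $\phi_{\omega^{(1)}}\equiv\phi_{\omega^{(2)}}$ and $\zeta_{\omega^{(1)}}\equiv\zeta_{\omega^{(2)}}$. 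Invoking \autoref{thm:pb_uniqueness_thirdorder} next at every $t$ then forces $\rho_{\omega^{(1)}}^2(t)=\rho_{\omega^{(2)}}^2(t)$ and $\rho_{\omega^{(1)}}'(t)/\rho_{\omega^{(1)}}(t)=\rho_{\omega^{(2)}}'(t)/\rho_{\omega^{(2)}}(t)$.

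To finish, I would resolve the remaining sign ambiguity by observing that $t\mapsto\rho_{\omega^{(2)}}(t)/\rho_{\omega^{(1)}}(t)$ is continuous on $[0,T]$ and takes values only in $\{-1,+1\}$, hence must be globally constant. If its value is $+1$ then $\omega^{(1)}\equiv\omega^{(2)}$, and the linear ODE \eqref{eq:def-omega} with the shared initial condition $R^{(j)}(0)=\I_{3\times3}$ forces $R^{(1)}=R^{(2)}$; if its value is $-1$ then $\omega^{(2)}=-\Sigma\omega^{(1)}$, which by \eqref{eq:equivalence_class_angular_velocity} is precisely the angular velocity associated to $\Sigma R^{(1)}\Sigma$, and the same ODE uniqueness argument (using $\Sigma\I_{3\times3}\Sigma=\I_{3\times3}$) then yields $R^{(2)}=\Sigma R^{(1)}\Sigma$. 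In both cases, substituting the identified rotation back into $\hat m^{(1)}=\hat m^{(2)}$ together with $\Sigma(k,0)^\top=(k,0)^\top$ gives the asserted identities for $\hat f^{(1)}$ and $\hat f^{(2)}$ on $K$. The main obstacle is exactly this sign issue: the third-order equation only recovers the even quantity $\rho_\omega^2$, and only the joint information of the logarithmic derivative, combined with continuity and the nonvanishing guaranteed by nondegeneracy, upgrades the pointwise $\pm$ freedom to a single global binary choice, which is the source of the $\Sigma$-reflection ambiguity in the conclusion.
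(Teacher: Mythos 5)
Your proposal is correct and follows essentially the same route as the paper: forward direction by direct computation using $\Sigma(k,0)^\top=(k,0)^\top$, and converse by combining \autoref{thm:pb_uniqueness_firstorder} and \autoref{thm:pb_uniqueness_thirdorder} with the nondegeneracy (via \autoref{th:nondeg_D3}) to pin down $\phi_\omega$, $\zeta_\omega$, and $\abs{\rho_\omega}$, then resolving the sign globally by continuity and concluding through ODE uniqueness and \autoref{eq:equivalence_class_angular_velocity}. The paper phrases the global sign step as continuity of $\omega^{(1)},\omega^{(2)}$ with nonvanishing first two components rather than your ratio $\rho_{\omega^{(2)}}/\rho_{\omega^{(1)}}$, but this is the same argument.
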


\begin{proof}
If the object and the rotation coincide, the measurements clearly are identical; and if $R^{(2)}=\Sigma R^{(1)}\Sigma$, we have seen in \autoref{eq:equivalence_class} that the measurements of $f^{(1)}$ under $R^{(1)}$ are the same as those of $f^{(2)}\circ\Sigma$ under $R^{(2)}$.

On the other hand, if we have that $\hat m^{(1)}=\hat m^{(2)}$, we write for $j\in\{1,2\}$ the angular velocity $\omega^{(j)}$ in the cylindrical coordinates $\rho_{\omega^{(j)}}$, $\phi_{\omega^{(j)}}=({\cos}\circ\varphi_{\omega^{(j)}},{\sin}\circ\varphi_{\omega^{(j)}})$, and $\zeta_{\omega^{(j)}}$ and remark that the nondegeneracy assumptions imply according to \autoref{th:nondeg_D3} that $\rho_{\omega^{(j)}}$ does not vanish anywhere. Then we get from \autoref{thm:pb_uniqueness_firstorder} for every $t\in[0,T]$ that $(\phi_{\omega^{(1)}}(t),\zeta_{\omega^{(1)}}(t))=(\phi_{\omega^{(2)}}(t),\zeta_{\omega^{(2)}}(t))$ as both are the unique solution of the equation
\[ \partial_t\hat m(t,\lambda\phi)=\zeta \inner{\nabla_k \hat m(t,\lambda\phi)}{\lambda\phi^\perp}\text{ for all }\lambda\in\R \]
for $(\phi,\zeta)\in S^1_+\times\R$, $S^1_+\coloneqq\set{(\cos(\alpha),\sin(\alpha))}{\alpha\in[0,\pi)}$.

We then define the coefficients $a_0$, $a_{02}$, and $a_1$ by \autoref{eq:inf-cl-a0}, \autoref{eq:inf-cl-a02}, and \autoref{eq:inf-cl-a1} with $\phi_\omega\coloneqq\phi_{\omega^{(1)}}$, $\varphi_\omega\coloneqq\varphi_{\omega^{(1)}}$, and $\zeta_\omega=\zeta_{\omega^{(1)}}$. \autoref{eq:uniq_ThirdOrd_System} for the variables $X_1$ and $X_2$ at an arbitrary time $t\in[0,T]$ then has, according to \autoref{thm:pb_uniqueness_thirdorder}, the unique solution
\[ (X_1,X_2)=\left(\rho_{\omega^{(1)}}^2(t),\frac{\rho_{\omega^{(1)}}'(t)}{\rho_{\omega^{(1)}}(t)}\right)=\left(\rho_{\omega^{(2)}}^2(t),\frac{\rho_{\omega^{(2)}}'(t)}{\rho_{\omega^{(2)}}(t)}\right). \]
We thus have $\abs{\rho_{\omega^{(1)}}(t)}=\abs{\rho_{\omega^{(2)}}(t)}$, which means that $\omega^{(1)}(t)=\omega^{(2)}(t)$ or $\omega^{(1)}(t)=-\Sigma\omega^{(2)}(t)$, where $\Sigma\coloneqq\diag(1,1,-1)$.

Since $\omega^{(1)}$ and $\omega^{(2)}$ are continuous and their first two components are everywhere nonzero, we find that we either have $\omega^{(1)}(t)=\omega^{(2)}(t)$ for every $t\in[0,T]$ or $\omega^{(1)}(t)=-\Sigma\omega^{(2)}(t)$ for every $t\in[0,T]$. Since $R^{(j)}$ is uniquely determined by $\omega^{(j)}$ via \autoref{eq:def-omega} and the initial condition $R^{(j)}(0)=\I_{3\times3}$, $j\in\{1,2\}$, we get that $R^{(1)}=R^{(2)}$ in the first case and, with \autoref{eq:equivalence_class_angular_velocity}, that $R^{(1)}=\Sigma R^{(2)}\Sigma$ in the second case.

The equality of the measurements then enforces that we have for all $t\in[0,T]$ and $k\in\R^2$
\[ \hat f^{(1)}\left(R^{(1)}(t)\begin{pmatrix}k\\0\end{pmatrix}\right) = \hat m(t,k) = \hat f^{(2)}\left(R^{(1)}(t)\begin{pmatrix}k\\0\end{pmatrix}\right) \]
in the first and
\[ \hat f^{(1)}\left(R^{(1)}(t)\begin{pmatrix}k\\0\end{pmatrix}\right) = \hat m(t,k) = \hat f^{(2)}\left(\Sigma R^{(1)}(t)\Sigma\begin{pmatrix}k\\0\end{pmatrix}\right) = \hat f^{(2)}\left(\Sigma R^{(1)}(t)\begin{pmatrix}k\\0\end{pmatrix}\right) \]
in the second case.
\end{proof}

\subsection{PB-asymmetric point sets and construction of PB-asymmetric functions}
To study the set of PB-asymmetric objects, we will, as in the case of diffraction tomography data, first consider objects which consist of finitely many point masses so that the condition from \autoref{def:pb-symmetry} reduces to a constraint on the arrangement of their centers.

\begin{definition}\label{def:pb_asym_points}
	We call a finite set $P\subset\R^3$ a PB-asymmetric point set if we can find for every direction $\xi\in S^2$ two points $p_1(\xi),p_2(\xi)\in P$ such that the vectors $\xi$, $p_1(\xi)$, and $p_2(\xi)$ are linearly independent and
	\begin{equation} \label{eq:pb-asym-set1}
		\inner{p_j(\xi)}\xi\ne\inner p\xi\text{ for all }p\in P\setminus\{p_j(\xi)\},\;j\in\{1,2\}.
	\end{equation}
\end{definition}

To guarantee the existence of two points from a set $P$ satisfying the condition given in \autoref{eq:pb-asym-set1}, it must contain at least six points, since we can find for every set of four points $p_j \in \R^3$, $j \in \{1, 2, 3, 4\}$, a direction $\xi\in S^2$ orthogonal to the vectors $p_1 - p_2$ and $p_3 - p_4$ so that $\inner{p_1}\xi=\inner{p_2}\xi$ and $\inner{p_3}\xi=\inner{p_4}\xi$.

However, even if we have two more points $p_5$ and $p_6$ in $P$ with different components in the direction $\xi$, the vectors $\xi$, $p_5$, and $p_6$ could still be linearly dependent so that $P$ would nevertheless fail to be a PB-asymmetric point set.
To exclude this, we will require that $P$ has at least seven points, placed in such a way that there are for every direction $\xi$ at most four points sharing the same projection onto $\xi$ with some other point of $P$ and that three remaining elements in $P$ are linearly independent.

\begin{lemma}\label{thm:pb_asym_points_existence}
	Let $N\geq7$ and $P=\{p_j\}_{j=1}^N\subset\R^3$ be a finite set of points with the following properties:
	\begin{enumerate}
		\item $\det(p_i,p_j,p_k)\neq 0$ for all distinct $i,j,k\in\{1,\ldots,N\}$ and\label{eq:pb_point_exist_cond3}
		\item $\det(p_{i_1}-p_{j_1},p_{i_2}-p_{j_2},p_{i_3}-p_{j_3})\neq0$ for all distinct index pairs $(i_\ell,j_\ell)\in\{(i, j) \mid 1 \leq i < j \leq N\}$, $\ell\in\{1,2,3\}$, which contain at least four different indices: $\abs{\{i_1, j_1, i_2, j_2, i_3, j_3\}} \geq 4$. \label{eq:pb_point_exist_cond2}
	\end{enumerate}
	Then, $P$ is a PB-asymmetric point set. 
\end{lemma}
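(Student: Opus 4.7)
The plan is to fix an arbitrary direction $\xi\in S^2$ and study the partition of $P$ into the fibers of the projection map $p\mapsto\inner p\xi$; two points $p,q\in P$ lie in the same fiber exactly when $p-q\in\xi^\perp$. A point $p\in P$ satisfies the uniqueness condition in \autoref{eq:pb-asym-set1} at the direction $\xi$ precisely when its fiber is a singleton, so the goal is to produce at least two singleton fibers and then to choose representatives that together with $\xi$ span $\R^3$.

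I would first exploit condition~\autoref{eq:pb_point_exist_cond2} to drastically restrict the possible fiber structure. Since any three vectors in the two-dimensional subspace $\xi^\perp$ are linearly dependent, any three pairs of points with differences in $\xi^\perp$ and a combined index set of size at least four would violate the second hypothesis. A short case analysis then rules out any fiber of size $\geq 4$, the coexistence of a fiber of size $\geq 3$ with any other non-singleton fiber (use the triple $(p_1,p_2),(p_3,p_4),(p_3,p_5)$), and three or more fibers of size two (use three disjoint same-fiber pairs). The surviving patterns are therefore: all singletons; one fiber of size two; one fiber of size three; or two disjoint fibers of size two, with the remaining points always singletons. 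In each admissible pattern at most four points lie in non-singleton fibers, so since $N\geq 7$ there are always at least three singleton points $p,q,r\in P$.

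To conclude, I would select from $\{p,q,r\}$ two points that are linearly independent together with $\xi$. If no such pair existed, then $\xi$ would lie in each of the three planes $\spn\{p,q\}$, $\spn\{p,r\}$, and $\spn\{q,r\}$; but condition~\autoref{eq:pb_point_exist_cond3} makes $p,q,r$ linearly independent, so these planes intersect pairwise in the distinct lines $\spn\{p\}$, $\spn\{q\}$, and $\spn\{r\}$, and $\xi$ would be forced to be parallel to two of $p,q,r$ simultaneously, a contradiction. Two of the singletons therefore supply the points $p_1(\xi)$ and $p_2(\xi)$ required by \autoref{def:pb_asym_points}.

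I expect the main obstacle to be the bookkeeping in the fiber case analysis, specifically ensuring that the threshold "at least four distinct indices" is genuinely met for each forbidden pattern while it fails for the admissible ones; once this is in place, the final step is a transparent dimension count relying only on the linear independence of $p,q,r$ provided by condition~\autoref{eq:pb_point_exist_cond3}.
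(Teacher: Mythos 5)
Your proposal is correct and follows essentially the same route as the paper: partition $P$ into the fibers of $p\mapsto\inner p\xi$, use condition \textit{(ii)} to rule out every fiber pattern leaving fewer than three singletons, and then use condition \textit{(i)} to pick two singleton representatives spanning $\R^3$ together with $\xi$. Your closing dimension-count argument is a slightly more explicit version of the final step the paper states without elaboration, but the substance is identical.
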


\begin{proof}
	Let $\xi\in S^2$ be an arbitrary vector and define the set $C$ of components of the points in $P$ in the direction $\xi$:
	\[ C\coloneqq\set{\inner p\xi}{p\in P},\]
	and the set $P_c$ of all points with the same projection $c\in C$ onto $\xi$:
	\[P_c\coloneqq\set{p\in P}{\inner p\xi=c}. \]

	We now claim that there always exist at least three values $c\in C$ with $\abs{P_c}=1$.
	\begin{itemize}
		\item
		If there was a value $c\in C$ with $P_c\supset\{\tilde p_j\}_{j=1}^4$ with four different elements $\tilde p_j$, $j\in\{1,2,3,4\}$, then the determinant of the three vectors $\tilde p_1-\tilde p_2$, $\tilde p_2-\tilde p_3$, and $\tilde p_3-\tilde p_4$ would be zero as they would be all perpendicular to $\xi$, which would contradict the assumption in \autoref{eq:pb_point_exist_cond2}.
		\item
		If we have a $c\in C$ with $\abs{P_c}=3$, then we get with $P_c=\{\tilde p_1,\tilde p_2,\tilde p_3\}$ that $\inner{\tilde p_2-\tilde p_3}\xi = \inner{\tilde p_1-\tilde p_2}\xi = 0$. If we assume that $c'\in C$ was another element with $P_{c'}\supset\{\tilde p_4,\tilde p_5\}$, then we would additionally have $\inner{\tilde p_4-\tilde p_5}\xi = 0$.
		But, since the vectors $\tilde p_3-\tilde p_2$, $\tilde p_2-\tilde p_1$, and $\tilde p_4-\tilde p_5$ are linearly independent by the condition given in \autoref{eq:pb_point_exist_cond2}, this would imply $\xi=0$, which is impossible.

		So if we have one value $c\in C$ with $\abs{P_c}=3$, we necessarily get that $\abs{P_{c'}}=1$ for all $c'\ne c$ which implies that there are at least four values $c$ with $\abs{P_c}=1$.
		\item
		Assume that we have two different values $c,c'\in C$ such that $P_c$ and $P_{c'}$ both have two elements: $P_c=\{\tilde p_1,\tilde p_2\}$, $P_{c'}=\{\tilde p_3,\tilde p_4\}$.
		If there was another value $c''\in C\setminus\{c,c'\}$ for which $P_{c''}$ has at least two elements, $P_{c''}\supset \{\tilde p_5,\tilde p_6\}$, then we would have $\inner{\tilde p_1-\tilde p_2}\xi = \inner{\tilde p_3-\tilde p_4}\xi = \inner{\tilde p_5-\tilde p_6}\xi = 0$, which would again imply $\xi=0$. So there must be at least three values $\tilde c\in C$ with $\abs{P_{\tilde c}}=1$.
	\end{itemize}

	Hence, we can find for every direction $\xi$ a set $\tilde P_\xi\subset P$ consisting of at least three points having distinct projections onto $\xi$ with respect to all other points: $\inner{\tilde p}\xi\ne\inner p\xi$ for all $\tilde p\in\tilde P_\xi$ and $p\in P\setminus\{\tilde p\}$.

	Because of the assumption in \autoref{eq:pb_point_exist_cond3} that every set of three points in $P$ is linearly independent and $\tilde P_\xi$ contains at least three points, we can pick two points $p_1(\xi),p_2(\xi)\in\tilde P_\xi$ such that $p_1(\xi)$, $p_2(\xi)$, and $\xi$ are linearly independent. By construction of the set $\tilde P_\xi$, they then also fulfill \autoref{eq:pb-asym-set1}.
\end{proof}

By taking an appropriately weighted sum of radially symmetric functions centered at points of a PB-asymmetric point set $P\coloneqq\{p_j\}_{j=1}^N$ such that the first moments of this sum vanishes, we can construct a PB-asymmetric function. For this, the nonzero weights have to be chosen such that $\sum_{j=1}^Nw_jp_j=0$, and we can guarantee the existence of such weights with \autoref{thm:dt_weights} if we construct the PB-asymmetric point set as in \autoref{thm:pb_asym_points_existence} so that any three distinct points from $P$ are linearly independent.

\begin{lemma}\label{thm:pb_def_dist}
	Let $P=\{p_j\}_{j=1}^N\subset \B_{r_{\mathrm s}}^3$ be a PB-asymmetric point set and $\{w_j\}_{j=1}^N\subset\R\setminus\{0\}$ be corresponding weights with $\sum_{j=1}^N w_j p_j=0$. Moreover, let $\psi\in L^2(\B_{r_{\mathrm s}}^3)\setminus\{0\}$ be a radial function with $\supp(\psi)\subset \B^3_\delta$, where $\delta\coloneqq\mathrm{dist}(P,\partial \B_{r_{\mathrm s}}^3)$.

The function
	\begin{equation}\label{eq:pb_def_dist}
		\Psi\colon\B_{r_{\mathrm s}}^3\to\C,\;\Psi(x)\coloneqq\sum_{j=1}^{N}w_j \psi(x-p_j),
	\end{equation}
	in $\mathcal O$ is then a PB-asymmetric function.
\end{lemma}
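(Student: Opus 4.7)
The plan is to follow the strategy of \autoref{thm:dt_def_dist}, which is considerably simpler in the PB setting because the symmetry condition no longer involves the function $h$ and its holomorphic continuation. First, I would verify that $\Psi\in\mathcal{O}$ by expanding the first moment: using the radial symmetry of $\psi$ (so $\int x\psi(x)\d x=0$) and the choice of weights ($\sum_j w_j p_j=0$), the translation $x\mapsto x-p_j$ in each summand shows that $\int x\Psi(x)\d x=0$, exactly as in the DT proof.

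Next, I would argue by contradiction. Assume that $\Psi$ is PB-symmetric, so that there exist $\xi,\eta\in S^2$ with $\inner{\xi}{\eta}=0$ and
\[ \inner{\nabla\hat\Psi(\lambda\xi)}{\eta}=0\text{ for all }\lambda\in\R. \]
Differentiating $\hat\Psi(k)=\hat\psi(k)\sum_{j=1}^N w_j\e^{-\i\inner{p_j}{k}}$ as in \autoref{eq:dt_psi_fourier_grad} and using that $\nabla\hat\psi(\lambda\xi)$ is parallel to $\xi$ (because $\psi$ is radial) and hence perpendicular to $\eta$, the symmetry assumption reduces to
\[ \hat\psi(\lambda\xi)\sum_{j=1}^N w_j\inner{p_j}{\eta}\e^{-\i\lambda\inner{p_j}{\xi}}=0\text{ for all }\lambda\in\R. \]
Since $\hat\psi$ is radial and analytic (Fourier transform of a compactly supported radial function), the same analyticity argument used in \autoref{thm:dt_def_dist} shows that $\hat\psi(\lambda\xi)\ne0$ on some nonempty open interval $I\subset\R$, on which the remaining sum must vanish identically.

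Now I would invoke the PB-asymmetry of $P$: applied to the direction $\xi$, it provides two points, which I relabel as $p_1,p_2$, such that $\inner{p_1}{\xi}$ and $\inner{p_2}{\xi}$ differ from the projection $\inner{p}{\xi}$ of every other point $p\in P$, and such that $\xi$, $p_1$, $p_2$ are linearly independent. Grouping the remaining points by shared projection values onto $\xi$, the equation becomes a finite linear combination of pairwise distinct exponentials $\lambda\mapsto\e^{-\i\lambda c}$, which are linearly independent on $I$. Extracting the coefficients of $\e^{-\i\lambda\inner{p_1}{\xi}}$ and $\e^{-\i\lambda\inner{p_2}{\xi}}$ yields $w_1\inner{p_1}{\eta}=w_2\inner{p_2}{\eta}=0$, and since the weights are nonzero, $\inner{p_1}{\eta}=\inner{p_2}{\eta}=0$. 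Together with $\inner{\xi}{\eta}=0$, this says that $\eta\in S^2$ is orthogonal to each of $\xi$, $p_1$, $p_2$; by their linear independence, $\eta=0$, a contradiction.

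The main obstacle is really just the careful bookkeeping to extract a single exponential coefficient when the other points may share projection values in arbitrary patterns; the PB-asymmetry condition is designed precisely so that the two distinguished projections $\inner{p_1}{\xi}$ and $\inner{p_2}{\xi}$ are isolated from the rest, which makes this extraction immediate. The rest of the argument is a direct simplification of the DT case, with no need for the Paley--Wiener-type holomorphic extension that dominated the proof of \autoref{thm:dt_S_nowheredense}.
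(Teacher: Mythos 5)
Your proposal is correct and follows essentially the same route as the paper's proof: the same moment computation for $\Psi\in\mathcal O$, the same reduction via the radiality of $\psi$ and analyticity of $\hat\psi$ to a vanishing exponential sum on an interval, the same use of the two distinguished points from the PB-asymmetric set to isolate the coefficients $w_1\inner{p_1}{\eta}$ and $w_2\inner{p_2}{\eta}$, and the same contradiction with the linear independence of $\xi$, $p_1$, $p_2$. Your explicit grouping of the remaining points by shared projection values is just a slightly more careful phrasing of the paper's linear-independence step and changes nothing substantive.
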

\begin{proof}
	We have already seen in the proof of \autoref{thm:dt_def_dist} that such a function $\Psi$ is contained in the set of admissible functions $\mathcal{O}$. 
		
	Assume by contradiction that $\Psi$ was PB-symmetric, that is, there exist directions $\xi,\eta\in S^2$ with $\inner\xi\eta=0$ such that
	\begin{equation}\label{eq:asum_Psi_PBsym}
		\inner{\nabla \hat \Psi(\lambda\xi)}{\eta}=0 \text{ for all }\lambda\in\R.
	\end{equation}
Plugging in the gradient of the Fourier transform of $\Psi$ from \autoref{eq:dt_psi_fourier_grad} into this expression, and proceeding analogously to the proof of \autoref{thm:dt_def_dist}, we obtain
\[ \hat\psi(\lambda\xi)\sum_{j=1}^N w_j \e^{-\i\lambda\inner{p_j}{\xi}}\inner{p_j}{\eta}=0. \]
	Since $\psi$ is radial and has compact support, its Fourier transform $\hat\psi$ is radial and analytic and there consequently exists a nonempty interval $I \subset \R$ such that $\hat\psi(\lambda\xi) \neq 0$ for all $\lambda \in I$ because the analyticity would otherwise imply that $\hat\psi$ vanishes everywhere. Therefore, it would have to hold that
\begin{equation}\label{eq:asum_Psi_PBsym_consequence}
\sum_{j=1}^N w_j \e^{-\i\lambda\inner{p_j}{\xi}}\inner{p_j}{\eta}=0\text{ for all }\lambda\in I.
\end{equation}

	Using the properties of the PB-asymmetric point set, we now choose two points $p_1(\xi)$ and $p_2(\xi)$ from $P$ such that $\xi$, $p_1(\xi)$, and $p_2(\xi)$ are linearly independent and we have \autoref{eq:pb-asym-set1}. To keep the notation simple, we want to assume that the set $P$ is enumerated in a way that $p_1(\xi)=p_1$ and $p_2(\xi)=p_2$.

Since the complex exponential functions $\lambda\mapsto \e^{\i \lambda c_j}$, are linearly independent on $I$ for distinct parameters $c_j\in\R$, $j\in\{1,\ldots,N\}$, it then follows that the functions $\e^{-\i\lambda\inner{p_1}{\xi}}$, $\e^{-\i\lambda\inner{p_2}{\xi}}$ and $\sum_{j=3}^Nw_j\e^{-\i\lambda\inner{p_j}{\xi}}$ are linearly independent so that we would get $w_1\inner{p_1}{\eta}=w_2\inner{p_2}{\eta}=0$ from \autoref{eq:asum_Psi_PBsym_consequence}. However, this would imply that $\xi$, $p_1$, and $p_2$ are all in the subspace orthogonal to $\eta$ and therefore, in contradiction to our assumption, not linearly independent.

Hence, \autoref{eq:asum_Psi_PBsym} cannot be fulfilled and $\Psi$ must be PB-asymmetric.
\end{proof}

\subsection{Genericity of PB-asymmetric functions}
As in the diffraction tomography problem, we want to conclude the discussion by ensuring that a unique reconstruction is achievable in the generic case. To this end, we will prove that the set of PB-symmetric functions $\mathcal{S}_{\mathrm{PB}}$ is closed and nowhere dense (see \autoref{def:generic}) in the set $\mathcal{O}$ of admissible functions so that its complement, the set of PB-asymmetric functions, is generic. 

\begin{lemma}\label{thm:pb_S_closed}
	The set $\mathcal{S}_{\mathrm{PB}}$ is a closed subset of $\mathcal{O}$.
\end{lemma}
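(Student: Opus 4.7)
The plan is to mimic the template of \autoref{thm:dt_S_closed}, which is the analogous closedness result for $\mathcal{S}_{\mathrm{DT}}$. The ambient space $\mathcal O$ is already known to be closed in $L^2(\B^3_{r_{\mathrm s}})$ by \autoref{prop:admissible_obj_closed}, so I only need to show that if $(f_j)_{j=1}^\infty\subset\mathcal{S}_{\mathrm{PB}}$ converges in $L^2$ to some $f\in\mathcal O$, then $f\in\mathcal{S}_{\mathrm{PB}}$.

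The first step is to upgrade $L^2$ convergence of the $f_j$ to uniform convergence of the gradients of their Fourier transforms. Since every $f_j$ is supported in the ball $\B^3_{r_{\mathrm s}}$, the same Hölder inequality computation used in \autoref{thm:dt_S_closed} gives
\[
\sup_{k\in\R^3}\norm{\nabla \hat f_j(k)-\nabla \hat f(k)} \le \frac{1}{(2\pi)^{3/2}}\left(\int_{\B^3_{r_{\mathrm s}}}\norm{x}^2\d x\right)^{1/2}\norm{f_j-f}_{L^2},
\]
so $\nabla\hat f_j\to\nabla\hat f$ uniformly on $\R^3$, and in particular the limit is continuous in $k$.

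Next, I invoke PB-symmetry of each $f_j$ to obtain unit vectors $\xi_j,\eta_j\in S^2$ with $\inner{\xi_j}{\eta_j}=0$ and $\inner{\nabla\hat f_j(\lambda\xi_j)}{\eta_j}=0$ for all $\lambda\in\R$. By the compactness of $S^2$, I extract a subsequence $(\xi_{j_\ell},\eta_{j_\ell})_{\ell=1}^\infty$ converging to some $(\xi,\eta)\in S^2\times S^2$; by continuity of the inner product on $\R^3$, the limit still satisfies $\inner{\xi}{\eta}=0$.

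Finally, I fix $\lambda\in\R$ and pass to the limit along this subsequence in
\[
\inner{\nabla\hat f_{j_\ell}(\lambda\xi_{j_\ell})}{\eta_{j_\ell}} = 0.
\]
The left-hand side converges to $\inner{\nabla\hat f(\lambda\xi)}{\eta}$: the uniform convergence of $\nabla\hat f_{j_\ell}$ controls the sampling at the moving point $\lambda\xi_{j_\ell}$ and the continuity of $\nabla\hat f$ handles the replacement $\xi_{j_\ell}\rightsquigarrow\xi$, while $\eta_{j_\ell}\to\eta$ takes care of the second slot. This yields $\inner{\nabla\hat f(\lambda\xi)}\eta=0$ for every $\lambda\in\R$, hence $f\in\mathcal S_{\mathrm{PB}}$. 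I anticipate no real obstacle: the argument is structurally simpler than the DT case because the PB-symmetry condition is evaluated on the straight line $\lambda\mapsto\lambda\xi$ rather than on the curved profile $\mu\mapsto\mu\xi+h(\mu)\eta$, so no care about a branch cut or the function $h$ is needed.
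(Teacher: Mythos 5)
Your proof is correct and follows essentially the same route as the paper: the paper's proof likewise reduces to the argument of \autoref{thm:dt_S_closed}, extracts convergent subsequences of $(\xi_j,\eta_j)$ on the compact sphere, and passes to the limit using the uniform convergence of $\nabla\hat f_j$ to $\nabla\hat f$. Your added remark that continuity of $\nabla\hat f$ handles the moving evaluation point $\lambda\xi_{j_\ell}$ is a small but welcome explicit detail that the paper leaves implicit.
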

\begin{proof}
	The proof goes along the lines of the proof of \autoref{thm:dt_S_closed} for the closure of $\mathcal{S}_{\mathrm{DT}}$. 
	Let $(f_j)_{j=1}^\infty\subset \mathcal{S}_{\mathrm{PB}}$ be a converging sequence in $L^2(\B^3_{r_{\mathrm s}})$ with limit $f \in \mathcal{O}$. By the definition of $\mathcal{S}_{\mathrm{PB}}$, there exist for each $f_j$ directions $\xi_j, \eta_j \in S^2$ with $\inner{\xi_j}{\eta_j}=0$ such that
		\[ \inner{\nabla \hat f_j(\lambda\xi_j)}{\eta_j}= 0\text{ for all }\lambda\in\R. \]
	
	Since $S^2$ is compact, we can find converging subsequences $(\xi_{j_\ell})_{\ell=1}^\infty$ and $(\eta_{j_\ell})_{\ell=1}^\infty$ with limits $\xi, \eta \in S^2$ satisfying $\inner{\xi}{\eta}= 0$. By the uniform convergence of $\nabla \hat{f_j}$ to $\nabla \hat{f}$, it then follows that
	\[ \inner{\nabla \hat f(\lambda\xi)}{\eta}= 0\text{ for all }\lambda\in\R. \]
\end{proof}

Since being nowhere dense is for the closed set $\mathcal S_{\mathrm{PB}}$ equivalent to having an empty interior, we now proceed to demonstrate, analogously to \autoref{thm:dt_S_nowheredense}, that every point in $\mathcal{S}_{\mathrm{PB}}$ is a boundary point. We do so by showing that we can find for every PB-symmetric function $f\in\mathcal S_{\mathrm{PB}}$ an arbitrarily small PB-asymmetric perturbation $\Psi$ as in \autoref{eq:pb_def_dist} such that $f+\Psi$ is PB-asymmetric.

\begin{theorem}\label{thm:pb_S_nowheredense}
	The set $\mathcal{S}_{\mathrm{PB}}$ is nowhere dense in $\mathcal{O}$.
\end{theorem}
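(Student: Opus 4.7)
The plan is to mirror the proof of \autoref{thm:dt_S_nowheredense}, with substantial simplifications since PB-symmetry (see \autoref{eq:def-pbasym}) only involves $\hat f$ along the single line $\lambda\xi$, rather than along a hemisphere. Because $\mathcal S_{\mathrm{PB}}$ is closed (\autoref{thm:pb_S_closed}), we show it has empty interior: assuming for contradiction that some $f \in \mathcal S_{\mathrm{PB}}$ admits an open ball $U_\delta(f) \subset \mathcal S_{\mathrm{PB}}$, apply a smooth approximation step (as in \autoref{thm:dt_smooth_approx}) to produce $\varepsilon > 0$, a smooth $g \in C_{\mathrm c}^\infty(\B^3_{r_{\mathrm s} - 2\varepsilon}) \cap U_\delta(f)$, and $\tilde\delta > 0$ with $U_{\tilde\delta}(g) \subset U_\delta(f) \subset \mathcal S_{\mathrm{PB}}$.

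Following \autoref{rem:dt_asym_points_construction}, adapted to the PB case of \autoref{thm:pb_asym_points_existence}, iteratively construct a PB-asymmetric point set $P = \{p_j\}_{j=1}^N \subset \partial\B^3_{r_{\mathrm s}-\varepsilon}$ placed densely enough that every spherical cap $\set{x \in \partial\B^3_{r_{\mathrm s}-\varepsilon}}{\inner xu > r_{\mathrm s} - 2\varepsilon}$, $u \in S^2$, is well-populated in general position. This ensures that for every $\xi \in S^2$ the subset $P_\xi^+ \coloneqq \set{p \in P}{\inner p\xi > r_{\mathrm s} - 2\varepsilon}$ has pairwise distinct $\xi$-projections and contains two elements $p_1^*, p_2^*$ with $\xi, p_1^*, p_2^*$ linearly independent. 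Pick weights $\{w_j\}\subset\R\setminus\{0\}$ with $\sum_j w_j p_j = 0$ via \autoref{thm:dt_weights}, and for $\tilde\varepsilon \in (0, \varepsilon)$ small enough that $\norm{\Psi_{\tilde\varepsilon}}_{L^2} \leq \tilde\delta$ and the $\xi$-projections in each $P_\xi^+$ are $2\tilde\varepsilon$-separated, set $\Psi_{\tilde\varepsilon}(x) \coloneqq \sum_j w_j \bm 1_{\B^3_{\tilde\varepsilon}}(x - p_j)$. By \autoref{thm:pb_def_dist}, $\Psi_{\tilde\varepsilon} \in \mathcal O$ is PB-asymmetric and $g + \Psi_{\tilde\varepsilon} \in U_{\tilde\delta}(g) \subset \mathcal S_{\mathrm{PB}}$; the contradiction will come from proving that $g + \Psi_{\tilde\varepsilon}$ is in fact PB-asymmetric.

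Suppose $g + \Psi_{\tilde\varepsilon}$ were PB-symmetric: some orthonormal $\xi, \eta \in S^2$ would satisfy $\inner{\nabla(\hat g + \hat\Psi_{\tilde\varepsilon})(\lambda\xi)}\eta = 0$ for all $\lambda \in \R$. Using that $\hat\psi$ is radial (hence $\inner{\nabla\hat\psi(\lambda\xi)}\eta = 0$) and substituting \autoref{eq:dt_psi_fourier_grad} gives
\[ \i\hat\psi(\lambda\xi)\sum_{j=1}^N w_j\inner{p_j}\eta\,\e^{-\i\lambda\inner{p_j}\xi} = \inner{\nabla\hat g(\lambda\xi)}\eta\text{ for all }\lambda\in\R. \]
Applying the inverse one-dimensional Fourier transform in $\lambda$ converts this into the spatial-domain identity
\[ \sum_{j=1}^N w_j\inner{p_j}\eta\,\Phi\bigl(s - \inner{p_j}\xi\bigr) = -G(s)\text{ for all }s\in\R, \]
where $\Phi$ is the $\xi$-marginal of $\bm 1_{\B^3_{\tilde\varepsilon}}$, strictly positive on $(-\tilde\varepsilon, \tilde\varepsilon)$ and vanishing elsewhere, and $G(s) \coloneqq \int_{\{x : \inner x\xi = s\}}\inner x\eta\,g(x)\d\mathrm S(x)$ is supported in $[-(r_{\mathrm s}-2\varepsilon), r_{\mathrm s}-2\varepsilon]$. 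For $s > r_{\mathrm s} - 2\varepsilon$ the right-hand side vanishes, and a successive peeling argument — starting from $s$ just below $\max_j\inner{p_j}\xi + \tilde\varepsilon$ and working inward, using the $2\tilde\varepsilon$-separation of projections in $P_\xi^+$ so that exactly one shifted $\Phi$ contributes at each step — forces $\inner{p_j}\eta = 0$ for every $p_j \in P_\xi^+$. In particular $\inner{p_1^*}\eta = \inner{p_2^*}\eta = 0$, which combined with $\inner\xi\eta = 0$ and the linear independence of $\xi, p_1^*, p_2^*$ forces $\eta = 0$, a contradiction.

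The main obstacle is to carry out the iterative construction of $P$ so that it jointly satisfies the general-position conditions of \autoref{thm:pb_asym_points_existence} (needed to make $\Psi_{\tilde\varepsilon}$ PB-asymmetric via \autoref{thm:pb_def_dist}), a sufficient density on the sphere so that every cap contains multiple points, and the linear-independence property of the top-cap points in every direction $\xi$. Since each iterative step in \autoref{rem:dt_asym_points_construction} need only avoid finitely many two-dimensional subspaces on a sphere, while the cap and linear-independence properties are open conditions on each new point, all requirements can be met simultaneously.
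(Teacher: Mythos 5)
Your overall strategy---closedness plus an empty-interior argument via an arbitrarily small PB-asymmetric perturbation $\Psi_{\tilde\varepsilon}$---matches the paper's, and your spatial-domain reformulation (Fourier inversion turning the PB-symmetry identity into $\sum_j w_j\inner{p_j}\eta\,\Phi(s-\inner{p_j}\xi)=-G(s)$, with $\Phi$ the slice-area profile of $\bm 1_{\B^3_{\tilde\varepsilon}}$ and $G$ supported in $[-(r_{\mathrm s}-2\varepsilon),r_{\mathrm s}-2\varepsilon]$) is correct as far as it goes. The gap is that the point configuration your peeling argument needs does not exist. You require, for \emph{every} $\xi\in S^2$, that $P_\xi^+=\set{p\in P}{\inner p\xi>r_{\mathrm s}-2\varepsilon}$ contain at least two points (so that $\xi,p_1^*,p_2^*$ can be linearly independent) \emph{and} that the $\xi$-projections of the points of $P_\xi^+$ be pairwise distinct, even $2\tilde\varepsilon$-separated. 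These demands are incompatible: if $p_i\ne p_j$ lie in a common cap $C_\varepsilon(\xi_0)$, both are within angular distance $\alpha\coloneqq\arccos\tfrac{r_{\mathrm s}-2\varepsilon}{r_{\mathrm s}-\varepsilon}$ of $\xi_0$, so their mutual angle satisfies $\theta<2\alpha$, and for the midpoint direction $m\coloneqq(p_i+p_j)/\norm{p_i+p_j}$ one gets $\inner{p_i}m=\inner{p_j}m=(r_{\mathrm s}-\varepsilon)\cos(\theta/2)>(r_{\mathrm s}-\varepsilon)\cos\alpha=r_{\mathrm s}-2\varepsilon$. Hence $p_i,p_j\in P_m^+$ with \emph{equal} projections. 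Since putting two points into every cap forces some pair into a common cap, the distinct-projection property fails for the corresponding $m$---and the symmetry direction $\xi$ handed to you by the contradiction hypothesis may be exactly such an $m$, where the first peeling step only yields $w_i\inner{p_i}\eta+w_j\inner{p_j}\eta=0$ and the argument stalls. Note also that your peeling can only reach points with $\inner p\xi>r_{\mathrm s}-2\varepsilon$ (inside that range $G$ need not vanish), which is why you cannot fall back on the two distinguished points of \autoref{def:pb_asym_points}: they need not lie in the outer caps.

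The paper sidesteps all of this by staying in the frequency domain: combining \autoref{eq:pb_nowheredense_contradiction} with the Paley--Wiener bound gives $\abs[\big]{\hat\psi(\lambda\xi)\sum_jw_j\e^{-\i\lambda\inner{p_j}\xi}\inner{p_j}\eta}\le C\abs\lambda^2(1+\abs\lambda)^{-3}$; the trigonometric polynomial is not identically zero by \autoref{thm:pb_def_dist} (which uses only the cap-free, satisfiable point-set condition of \autoref{def:pb_asym_points}), and since $\lambda^2\hat\psi(\lambda\xi)$ behaves like $-\sqrt{2/\pi}\,\tilde\varepsilon\cos(\tilde\varepsilon\lambda)$, the left-hand side is asymptotically a nonzero almost periodic function (\autoref{thm:exp-almost-period}, \autoref{thm:product_almost_periodic}) and cannot decay---no sphere placement, cap population, or projection-separation is needed. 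To rescue your spatial route you would have to prove a genuine substitute for your construction, e.g.\ that for every $\xi$ the outer caps contain two points whose projections are isolated from \emph{all} others and which are linearly independent together with $\xi$; that is a nontrivial claim of the same flavor as \autoref{thm:dt_max_proj} and is not delivered by asserting that "all requirements can be met simultaneously."
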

\begin{proof}
	Let us assume by contradiction that there was a PB-symmetric function $f\in\mathcal{S}_{\mathrm{PB}}$ in the interior of $\mathcal S_{\mathrm{PB}}$. We could thus find an open ball $U_\delta(f)\subset\mathcal O$ with some radius $\delta>0$ around $f$ which is contained in $\mathcal{S}_{\mathrm{PB}}$.
By \autoref{thm:dt_smooth_approx}, we could then find a parameter $\varepsilon>0$ and a function $g\in C_{\mathrm c}^\infty(\B^3_{r_{\mathrm s}})\cap U_\delta(f)$ with $\supp(g)\subset \B^3_{r_{\mathrm s}-2\varepsilon}$. Let $\tilde\delta>0$ be selected such that 
	\begin{equation}\label{eq:pb_nested_interior_point}
		U_{\tilde{\delta}}(g)\subset U_\delta(f)\subset\mathcal{S}_{\mathrm{PB}}.
	\end{equation}
	
	We now pick a PB-asymmetric set $P\coloneqq\{p_j\}_{j=1}^N$ for which we have corresponding weights $\{w_j\}_{j=1}^N\subset\R\setminus\{0\}$ with $\sum_{j=1}^Nw_jp_j=0$ and consider a function $\Psi_{\tilde\varepsilon}$ with the same structure as in \autoref{eq:pb_def_dist}, where we choose for some $\tilde\varepsilon<\varepsilon$ for $\psi$ the indicator function $\psi\coloneqq\bm{1}_{\B^3_{\tilde\varepsilon}}$ of the ball with radius $\tilde\varepsilon$:
	\begin{equation*}
		\Psi_{\tilde\varepsilon}(x)\coloneqq\sum_{j=1}^{N}w_j \bm{1}_{\B^3_{\tilde\varepsilon}}(x-p_j).
	\end{equation*}
	According to \autoref{thm:dt_def_dist}, $\Psi_{\tilde\varepsilon}\in\mathcal O$ is PB-asymmetric. Moreover, we can select $\tilde\varepsilon$ so small that $\norm{\Psi_{\tilde\varepsilon}}_{L^2}\leq\tilde\delta$.
	
	We now prove that $g+\Psi_{\tilde\varepsilon}$ is also PB-asymmetric in order to get a contradiction to \autoref{eq:pb_nested_interior_point}. 
	Assume that $g+\Psi_{\tilde\varepsilon}$ was PB-symmetric so that we had for some pair of directions $\xi,\eta\in S^2$ with $\inner{\xi}{\eta}=0$ that
		\[\inner{\nabla (\hat g+\hat\Psi_{\tilde\varepsilon})(\lambda\xi)}{\eta}=0\text{ for all }\lambda\in\R.\]
	In particular, it had to hold that
	\begin{equation}\label{eq:pb_nowheredense_contradiction}
	\abs{\inner{\nabla \hat g(\lambda\xi)}{\eta}}= \abs{\inner{\nabla \hat\Psi_{\tilde\varepsilon}(\lambda\xi)}{\eta}}\text{ for all }\lambda\in\R.
	\end{equation}
	Since $\nabla\hat g$ is the Fourier transform of the smooth and compactly supported function $\tilde g\in C_{\mathrm c}^\infty(\B_{r_{\mathrm s}}^3)$, $\tilde g(x)\coloneqq-\i xg(x)$, we know from the Paley--Wiener theorem (see \cite[Theorem 7.3.1]{Hoe03}) that there exists a real constant $C>0$ such that
	\begin{equation}\label{eq:pb_paley_wiener}
		\abs{\inner{\nabla \hat g(\lambda\xi)}{\eta}}\leq C(1+\abs{\lambda})^{-3} \text{ for all }\lambda\in\R.
	\end{equation}
	Moreover, the fact that the indicator function $\psi$ is radial implies because of $\inner{\xi}{\eta}=0$ that
	\[\abs{\inner{\nabla \hat\Psi_{\tilde\varepsilon}(\lambda\xi)}{\eta}}=\abs*{\hat\psi(\lambda\xi)\sum_{j=1}^N w_j \e^{-\i\lambda\inner{p_j}{\xi}}\inner{p_j}{\eta}},\]
	where the Fourier transform $\hat\psi$ of the indicator function of $\B_{\tilde\varepsilon}^3$ is explicitly given by
	\[\hat\psi(\kappa)=\sqrt{\frac{2}{\pi}}\frac{\sin(\tilde\varepsilon\norm\kappa)-\tilde\varepsilon\norm\kappa\cos(\tilde\varepsilon\norm\kappa)}{\norm\kappa^3}\text{ for all }\kappa\in\R^3\setminus\{0\}.\]
	With the estimate from \autoref{eq:pb_paley_wiener}, \autoref{eq:pb_nowheredense_contradiction} would then give us the inequality
	\[\abs*{\sqrt{\frac{2}{\pi}}\left(\frac{\sin(\tilde\varepsilon\abs{\lambda})}{\abs{\lambda}}-\tilde\varepsilon\cos(\tilde\varepsilon\abs{\lambda})\right)\sum_{j=1}^N w_j \e^{-\i\lambda\inner{p_j}{\xi}}\inner{p_j}{\eta}}\leq C\frac{\abs{\lambda}^{2}}{(1+\abs{\lambda})^{3}} \text{ for all } \lambda\in\R.\] 

	Taking the limit $\lambda\to\infty$ herein, both the right-hand side and $\frac{\sin(\tilde\varepsilon\abs{\lambda})}{\abs{\lambda}}$ converge to zero.  According to \autoref{thm:pb_def_dist}, $\sum_{j=1}^N w_j \e^{-\i\lambda\inner{p_j}{\xi}}\inner{p_j}{\eta}$ does not vanish identically and by \autoref{thm:exp-almost-period} and \autoref{thm:product_almost_periodic}, its product with a periodic function is an almost periodic function. Thus, we arrive at a contradiction, as the right-hand side becomes arbitrarily small as $\lambda$ increases, while the left-hand side, being almost periodic, attains values away from zero for arbitrarily large $\lambda$.
\end{proof}

\section{Conclusion}
We have investigated the uniqueness of rotational motion reconstruction for trapped biological samples undergoing unknown, time-dependent rotations within the frameworks of two approximate models: diffraction tomography under the Born approximation and parallel-beam tomography. For diffraction tomography, we introduced the concept of DT-asymmetry, which guarantees unique recovery of all rotational parameters using the infinitesimal common circle method. For parallel-beam tomography, we derived the necessary conditions on the motion that enable recovery of the rotation via the infinitesimal common line method and defined the property of PB-asymmetry, which enables the unique determination of all motion parameters up to an orthogonal transformation.
Our results show that, while non-uniqueness may arise for certain symmetric objects for each model, the set of such objects is nowhere dense within the space of admissible samples. This indicates that, for generic objects, reconstructing rotational motion from measurement data is achievable.

These findings provide a theoretical foundation for the use of infinitesimal motion reconstruction methods as a preprocessing step in three-dimensional tomographic imaging of optically or acoustically trapped particles. By ensuring that the recovered motion is unique in practically all relevant scenarios, our results support the reliability of these methods in experimental settings where the sample’s orientation is not precisely controlled. Our definitions of PB- and DT-asymmetry could also assist in the design of 3D objects for simulations, ensuring that infinitesimal methods remain robust against failures caused by unexpected symmetries in the artificial sample.

\appendix
\section{Almost periodic functions}
The concept of almost periodic functions was introduced by Harald Bohr in the 1920s as a generalization of periodic functions \cite{Boh25}. They are characterized by the property that, for any desired level of accuracy, there exist almost periods after which the function nearly repeats its values. 
\begin{definition} \label{def:almost-period}
	A function $f\colon\R\to\C$ is called almost periodic if there exists for every $\delta>0$ a parameter $\ell_\delta>0$ such that every interval $I$ of length $\ell_\delta$ contains a value $\tau\in I$ with the property
	\[ |f(x+\tau)-f(x)|<\delta \text{ for all }x\in\R. \]
	The number $\tau$ is called the $\delta$-almost period of $f$.
\end{definition}

Clearly, every periodic function is almost periodic, and it is shown in \cite[Satz III and Satz IV]{Boh25} that the sum and the product of two almost periodic functions is again almost periodic.

\begin{proposition}\label{thm:product_almost_periodic}
Let $f_1,f_2\colon\R\to\C$ be almost periodic functions. Then, $f_1+f_2$ and $f_1f_2$ are almost periodic, too.
\end{proposition}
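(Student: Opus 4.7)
The plan is to work directly from \autoref{def:almost-period}, reducing both statements to a common almost periods lemma and then handling the product case by exploiting boundedness. As a preliminary I would record two standard consequences of \autoref{def:almost-period}, both due to Bohr \cite{Boh25}: every almost periodic function is bounded and uniformly continuous on $\R$. Boundedness follows by picking, for any $x \in \R$, a $1$-almost period $\tau$ in the interval $[-x, -x + \ell_1]$, which gives $|f(x)| \le |f(x+\tau)| + 1 \le \sup_{[0,\ell_1]} |f| + 1$, where the supremum is finite by continuity. Uniform continuity follows by an analogous translation trick combined with the compact uniform continuity of $f$ on bounded intervals.

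The key step is the \emph{common almost periods lemma}: for every $\delta > 0$ there exists $\ell > 0$ such that every interval of length $\ell$ contains a value $\tau$ which is simultaneously a $\delta$-almost period of $f_1$ \emph{and} of $f_2$. I would establish this via the Bohr--Bochner characterization, namely that $f$ is almost periodic if and only if the family of translates $\{f(\cdot + t) : t \in \R\}$ is relatively compact in $(C_b(\R), \|\cdot\|_\infty)$. Applying the forward direction to both $f_1$ and $f_2$ and extracting diagonally, any sequence $(t_n) \subset \R$ admits a subsequence along which $f_1(\cdot + t_{n_k})$ and $f_2(\cdot + t_{n_k})$ converge uniformly simultaneously. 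The reverse direction, applied to the $\C^2$-valued map $x \mapsto (f_1(x), f_2(x))$ equipped with any norm on $\C^2$, then yields the asserted relatively dense set of common almost periods.

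Given the lemma, both assertions reduce to routine triangle inequalities. For the sum, I apply the lemma with $\delta/2$ in place of $\delta$ to obtain $\ell > 0$ such that every interval of length $\ell$ contains a common $\delta/2$-almost period $\tau$, and estimate
\[ |(f_1 + f_2)(x + \tau) - (f_1 + f_2)(x)| \le |f_1(x+\tau) - f_1(x)| + |f_2(x+\tau) - f_2(x)| < \delta. \]
For the product, I set $M \coloneqq \max(\sup_\R |f_1|, \sup_\R |f_2|)$, which is finite by the preliminary boundedness observation, apply the lemma with $\delta/(2M)$ in place of $\delta$, and estimate
\[ |f_1(x+\tau)f_2(x+\tau) - f_1(x)f_2(x)| \le M\big(|f_1(x+\tau) - f_1(x)| + |f_2(x+\tau) - f_2(x)|\big) < \delta. \]

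The main obstacle will be the common almost periods lemma itself, since the naive attempt of intersecting the individual sets of $\delta$-almost periods of $f_1$ and $f_2$ does not work: relative density of two sets need not imply relative density of their intersection. Establishing the lemma via the Bohr--Bochner compactness characterization (or, equivalently, by Bohr's original finite-net argument) is the one place where genuine work is required; once it is in hand everything else is bookkeeping.
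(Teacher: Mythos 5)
Your outline is correct, but note that the paper does not actually prove this proposition: it is stated with a citation to Bohr's original Sätze III and IV in \cite{Boh25}, so there is no in-paper argument to compare against. What you propose is the standard modern proof, and you have correctly located the only genuine difficulty, namely the common-almost-periods lemma; the naive intersection of the two relatively dense sets of $\delta$-almost periods indeed fails, and routing around this via the Bohr--Bochner compactness characterization (applied to the $\C^2$-valued map $x\mapsto(f_1(x),f_2(x))$) is legitimate, though it concentrates all the work in a theorem you do not prove --- Bohr's own route in \cite{Boh25} is a direct argument about translation numbers that predates Bochner's criterion, so your path is genuinely different from the cited one. Two small points to tidy up: your boundedness argument (and the Arzel\`a--Ascoli step inside the Bochner characterization) uses continuity of $f$, which \autoref{def:almost-period} as stated in the paper does not require, so you should either add continuity as a standing hypothesis or note that it is part of Bohr's definition; and in the product estimate you should dispose of the degenerate case $M=0$ separately before dividing by $2M$. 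With those caveats the sum and product estimates are exactly the routine triangle inequalities you describe, and the proof is sound.
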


In particular, this implies that all trigonometric polynomials are almost periodic functions.

\begin{corollary}\label{thm:exp-almost-period}
Every function $f\colon\R\to\C$ of the form 
	\[ f(x)\coloneqq\sum_{j=1}^N a_j \e^{\i b_j x}\]
with coefficients $a_j\in\C$ and $b_j\in\R$, $j\in\{1,\dots,N\}$, $N\in\N$, is almost periodic.
\end{corollary}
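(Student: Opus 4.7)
The plan is to build the result from the atomic functions $x\mapsto\e^{\i b_j x}$ by invoking the closure properties recorded in \autoref{thm:product_almost_periodic}, so the real content lies in verifying the base case that a single complex exponential is almost periodic in the sense of \autoref{def:almost-period}.

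First, I would verify that each elementary function $f_j\colon x\mapsto\e^{\i b_j x}$ is almost periodic. If $b_j=0$, then $f_j$ is constant, so every $\tau\in\R$ is trivially a $\delta$-almost period and \autoref{def:almost-period} is satisfied with, say, $\ell_\delta=1$ for every $\delta>0$. If $b_j\ne0$, then $f_j$ is periodic with period $T_j\coloneqq\frac{2\pi}{\abs{b_j}}$; every interval of length $T_j$ contains at least one integer multiple of $T_j$, and for such a $\tau$ we have $\abs{f_j(x+\tau)-f_j(x)}=0<\delta$ for all $x\in\R$. Hence $\ell_\delta\coloneqq T_j$ works for every $\delta>0$, and $f_j$ is almost periodic.

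Next, each summand $x\mapsto a_j\e^{\i b_j x}$ is almost periodic: the constant function $x\mapsto a_j$ is almost periodic by the previous step (with $b_j=0$ and the constant value $a_j\in\C$), and by \autoref{thm:product_almost_periodic} the product of two almost periodic functions is almost periodic. An induction on $N$ using the sum part of \autoref{thm:product_almost_periodic} then yields that $f=\sum_{j=1}^N a_j\e^{\i b_j x}$ is almost periodic: the base case $N=1$ was just established, and in the inductive step one writes $\sum_{j=1}^{N+1}a_j\e^{\i b_j x}=\sum_{j=1}^N a_j\e^{\i b_j x}+a_{N+1}\e^{\i b_{N+1}x}$ as the sum of two almost periodic functions.

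There is no serious obstacle: the corollary is essentially an immediate consequence of \autoref{thm:product_almost_periodic}. The only delicate point is verifying the almost-periodicity of a single $\e^{\i b x}$ directly from \autoref{def:almost-period}, and this is straightforward because periodic continuous functions are trivially almost periodic by taking (integer multiples of) the period as the $\delta$-almost period for every $\delta>0$.
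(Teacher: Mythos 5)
Your proof is correct and is exactly the argument the paper intends: the corollary is stated without proof as an immediate consequence of \autoref{thm:product_almost_periodic}, and your verification that a single complex exponential is (trivially, being periodic or constant) almost periodic, followed by closure under products with constants and finite sums, is the standard route. No gaps.
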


In fact, the space of almost periodic functions is exactly the closure of the set of these complex trigonometric polynomials under the supremum norm, see \cite[Theorem I, p.226]{BesBoh31}. 

This almost periodicity allows us to obtain the linear independence of a certain class of exponential functions.

\begin{lemma}\label{thm:dt_compl_exp_linindep}
Let $N\in\N$, $k_0\in(0,\infty)$, $h$ be the map given by \autoref{eq:dt_def_h}, $I\subset(-k_0,k_0)$ be an arbitrary nonempty interval, and $(a_j,b_j)$ be distinct points in $\R^2$ for $j\in\{1,\ldots,N\}$.
	
The functions
\[ \left(\mu\mapsto\mu\e^{\i(a_j \mu+b_jh(\mu))}\right)_{j=1}^N,\;\left(\mu\mapsto h(\mu)\e^{\i(a_j \mu+b_jh(\mu))}\right)_{j=1}^N \]
on $I$ are then linearly independent.
\end{lemma}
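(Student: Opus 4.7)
The plan is to prove this by contradiction. Suppose there exist complex coefficients $\alpha_j,\beta_j$, $j\in\{1,\ldots,N\}$, such that
\[
\sum_{j=1}^N(\alpha_j\mu+\beta_jh(\mu))\e^{\i(a_j\mu+b_jh(\mu))}=0\text{ for all }\mu\in I.
\]
Each summand extends to a holomorphic function on $\C\setminus\Gamma$ with $\Gamma\coloneqq(-\infty,-k_0]\cup[k_0,\infty)$, the branch cut of $h$ for the principal branch of the square root (as already used in the proof of \autoref{thm:dt_S_nowheredense}). Since $\C\setminus\Gamma$ is connected and the relation holds on the open subset $I$, the identity theorem extends it to all of $\C\setminus\Gamma$.

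The main idea is to send $\mu$ to infinity along a suitable complex ray and to peel off the term with the largest exponential growth. A direct expansion of the principal branch yields, for $\mu$ in the open upper half-plane,
\[
h(\mu)=-\i\mu-k_0+O(\abs\mu^{-1})\text{ as }\abs\mu\to\infty.
\]
Writing $\gamma_j\coloneqq a_j-\i b_j\in\C$ and specialising to the ray $\mu=r\e^{\i\theta_0}$ with $\theta_0\in(0,\pi)$ and $r>0$, we then obtain
\[
a_j\mu+b_jh(\mu)=\gamma_j\mu-b_jk_0+O(r^{-1})\quad\text{and}\quad\alpha_j\mu+\beta_jh(\mu)=(\alpha_j-\i\beta_j)\mu-\beta_jk_0+O(r^{-1}),
\]
together with
\[
\abs*{\e^{\i(a_j\mu+b_jh(\mu))}}=\e^{rL_{\theta_0}(a_j,b_j)+O(1)}\text{ with }L_{\theta_0}(a,b)\coloneqq b\cos\theta_0-a\sin\theta_0.
\]
Since the pairs $(a_j,b_j)$ are pairwise distinct, each equation $L_{\theta_0}(a_i,b_i)=L_{\theta_0}(a_j,b_j)$ with $i\ne j$ determines only finitely many $\theta_0\in(0,\pi)$, so a generic choice of $\theta_0$ makes all $L_{\theta_0}(a_j,b_j)$ strictly distinct; after relabelling we may assume that $L_{\theta_0}(a_1,b_1)$ is the strict maximum.

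Dividing the extended identity by $\e^{\i(a_1\mu+b_1h(\mu))}$, every summand with $j\ge 2$ picks up a factor whose modulus equals $\e^{r(L_{\theta_0}(a_j,b_j)-L_{\theta_0}(a_1,b_1))+O(1)}$, which decays exponentially in $r$; combined with the at-most linearly growing prefactor $\alpha_j\mu+\beta_jh(\mu)$, these terms tend to zero as $r\to\infty$. The $j=1$ contribution reduces to $(\alpha_1-\i\beta_1)\mu-\beta_1k_0+O(r^{-1})$, and matching first the $\mu$-term and then the constant term along $\mu=r\e^{\i\theta_0}$ as $r\to\infty$ successively forces $\alpha_1-\i\beta_1=0$ and $\beta_1k_0=0$, hence $\alpha_1=\beta_1=0$. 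Dropping this vanishing summand and repeating the same argument with the same $\theta_0$ for the next dominant index eliminates all remaining coefficients one by one.

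The main technical obstacle is keeping careful track of the various $O(r^{-1})$ errors when the polynomial prefactor and the exponential are expanded simultaneously, so that the multiplicative corrections do not contaminate the identification of the $\mu^1$ and $\mu^0$ coefficients along the chosen ray. Once the branch of $\sqrt\cdot$ in the upper half-plane is fixed and a generic $\theta_0$ is chosen, the two relations $\alpha_1=\i\beta_1$ and $\beta_1=0$ come from two distinct orders in the asymptotic expansion, and the iterative peeling then completes the proof.
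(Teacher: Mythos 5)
Your proof is correct, and it takes a genuinely different route from the paper's at the decisive step. Both arguments start identically: assume a nontrivial linear relation, extend it holomorphically across a branch cut of $h$, and kill coefficients by comparing growth rates at infinity. The paper, however, sends $\mu\to\infty$ along the \emph{real} ray $(k_0,\infty)$, where $\abs{\e^{\i a_j\mu}}=1$ and only $\e^{b_j\sqrt{\mu^2-k_0^2}}$ separates the terms; consequently all indices sharing the maximal $b_j$ dominate simultaneously, and the paper must invoke the theory of almost periodic functions (a nonzero trigonometric polynomial cannot tend to zero at infinity, \autoref{thm:exp-almost-period} and \autoref{thm:product_almost_periodic}) to annihilate that whole group at once. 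You instead go out along a generic ray $r\e^{\i\theta_0}$ in the upper half-plane, where the growth rates $L_{\theta_0}(a_j,b_j)=b_j\cos\theta_0-a_j\sin\theta_0$ are pairwise distinct precisely because the pairs $(a_j,b_j)$ are distinct, so a single term dominates and elementary asymptotics suffice — no almost periodicity needed. Each approach then extracts the two relations $\alpha_j=\i\beta_j$ and $\beta_j=0$ from two successive orders of the expansion (your $\mu^1$ and $\mu^0$ coefficients correspond to the paper's two rounds of the almost-periodicity argument), exploiting $h(\mu)+\i\mu\to -k_0\ne0$ in both cases. Your version buys a more self-contained and elementary proof; the paper's version is what its Appendix A machinery was built for and avoids any genericity choice. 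Two cosmetic points: $I$ is not an open subset of $\C$, so you should justify the extension via the identity theorem by noting that $I$ has an accumulation point in the connected domain $\C\setminus\Gamma$ (which is all that is needed); and the claim that each coincidence $L_{\theta_0}(a_i,b_i)=L_{\theta_0}(a_j,b_j)$ excludes only finitely many $\theta_0$ deserves the one-line remark that it is a nontrivial homogeneous linear equation in $(\cos\theta_0,\sin\theta_0)$.
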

\begin{proof}
We assume by contradiction that the functions were linearly dependent. Then, there would exist complex coefficients $c_j,d_j\in\C$ such that
\[F(\mu)\coloneqq\sum_{j=1}^Nc_j \mu \e^{\i(a_j \mu+b_jh(\mu))} + \sum_{j=1}^N d_j h(\mu) \e^{\i(a_j \mu+b_jh(\mu))}= 0\text{ for all }\mu\in I,\]
where we presume that $(c_j,d_j)\ne(0,0)$ for every $j\in\{1,\ldots,N\}$ by otherwise dropping these terms from the sum.

The function $h$ can be analytically extended to the complex plane, except along a branch cut connecting $-k_0$ and $k_0$, which we explicitly choose as $\Gamma \coloneqq \set{z \in \C}{\Im(z) > 0,\,\abs z = k_0}$. Consequently, $F$ also extends holomorphically to $\C \setminus \Gamma$ so that we would have
\[F(z)=0 \text{ for all }z\in\C\setminus\Gamma.\]

For $\mu \in (k_0, \infty)$, we then get the extension $h(\mu) = -\i\sqrt{\mu^2 - k_0^2}-k_0$ and we would thus get
\[\sum_{j=1}^N(c_j \mu+d_j h(\mu)) \e^{\i(a_j \mu-k_0 b_j)}\e^{b_j\sqrt{\mu^2-k_0^2}} = 0\text{ for all }\mu\in(k_0,\infty).\]
Assuming that the sequence $(b_j)_{j=1}^N$ is ordered decreasingly and letting $n\in\{1,\ldots,N\}$ be the index with $b_1=b_n>b_j$ for all $j>n$, we write this equation in the form
\begin{equation}\label{eq:linear_dependence_condition}
\begin{split}
&\e^{-\i k_0 b_1}\sum_{j=1}^n(c_j-\i d_j) \e^{\i a_j \mu} + \left(\frac{h(\mu)}\mu+\i\right)\e^{-\i k_0 b_1}\sum_{j=1}^nd_j\e^{\i a_j \mu} \\ 
&\qquad+\sum_{j=n+1}^N\left(c_j+d_j\frac{h(\mu)}\mu\right) \e^{\i(a_j \mu-k_0 b_j)}\e^{(b_j-b_1)\sqrt{\mu^2-k_0^2}}=0\text{ for all }\mu\in(k_0,\infty).
\end{split}
\end{equation}
The first term herein is, according to \autoref{thm:exp-almost-period}, almost periodic, the second term converges because of $\lim_{\mu\to\infty}(h(\mu)+\i\mu)=-k_0$ to zero in the limit $\mu\to\infty$, and so does the last term as $b_j - b_1 < 0$ for all $j > n$. The almost periodic first term would thus have to tend to zero at infinity, too, which would only be possible if it is constantly equal to zero. We would therefore arrive at the condition
\[ \sum_{j=1}^n(c_j-\i d_j) \e^{\i a_j z}=0\text{ for all }z\in\C, \]
which would imply that $c_j=\i d_j$ for all $j\in\{1,\ldots,n\}$.

So the first term in \autoref{eq:linear_dependence_condition} would disappear and the equation would reduce to
\[ \e^{-\i k_0 b_1}\sum_{j=1}^nd_j\e^{\i a_j \mu}+\sum_{j=n+1}^N\frac{c_j\mu+d_jh(\mu)}{h(\mu)+\i\mu}\e^{\i(a_j \mu-k_0 b_j)}\e^{(b_j-b_1)\sqrt{\mu^2-k_0^2}}=0\text{ for all }\mu\in(k_0,\infty). \]
As before, the first term is an almost periodic function whereas the second term still converges to zero for $\mu\to\infty$ so that we would require
\[ \sum_{j=1}^nd_j \e^{\i a_j z}=0\text{ for all }z\in\C, \]
which would only be fulfilled if $d_j=0$ for all $j\in\{1,\ldots,n\}$. Thus, we would have $c_j=d_j=0$ for all $j\in\{1,\ldots,n\}$, which contradicts our assumption.
\end{proof}

\section{Technical results for the proof of \autoref{thm:dt_S_nowheredense}}

The first step in the proof of \autoref{thm:dt_S_nowheredense} uses that if $f\in\mathcal O$ was an inner point of the set $\mathcal S_{\mathrm{DT}}$ of DT-symmetric functions, then there would exist a smoothed version $g\in C_{\mathrm c}^\infty(\B^3_{r_{\mathrm s}})\cap\mathcal O$ of $f$ which is also an inner point of $\mathcal S_{\mathrm{DT}}$.

\begin{lemma}\label{thm:dt_smooth_approx}
	Let $f\in\mathcal{O}$. Then for every $\delta>0$, there exists an $\varepsilon>0$ and a function $g\in C_{\mathrm c}^\infty(\B^3_{r_{\mathrm s}})\cap\mathcal{O}$ with $\supp(g)\subset \B^3_{r_{\mathrm s}-2\varepsilon}$ and $\norm{f-g}_{L^2}<\delta$.
\end{lemma}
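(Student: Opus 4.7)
The plan is to combine the standard $L^2$ approximation of $f$ by a smooth, compactly supported function with a finite-dimensional correction that restores the vanishing first moments. First, I would pick a small $\varepsilon>0$ and truncate $f$ to the slightly smaller ball $\B^3_{r_{\mathrm s}-3\varepsilon}$ (either by multiplication with an indicator or, cleaner, a smooth cutoff), obtaining $f_\varepsilon$ with $\norm{f-f_\varepsilon}_{L^2}\to 0$ as $\varepsilon\to0$. Then I would mollify with a fixed radial bump function $\phi_\varepsilon\in C^\infty_{\mathrm c}(\B^3_\varepsilon)$ of unit integral to get $g_\varepsilon\coloneqq f_\varepsilon\ast\phi_\varepsilon\in C^\infty_{\mathrm c}(\B^3_{r_{\mathrm s}-2\varepsilon})$ with $\norm{f_\varepsilon-g_\varepsilon}_{L^2}\to 0$.

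The only genuine obstacle is that $g_\varepsilon$ is in general not in $\mathcal{O}$. I would compute the defect vector
\[ v_\varepsilon\coloneqq\int_{\R^3}x\,g_\varepsilon(x)\d x. \]
By Fubini and the substitution $z=x-y$, together with the radiality of $\phi_\varepsilon$ (so that $\int z\phi_\varepsilon(z)\d z=0$), one obtains $v_\varepsilon=\int x f_\varepsilon(x)\d x$. Since $f\in\mathcal O$, this equals $-\int x\bigl(f(x)-f_\varepsilon(x)\bigr)\d x$, which by Hölder's inequality is bounded by $\bigl(\int_{\B^3_{r_{\mathrm s}}}\norm x^2\d x\bigr)^{1/2}\norm{f-f_\varepsilon}_{L^2}\to 0$.

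To absorb this small error, I would fix, once and for all, three smooth compactly supported functions $\psi_1,\psi_2,\psi_3\in C^\infty_{\mathrm c}(\B^3_{r_{\mathrm s}/2})$ whose first-moment matrix $M_{ij}\coloneqq\int x_i\psi_j(x)\d x$ is invertible (concretely, three radial bumps centred at three linearly independent small vectors). For all sufficiently small $\varepsilon$ their supports lie in $\B^3_{r_{\mathrm s}-2\varepsilon}$. Setting $c\coloneqq -M^{-1}v_\varepsilon$ and $h_\varepsilon\coloneqq\sum_{j=1}^3 c_j\psi_j$, we have $\int x h_\varepsilon(x)\d x=-v_\varepsilon$ and $\norm{h_\varepsilon}_{L^2}\le\norm{M^{-1}}\norm{v_\varepsilon}\max_j\norm{\psi_j}_{L^2}\to 0$.

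Finally I set $g\coloneqq g_\varepsilon+h_\varepsilon$. By construction $g\in C^\infty_{\mathrm c}(\B^3_{r_{\mathrm s}-2\varepsilon})\cap\mathcal O$, and
\[ \norm{f-g}_{L^2}\le\norm{f-f_\varepsilon}_{L^2}+\norm{f_\varepsilon-g_\varepsilon}_{L^2}+\norm{h_\varepsilon}_{L^2}, \]
all three of which tend to zero with $\varepsilon$. Choosing $\varepsilon$ small enough makes the right-hand side smaller than $\delta$. The only subtlety worth highlighting is the moment correction; everything else is standard truncation and mollification.
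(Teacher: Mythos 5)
Your proof is correct and follows essentially the same strategy as the paper's: truncation to a smaller ball, mollification with a radial kernel, and a small finite-dimensional correction that restores the vanishing first moments, with all three errors controlled by Hölder's inequality. The only immaterial differences are the ordering (the paper corrects the moments \emph{before} mollifying, using the correction functions $x_j\phi(x)$ for a single radial $\phi$, whose moment matrix is diagonal, whereas you mollify first and then correct with translated radial bumps whose moment matrix is invertible by the linear independence of the centres) --- both orderings work because the radial mollifier neither introduces nor disturbs first moments.
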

\begin{proof}
	Let $\delta>0$. We choose a smooth cutoff function $\chi_\varepsilon\in C_{\mathrm c}^\infty(\B^3_{r_{\mathrm s}})$ with $\supp(\chi_\varepsilon)\subset \B^3_{r_{\mathrm s}-3\varepsilon}$ and $\chi_\varepsilon(x)\in[0,1]$ for all $x\in\B^3_{r_{\mathrm s}}$ such that $\chi_\varepsilon(x)=1$ for $\abs{x}<r_{\mathrm s}-4\varepsilon$ and truncate the function $f$ to $f_\chi\coloneqq f\chi_\varepsilon$, which then fulfills $\supp(f_\chi)\subset \B^3_{r_{\mathrm s}-3\varepsilon}$. Hereby, we choose $\varepsilon\in(0,\frac18r_{\mathrm s})$ sufficiently small such that
	\begin{equation}\label{eq:ineq1}
		\norm{f-f_\chi}_{L^2}\leq \int_{\B^3_{r_{\mathrm s}}\setminus\B^3_{r_{\mathrm s}-4\varepsilon}}\abs{f(x)}^2\d x \le \frac{\delta}{3C},
	\end{equation}
where we will specify the constant $C\ge1$ later in \autoref{eq:constant_smooth_approx}.

	However, $f_\chi$ is not guaranteed to have vanishing first order moments, so it might not belong to $\mathcal{O}$. To address this issue, we introduce a corrective function. Specifically, we select a radial function $\phi\in C_{\mathrm c}(\B^3_{r_{\mathrm s}})\setminus\{0\}$ with $\supp(\phi)\subset\B^3_r$ for some $r\in(0,\frac12r_{\mathrm s})$ and $\phi(x)\in[0,1]$ for all $x\in\B^3_{r_{\mathrm s}}$ and define
	\[\tilde f_\chi(x)\coloneqq f_\chi(x)-\sum_{j=1}^{3}a_j x_j \phi(x)\text{ with }a_j\coloneqq\frac{\int_{\B^3_{r_{\mathrm s}}}x_j f_\chi(x) \d x}{\int_{\B^3_{r_{\mathrm s}}}x_j^2 \phi(x) \d x}\text{ for all }j\in\{1,2,3\}.\]
Since the fact that $\phi$ is a radial function implies that
\[ \int_{\B^3_{r_{\mathrm s}}}x_j x_\ell \phi(x) \d x = 0\text{ for all }j,\ell\in\{1,2,3\} \text{ with }j\neq\ell, \]
we find that
	\[\int_{\B^3_{r_{\mathrm s}}}x_\ell \tilde f_\chi(x) \d x=\int_{\B^3_{r_{\mathrm s}}}x_\ell f_\chi(x) \d x-\sum_{j=1}^3a_j\int_{\B^3_{r_{\mathrm s}}}x_jx_\ell\phi(x)\d x = 0\text{ for all }\ell\in\{1,2,3\}\]
so that the function $\tilde f_\chi$ has, by construction, vanishing first-order moments, which shows that $\tilde f_\chi\in \mathcal{O}$.

Next, we remark that the error introduced by this correction can be estimated by using that the first moment of $f$ is zero by
\[ \norm{f_\chi-\tilde f_\chi}_{L^2} \leq \sum_{j=1}^{3}\abs{a_j}\left(\int_{\B^3_{r_{\mathrm s}}}x_j^2\phi^2(x)\d x\right)^{\frac12} \leq \sum_{j=1}^3 C_j\abs*{\int_{\B^3_{r_{\mathrm s}}}x_j(f_\chi(x)-f(x))\d x} \le C\norm{f_\chi-f}_{L^2}, \]
where we defined the constants
\begin{equation}\label{eq:constant_smooth_approx}
C_j\coloneqq\frac{\left(\int_{\B^3_{r_{\mathrm s}}}x_j^2\phi^2(x)\d x\right)^{\frac12}}{\int_{\B^3_{r_{\mathrm s}}}x_j^2 \phi(x) \d x}\text{ and }C\coloneqq \max\Bigg\{1,r_{\mathrm s}\abs{B_{r_{\mathrm s}}^3}^{\frac12}\sum_{j=1}^3C_j\Bigg\}.
\end{equation}
Using that we have the upper bound from \autoref{eq:ineq1} by our initial choice of $f_\chi$, we end up with
\begin{equation}\label{eq:ineq2}
\norm{f_\chi-\tilde f_\chi}_{L^2}\le\frac\delta3
\end{equation}
To finally ensure that the approximation for $f$ is smooth and retains compact support inside $\B^3_{r_{\mathrm s} - 2\varepsilon}$, we convolve $\tilde f_\chi$ with a radially symmetric mollifier $\varphi_{\tilde\delta}\in C_{\mathrm c}^\infty(\B^3_{r_{\mathrm s}})$ with $\supp(\varphi_{\tilde\delta})\subset \B^3_{\tilde\delta}$ and $\tilde\delta<\frac{\varepsilon}{2}$. The resulting function $g\coloneqq\tilde f_\chi\ast\varphi_{\tilde\delta}$ then satisfies $\supp(g) \subset \B^3_{r_{\mathrm s} - 3\varepsilon + \tilde{\delta}} \subset \B^3_{r_{\mathrm s} - 2\varepsilon}$.

Since the first order moments of $\varphi_{\tilde\delta}$ vanish because of the radial symmetry, we additionally get
	\[\int_{\R^3}x (\tilde f_\chi\ast \varphi_{\tilde\delta})(x)\d x=\int_{\R^3}x \tilde f_\chi(x)\d x \int_{\R^3}\varphi_{\tilde\delta}(x)\d x+\int_{\R^3} \tilde f_\chi(x)\d x \int_{\R^3}x \varphi_{\tilde\delta}(x)\d x=0\]
so that the convolution preserves the vanishing first-order moments of $\tilde f_\chi$.

	By the smoothing property of the convolution, we conclude that $g\in C_{\mathrm c}^\infty(\B^3_{r_{\mathrm s}-2\varepsilon})\cap\mathcal{O}$. And by choosing $\tilde\delta$ sufficiently small, we can further guarantee the error estimate
	\begin{equation}\label{eq:ineq3}
		\norm{\tilde f_\chi-g}_{L^2}\leq\frac{\delta}{3}.
	\end{equation}
	
	By combining the inequalities from \autoref{eq:ineq1}, \autoref{eq:ineq2}, and \autoref{eq:ineq3}, we thus obtain
	\[\norm{f-g}_{L^2}\leq\norm{f-f_\chi}_{L^2}+\norm{f_\chi-\tilde f_\chi}_{L^2}+\norm{\tilde f_\chi-g}_{L^2}\leq\frac{\delta}{3}+\frac{\delta}{3}+\frac{\delta}{3}=\delta.\]
\end{proof}

Later on, we want to construct an asymmetric point set with the maximum projection property in \autoref{dt_prop_maximum} of the proof of \autoref{thm:dt_S_nowheredense}.

\begin{lemma}\label{thm:dt_max_proj}
	Let $\varepsilon\in\big(0,\frac{r_{\mathrm s}}2(1-\cos(\frac\pi6))\big)$. Moreover, let $P\subset\partial \B^3_{r_{\mathrm s}-\varepsilon}$ be a DT-asymmetric point set fulfilling the properties from \autoref{thm:dt_asym_points_existence} and such that there exist for every $v\in S^2$ a points $p_1(v)\in P$ that lies in the spherical cap
			\[C_\varepsilon(v)\coloneqq \{x \in \partial \B^3_{r_{\mathrm s}-\varepsilon} \mid \inner{x}{v} > r_{\mathrm s}-2\varepsilon\}\]
of $\partial\B^3_{r_{\mathrm s}-\varepsilon}$ centered at $v$ with height $\varepsilon$.

	Then, for every choice of directions $\xi,\eta,\nu\in S^2$ with $\inner{\xi}{\eta}=0$, there exists a direction $u\in S^2\cap\spn\{\xi,\eta\}$ such that there is a point $p_1(u)\in P\cap C_\varepsilon(u)\setminus(\spn\{\xi,\eta\}\cup\spn\{\nu\})$ satisfying
		\begin{equation}\label{eq:dt_max_proj}
			\inner{p_1(u)}u>\inner pu\text{ for all }p\in P\setminus\{p_1(u)\}.
		\end{equation}
\end{lemma}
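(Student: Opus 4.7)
The plan is to parametrize the great circle $\Gamma \coloneqq S^2 \cap \spn\{\xi, \eta\}$ and to study how the argmax of $p \mapsto \inner{p}{u}$ over $P$ varies as $u$ moves along $\Gamma$. For $u \in \Gamma$, the value $\inner{p}{u}$ depends only on the orthogonal projection of $p$ onto $\spn\{\xi, \eta\}$, so I group the elements of $P$ into equivalence classes under $p \sim q \iff p - q \in \spn\{\xi \times \eta\}$. Each class has at most two elements since both representatives would have to lie on the sphere $\partial\B^3_{r_{\mathrm s} - \varepsilon}$ while sharing the same in-plane component. For every class $[p]$ I define
\[ U_{[p]} \coloneqq \{u \in \Gamma : \inner{p}{u} = \max_{q \in P}\inner{q}{u}\}. \]
These closed subsets cover $\Gamma$, and the interiors of $U_{[p]}$ and $U_{[q]}$ are disjoint when $[p] \neq [q]$ because the equation $\inner{p - q}{u} = 0$ then has at most two solutions on $\Gamma$.

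From the cap hypothesis I know that $\max_{q \in P}\inner{q}{u} > r_{\mathrm s} - 2\varepsilon$ at every $u \in \Gamma$, which forces
\[ U_{[p]} \subset \{u \in \Gamma : \inner{p}{u} > r_{\mathrm s} - 2\varepsilon\}. \]
Writing $\alpha \coloneqq \arccos\bigl((r_{\mathrm s} - 2\varepsilon)/(r_{\mathrm s} - \varepsilon)\bigr)$, the set on the right is either empty or an arc of angular length at most $2\alpha$. A short elementary computation verifies that the bound $\varepsilon < \frac{r_{\mathrm s}}{2}(1 - \cos(\pi/6))$ implies $\alpha < \pi/6$.

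I next classify the \emph{bad} classes that cannot play the role of $[p_1(u)]$: singletons with $p \in \spn\{\xi, \eta\}$, singletons with $p \in \spn\{\nu\}$, and size-two classes (in which two representatives are tied as maximizers, so no strict maximum exists). Condition (i) of \autoref{thm:dt_asym_points_existence} requires any three elements of $P$ to be linearly independent, which bounds the first type by two and the second type by one. For size-two classes, writing a pair as $p_i = p_j + \lambda(\xi \times \eta)$ and expanding gives
\[ p_i \times p_j = \lambda\bigl(\inner{p_i}{\xi}\,\eta - \inner{p_i}{\eta}\,\xi\bigr) \in \spn\{\xi, \eta\}; \]
three disjoint such pairs would yield three vectors inside a 2-plane, forcing a vanishing determinant that contradicts condition (ii) of \autoref{thm:dt_asym_points_existence}. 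Hence there are at most $2 + 1 + 2 = 5$ bad classes, and their $U_{[p]}$'s collectively occupy an angular length of at most $10\alpha < \frac{5\pi}{3} < 2\pi$ in $\Gamma$.

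Consequently the \emph{good} classes---singletons $\{p\}$ with $p \notin \spn\{\xi, \eta\} \cup \spn\{\nu\}$---have $U_{[p]}$'s of strictly positive total length, so at least one of them has nonempty interior in $\Gamma$. Picking $u$ in that interior yields a unique strict maximizer $p_1(u) \coloneqq p$ of $\inner{\cdot}{u}$ on $P$, hence $\inner{p_1(u)}{u} > \inner{q}{u}$ for every $q \in P \setminus \{p_1(u)\}$, and the strict bound $\inner{p_1(u)}{u} > r_{\mathrm s} - 2\varepsilon$ automatically places $p_1(u)$ in $C_\varepsilon(u)$. The main obstacle is the bound on the number of size-two classes, which is precisely where condition (ii) of \autoref{thm:dt_asym_points_existence} enters the argument; the remainder is bookkeeping on arc lengths via the choice of $\varepsilon$.
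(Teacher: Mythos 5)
Your proof is correct and follows essentially the same route as the paper's: both arguments work on the great circle $S^2\cap\spn\{\xi,\eta\}$, use condition \textit{(i)} of \autoref{thm:dt_asym_points_existence} to bound the points of $P$ lying in $\spn\{\xi,\eta\}$ and in $\spn\{\nu\}$, use condition \textit{(ii)} via the same coplanarity-of-cross-products observation to bound by two the number of pairs with equal projections onto $\spn\{\xi,\eta\}$, and conclude by counting arcs of angular length less than $\pi/3$ (from $\varepsilon<\frac{r_{\mathrm s}}2(1-\cos(\frac\pi6))$). Your packaging via the argmax cells $U_{[p]}$ is a tidy reformulation of the paper's direct exclusion of six arcs, but the substance is identical.
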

\begin{proof}
	Let $\xi,\eta,\nu\in S^2$ with $\inner{\xi}{\eta}=0$.
	
	According to \autoref{eq:DT_point_exist_cond1} in \autoref{thm:dt_asym_points_existence}, every three points in $P$ are linearly independent. Hence, there can at most two points in $P$ lying in $\spn\{\xi,\eta\}$. We avoid these points by excluding all $u \in S^2 \cap \spn\{\xi, \eta\}$ for which either of these points is contained in $C_\varepsilon(u)$. This prohibits for the choice of $u$ at most two arcs $A_1$ and $A_2$ each with a central angle $2\arccos(\frac{r_{\mathrm s}-2\varepsilon}{r_{\mathrm s}-\varepsilon})<\frac\pi3$ on the circle $S^2 \cap \spn\{\xi, \eta\}$.

Similarly, we exclude for $u$ the directions where we would have $\spn\{\nu\}\cap C_\varepsilon(u)\ne\emptyset$, which disallows at most two additional arcs $A_3$ and $A_4$ with central angles less than $\frac\pi3$.
	
	Assume now that we have for some $u\in S^2 \cap \spn\{\xi, \eta\}$ that $\inner{p_1}{u}= \inner{p_2}{u}$ for some points $p_1,p_2\in P\cap C_\varepsilon(u)$, meaning that \autoref{eq:dt_max_proj} is potentially violated. Furthermore, suppose that these points also fulfill the equality $\inner{p_1}{\tilde u}= \inner{p_2}{\tilde u}$ for every sufficiently small perturbation $\tilde u=\bm{R}_{\xi\times\eta}(\delta)u$ of $u$, where $\bm{R}_{\xi\times\eta}(\delta)\in \SO(3)$ represents the rotation about the vector $\xi\times\eta\in S^2$ by the angle $\delta\in\R$ which shall be chosen such that $p_1,p_2\in P\cap C_\varepsilon(\tilde u)$.
This means that this potential violation of \autoref{eq:dt_max_proj} cannot be resolved by minimally altering the direction.
	
Writing such points $p_1$ and $p_2$ as $p_j=\pi_{\xi\times\eta}(p_j)+\alpha_j\xi\times\eta$ with $\alpha_j\in\R$, $j\in\{1,2\}$, with the orthogonal projection $\pi_{\xi\times\eta}$ onto $\spn\{\xi,\eta\}$ as in \autoref{eq:det_orth_proj}, we get from $u,\tilde u\in\spn\{\xi,\eta\}$ that
\[ \inner{\pi_{\xi\times\eta}(p_1)-\pi_{\xi\times\eta}(p_2)}{u}=\inner{p_1-p_2}u = 0 \text{ and }\inner{\pi_{\xi\times\eta}(p_1)-\pi_{\xi\times\eta}(p_2)}{\tilde u}=\inner{p_1-p_2}{\tilde u}=0, \]
which directly implies that $\pi_{\xi\times\eta}(p_1)=\pi_{\xi\times\eta}(p_2)$.

	However, this can only happen for at most two pairs of points in $P$, since if we had three pairs $(p_{1,k},p_{2,k})_{k=1}^3$ with $\pi_{\xi\times\eta}(p_{1,k})=\pi_{\xi\times\eta}(p_{2,k})$ for all $k\in\{1,2,3\}$, the three planes $\spn\{p_{1,k},p_{2,k}\}$, $k\in\{1,2,3\}$, would intersect in the line with direction $\xi\times\eta$ and it would thus hold that $\det(p_{1,1}\times p_{2,1},p_{1,2}\times p_{2,2},p_{1,3}\times p_{2,3})=0$, contradicting \autoref{eq:DT_point_exist_cond2} of \autoref{thm:dt_asym_points_existence}.
	
	Excluding the directions $u\in S^2 \cap \spn\{\xi, \eta\}$ for which such a pair of points is contained in $C_\varepsilon(u)$, which by our choice of $\varepsilon$ are again two circular arcs $A_5,A_6\subset S^2 \cap \spn\{\xi, \eta\}$ with central angles less than $\frac\pi3$, we find in every neighborhood of every direction in the remaining set $S^2 \cap \spn\{\xi, \eta\}\setminus(\bigcup_{\ell=1}^6\overline{A_\ell})$ a direction $u\in S^2 \cap \spn\{\xi, \eta\}$ such that there exists a point $p_1(u)\in P\cap C_\varepsilon(u)\setminus(\spn\{\xi,\eta\}\cup\spn\{\nu\})$ fulfilling \autoref{eq:dt_max_proj}.
\end{proof}

For the final argument in the proof of \autoref{thm:dt_S_nowheredense}, we use some properties of a holomorphic extension of the function $H\colon[-k_0,k_0]\to\C^3$, $H(\mu)\coloneqq \mu\xi+h(\mu)\eta$, with $h$ being defined in \autoref{eq:dt_def_h} for some orthogonal unit vectors $\xi,\eta\in S^2$.

\begin{lemma}\label{thm:dt_prop_H}
	Let $\xi,\eta\in S^2$ be orthogonal unit vectors and $\Gamma\coloneqq(-\infty,-k_0)\cup(k_0,\infty)$. Then the function 
	\begin{equation}\label{eq:dt_def_H_polar}
			H\colon\C\setminus\Gamma\to\C^3,\;H(z)\coloneqq z\xi+\left(\sqrt{k_0^2-z^2}-k_0\right)\eta,
	\end{equation}
	fulfills for every $z=r\e^{\i\varphi}\in\C\setminus\Gamma$, $r\in[0,\infty)$, $\varphi\in(-\pi,\pi)\setminus\{0\}$, that
	\begin{enumerate}
	\item \label{item:dt_prop_H1}
	$\displaystyle\norm{H(z)}\geq\abs z$ and $\norm{\Im(H(z))}\geq \abs{\Im(z)}$,
	\item \label{item:dt_prop_H4}
	$\displaystyle\lim_{z\to\infty}\frac1z\sgn(\Im(z))\sum_{j=1}^3H_j^2(z)=2\i k_0$, and
	\item \label{item:dt_prop_H2}
	$\displaystyle\Im(H(r\e^{\i\varphi}))=r\sin(\varphi)\xi-\sgn(\sin(2\varphi))\sqrt{\frac{1}{2}\left(\sqrt{k_0^4-2 k_0^2r^2\cos(2 \varphi)+r^4}-k_0^2+r^2\cos(2\varphi)\right)}\,\eta$.
	\end{enumerate}
\end{lemma}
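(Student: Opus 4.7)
The plan is to reduce each of the three claims to a scalar computation by exploiting that $\xi,\eta\in\R^3$ are real orthonormal vectors, so that the cross terms between the $\xi$- and $\eta$-components vanish upon expansion.

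For \autoref{item:dt_prop_H1}, I would expand $\norm{H(z)}^2$ componentwise; using $\sum_j\xi_j^2=\sum_j\eta_j^2=1$ and $\sum_j\xi_j\eta_j=0$, the mixed terms cancel and one obtains $\norm{H(z)}^2=\abs{z}^2+\abs{\sqrt{k_0^2-z^2}-k_0}^2\geq\abs{z}^2$. Since $\xi$ and $\eta$ are real, $\Im(H(z))=\Im(z)\xi+\Im(\sqrt{k_0^2-z^2})\eta$, and the same orthonormality argument gives $\norm{\Im(H(z))}^2=\Im(z)^2+\Im(\sqrt{k_0^2-z^2})^2\geq\Im(z)^2$.

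For \autoref{item:dt_prop_H4}, the unconjugated bilinear sum collapses by the same orthonormality identities to $\sum_{j=1}^3H_j^2(z)=z^2+(\sqrt{k_0^2-z^2}-k_0)^2=2k_0(k_0-\sqrt{k_0^2-z^2})$. The claim therefore reduces to showing $\sqrt{k_0^2-z^2}/z\to-\i\sgn(\Im(z))$ as $\abs{z}\to\infty$. I would factor $k_0^2-z^2=-z^2(1-k_0^2/z^2)$ and argue, using the principal branch, that for $z=r\e^{\i\varphi}$ with $\varphi\in(-\pi,\pi)\setminus\{0\}$ the argument of $-z^2$ lies in $(-\pi,\pi)$ and equals $2\varphi-\pi\sgn(\varphi)$, so that $\sqrt{-z^2}/z=\e^{-\i\frac{\pi}{2}\sgn(\varphi)}=-\i\sgn(\Im(z))$; combined with $\sqrt{1-k_0^2/z^2}\to1$ and $2k_0^2/z\to0$, this yields \autoref{item:dt_prop_H4} after multiplication by $\sgn(\Im(z))$.

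For \autoref{item:dt_prop_H2}, I would once more invoke the real-valuedness of $\xi$ and $\eta$ to write $\Im(H(r\e^{\i\varphi}))=r\sin(\varphi)\xi+\Im(\sqrt{k_0^2-r^2\e^{2\i\varphi}})\eta$ and then apply the standard identity $\Im(\sqrt{w})=\sgn(\Im(w))\sqrt{(\abs{w}-\Re(w))/2}$ for the principal square root to $w=k_0^2-r^2\e^{2\i\varphi}$. A short computation yields $\Re(w)=k_0^2-r^2\cos(2\varphi)$, $\Im(w)=-r^2\sin(2\varphi)$, and $\abs{w}=\sqrt{k_0^4-2k_0^2r^2\cos(2\varphi)+r^4}$; substituting these and noting $\sgn(\Im(w))=-\sgn(\sin(2\varphi))$ produces exactly the claimed expression. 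The main obstacle is the branch-cut bookkeeping in \autoref{item:dt_prop_H4}: once the principal branch of $\sqrt{\,\cdot\,}$ is verified to produce the factor $-\i\sgn(\Im(z))$ with the correct sign in both half-planes, the remaining parts reduce to straightforward algebra.
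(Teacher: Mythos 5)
Your proposal is correct and follows essentially the same route as the paper's proof: orthonormality of $\xi,\eta$ reduces \autoref{item:dt_prop_H1} and \autoref{item:dt_prop_H4} to the scalar identities $\norm{H(z)}^2=\abs z^2+\abs{\sqrt{k_0^2-z^2}-k_0}^2$ and $\sum_jH_j^2(z)=2k_0^2-2k_0\sqrt{k_0^2-z^2}$, and \autoref{item:dt_prop_H2} follows from the standard formula for the imaginary part of a principal square root (the paper cites \cite[Section 3.7.27]{AbrSte72} for exactly this). Your branch-cut bookkeeping for $\sqrt{-z^2}/z=-\i\sgn(\Im(z))$ is in fact spelled out in more detail than in the paper, which asserts this limit directly.
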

\begin{proof}\mbox{}\\[-3ex]
\begin{enumerate}
\item
Since $\xi$ and $\eta$ are orthonormal, we get
\begin{align*}
&\norm{H(z)}^2 = \abs{z}^2+\abs*{\sqrt{k_0^2-z^2}-k_0}^2 \ge \abs z^2\text{ and} \\
&\norm{\Im(H(z))}^2 = (\Im(z))^2+\left(\Im\left(\sqrt{k_0^2-z^2}-k_0\right)\right)^2 \ge (\Im(z))^2.
\end{align*}

\item
Again, the orthonormality of $\xi$ and $\eta$ gives us
\begin{align*}
\lim_{z\to\infty}\frac1z\sgn(\Im(z))\sum_{j=1}^3H_j^2(z) &= \lim_{z\to\infty}\frac1z\sgn(\Im(z))\left(2k_0^2-2k_0\sqrt{k_0^2-z^2}\right) \\
&= -2k_0\lim_{z\to\infty}\sgn(\Im(z))\frac{\sqrt{-z^2}}z = 2\i k_0.
\end{align*}

\item
Applying the formula for the imaginary part of the square root of a complex number, as given in \cite[Section 3.7.27]{AbrSte72}, to the term $\sqrt{k_0^2-r^2 \e^{2\i\varphi}}$ in \autoref{eq:dt_def_H_polar}, we directly obtain the predicted expression for $\Im(H(r\e^{\i\varphi}))$.
\end{enumerate}
\end{proof}

At last, we are interested in the behavior of the normalized vector $z\mapsto\frac{\Im(H(z))}{\norm{\Im(H(z))}}$ in the direction of the imaginary part of $H$.

\begin{lemma}\label{thm:dt_prop_HIm}
Let $\xi,\eta\in S^2$ be orthogonal unit vectors and the function $H$ be given by \autoref{eq:dt_def_H_polar}.

We then have for every $r\in(k_0,\infty)$ that the function
\[ H_r\colon(-\pi,\pi)\setminus\{0\}\to S^2\cap\spn\{\xi,\eta\}\setminus\{-\eta,\eta\},\;H_r(\varphi)\coloneqq\frac{\Im(H(r\e^{\i\varphi}))}{\norm{\Im(H(r\e^{\i\varphi)}))}}, \]
is well-defined and surjective.

Moreover, we have the asymptotic behavior
\begin{equation}\label{eq:dt_prop_HIm_asymptotics}
\lim_{r\to\infty}\sup_{\varphi\in(-\pi,\pi)\setminus\{0\}}\norm{H_r(\varphi)-\sin(\varphi)\xi-\sgn(\varphi)\cos(\varphi)\eta} = 0.
\end{equation}
\end{lemma}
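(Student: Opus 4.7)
The plan is to read off all three claims directly from the explicit formula for $\Im(H(r\e^{\i\varphi}))$ given by part (iii) of \autoref{thm:dt_prop_H}, which expresses this imaginary part as a real linear combination of $\xi$ and $\eta$ with scalar coefficients depending only on $r$ and $\varphi$.

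For well-definedness, I observe that the $\xi$-component $r\sin(\varphi)$ is nonzero on $(-\pi,\pi)\setminus\{0\}$, which gives two things at once: the vector $\Im(H(r\e^{\i\varphi}))$ does not vanish (so the normalization is legitimate), and its normalization retains a nonzero $\xi$-component (so it lies outside $\{\pm\eta\}$). Since the vector itself lies in $\spn\{\xi,\eta\}$, the image of $H_r$ is in the claimed codomain. For surjectivity, I split $(-\pi,\pi)\setminus\{0\}$ into $(0,\pi)$ and $(-\pi,0)$, on which the $\xi$-component has constant sign. Reading off the boundary values from the formula, $H_r(\pi/2)=\xi$ (since both $\cos(\pi/2)$ and $\sin(\pi)$ vanish), while the limits as $\varphi\to 0^+$ and $\varphi\to\pi^-$ give $\mp\eta$ (distinguished by the opposite signs of $\sin(2\varphi)$ near the two endpoints). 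By continuity of $H_r$ together with the intermediate value theorem applied to the angular parametrization of $S^1\cap\spn\{\xi,\eta\}$, the image on $(0,\pi)$ is the open half-circle with positive $\xi$-component; a symmetric argument handles $(-\pi,0)$, together covering $S^2\cap\spn\{\xi,\eta\}\setminus\{\pm\eta\}$.

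For the uniform asymptotic, I expand the formula in $1/r$. A first-order Taylor expansion yields, uniformly in $\varphi$,
\[
\sqrt{k_0^4 - 2k_0^2 r^2\cos(2\varphi) + r^4} = r^2 - k_0^2\cos(2\varphi) + O(r^{-2}),
\]
which simplifies the expression under the outer square root to $(r^2-k_0^2)\cos^2(\varphi)+O(r^{-2})$. The elementary inequality $\abs{\sqrt u-\sqrt v}\le\sqrt{\abs{u-v}}$ is then the key tool for transferring the expansion through the outer square root uniformly in $\varphi$, giving the $\eta$-component magnitude as $\sqrt{r^2-k_0^2}\,\abs{\cos(\varphi)}+O(r^{-1})$ and similarly $\norm{\Im(H(r\e^{\i\varphi}))}=r+O(r^{-1})$. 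Dividing, the $\xi$-component of $H_r(\varphi)$ converges uniformly to $\sin(\varphi)$; for the $\eta$-component, applying $\sgn(\sin(2\varphi))=\sgn(\sin\varphi)\sgn(\cos\varphi)$ together with $\sgn(\sin\varphi)=\sgn(\varphi)$ on $(-\pi,\pi)\setminus\{0\}$ collapses the leading term to $\sgn(\varphi)\cos(\varphi)$.

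The main technical obstacle will be establishing uniformity of the convergence near $\varphi=\pm\pi/2$, where the leading term of the $\eta$-component vanishes and the $O(r^{-2})$ error could otherwise dominate after the square-root step; the $\sqrt{\abs{u-v}}$ bound is precisely what absorbs this error into a uniform $O(r^{-1})$ correction across the whole interval, avoiding any regime-splitting.
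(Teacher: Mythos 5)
Your proposal is correct and follows essentially the same route as the paper: read off the $\xi$- and $\eta$-components of $\Im(H(r\e^{\i\varphi}))$ from the explicit formula, get well-definedness from $r\sin(\varphi)\ne0$, get surjectivity from continuity plus the boundary limits on $(0,\pi)$ and $(-\pi,0)$, and get the uniform asymptotics from $\ell(r,\varphi)/r\to\abs{\cos(\varphi)}$ uniformly (your Taylor expansion plus $\abs{\sqrt u-\sqrt v}\le\sqrt{\abs{u-v}}$ is just a more explicit version of the paper's one-line use of $\tfrac12(1+\cos(2\varphi))=\cos^2(\varphi)$). The one detail you should make explicit, which the paper does, is that the $\eta$-component is continuous across $\varphi=\pm\pi/2$ despite the jump of $\sgn(\sin(2\varphi))$ there, because the factor $\ell(r,\pm\pi/2)$ multiplying it vanishes exactly --- this is also the exact (not merely asymptotic) reason that $H_r(\pi/2)=\xi$.
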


\begin{proof}
According to \autoref{item:dt_prop_H2} of \autoref{thm:dt_prop_H}, we can write for every $r>k_0$ and $\varphi\in(-\pi,\pi)\setminus\{0\}$
\[ H_r(\varphi)=a_1(r,\varphi)\xi-a_2(r,\varphi)\eta \]
with the functions
\begin{align*}
&\ell(r,\varphi)\coloneqq\sqrt{\frac{1}{2}\left(\sqrt{k_0^4-2 k_0^2r^2\cos(2 \varphi)+r^4}-k_0^2+r^2\cos(2\varphi)\right)},\\
&a_1(r,\varphi)\coloneqq\frac{r\sin(\varphi)}{\sqrt{r^2\sin^2(\varphi)+\ell^2(r,\varphi)}},\;\text{and}\\
&a_2(r,\varphi)\coloneqq\sgn(\sin(2\varphi))\frac{\ell(r,\varphi)}{\sqrt{r^2\sin^2(\varphi)+\ell^2(r,\varphi)}}.
\end{align*}
In particular, we see that $H_r(\varphi)\in\spn\{\xi,\eta\}$, and, by definition, we also have $\norm{H_r(\varphi)}=1$, so that it consequently holds that $H_r$ indeed maps into $S^2\cap\spn\{\xi,\eta\}$.

Moreover, for every $r\in(k_0,\infty)$, the functions $\varphi\mapsto a_j(r,\varphi)$, $j\in\{1,2\}$, are well-defined and continuous with respect to $\varphi$ on $(-\pi,0)$ and $(0,\pi)$. The continuity of $a_1$ is straightforward and the only potential discontinuity of $a_2$ would be at $\varphi\in\{-\frac{\pi}{2},\frac\pi2\}$, where $\varphi\mapsto\sgn(\sin(2\varphi))$ has a jump discontinuity. However, since $\ell(r, \frac{\pi}{2}) = \ell(r,-\frac\pi2) = 0$, there is no jump and $a_2$ is continuous on both intervals.

Since we have $a_1(r,-\varphi)<0$ and $a_1(r,\varphi)>0$ for all $\varphi\in(0,\pi)$, it only remains to verify that $\varphi\mapsto a_2(r,\varphi)$ maps $(-\pi,0)$ and $(0,\pi)$ surjectively onto $(-1,1)$ to prove that $H_r(\varphi)$ maps for every $r\in(k_0,\infty)$ onto $S^2\cap\spn\{\xi,\eta\}\setminus\{-\eta,\eta\}$. It is thus enough to remark that we have
\begin{alignat*}{2}
&\lim_{\varphi\downarrow-\pi}a_2(r,\varphi)=1&&\text{ and }\lim_{\varphi\uparrow0}a_2(r,\varphi)=-1\text{ as well as} \\
&\lim_{\varphi\downarrow0}a_2(r,\varphi)=1&&\text{ and }\lim_{\varphi\uparrow\pi}a_2(r,\varphi)=-1.
\end{alignat*}
The restriction $r > k_0$ is necessary since for $r \le k_0 , \ell(r,\varphi) \to 0$ as $\varphi \to 0$, causing $|a_2|$ to potentially not approach 1.

Observing further that we get from the trigonometric identity $\frac12(1+\cos(2\varphi))=\cos^2(\varphi)$ that
\[ \lim_{r\to\infty}\sup_{\varphi\in(-\pi,\pi)\setminus\{0\}}\abs*{\frac{\ell(r,\varphi)}r-\abs{\cos(\varphi)}} = 0, \]
we find that
\begin{align*}
&\lim_{r\to\infty}\sup_{\varphi\in(-\pi,\pi)\setminus\{0\}}\abs*{a_1(r,\varphi)-\sin(\varphi)} = 0\text{ and} \\
&\lim_{r\to\infty}\sup_{\varphi\in(-\pi,\pi)\setminus\{0\}}\abs*{a_2(r,\varphi)-\sgn(\varphi)\cos(\varphi)} = \lim_{r\to\infty}\sup_{\varphi\in(-\pi,\pi)\setminus\{0\}}\abs[\big]{a_2(r,\varphi)-\sgn(\sin(2\varphi))\abs{\cos(\varphi)}} = 0,
\end{align*}
which proves \autoref{eq:dt_prop_HIm_asymptotics}.
\end{proof}

\subsection*{Acknowledgments}
This research was funded in whole or in part by the Austrian Science Fund (FWF)
SFB 10.55776/F68 ``Tomography Across the Scales'', project F6804-N36
(Quantitative Coupled Physics Imaging). For open access purposes, the authors have
applied a CC BY public copyright license to any author-accepted manuscript
version arising from this submission.

%'
%' references
%'
\section*{References}
\renewcommand{\i}{\ii}
\printbibliography[heading=none]
%;

\end{document}